\newtheorem*{rep@theorem}{\rep@title}
\newcommand{\newreptheorem}[2]{%
\newenvironment{rep#1}[1]{%
 \def\rep@title{#2 \ref{##1}}%
 \begin{rep@theorem}}%
 {\end{rep@theorem}}}
\newtheorem{thm}{Theorem}[section]
\newtheorem{prop}[thm]{Proposition}
\newtheorem{lem}[thm]{Lemma}
\newtheorem{cor}[thm]{Corollary}
\newtheorem{conj}[thm]{Conjecture} 
\newtheorem{quest}[thm]{Question} 
\theoremstyle{definition}
\theoremstyle{remark}
\newtheorem{remark}[thm]{Remark}
\numberwithin{equation}{section}
\newcommand{\Q}{\mathbb{Q}}  
\newcommand{\Z}{\mathbb{Z}}  
\newcommand{\bsig}{\mathbf{\sigma}}  
\begin{document}

\title{Residual Torsion-Free Nilpotence, Bi-Orderability and Pretzel Knots}

\author{Jonathan Johnson}
\address{Department of Mathematics, University of Texas at Austin, 
Austin, TX}
\email{jonjohnson@utexas.edu}

\begin{abstract}
The residual torsion-free nilpotence of the commutator subgroup of a knot group has played a key role in studying the bi-orderability of knot groups \cite{PerRolf03,LRR08,CDN16,John20a}.
A technique developed by Mayland \cite{May75} provides a sufficient condition for the commutator subgroup of a knot group to be residually-torsion-free nilpotent using work of Baumslag \cite{Baum67, Baum69}.
In this paper, we apply Mayland's technique to several genus one pretzel knots
and a family of pretzel knots with arbitrarily high genus.
As a result, we obtain a large number of new examples of knots with bi-orderable knot groups.
These are the first examples of bi-orderable knot groups for knots which are not fibered or alternating.
\end{abstract}

\maketitle


\section{Introduction}

Let $J$ be a knot in $S^3$.
The \emph{knot exterior} of $J$ is $M_J:=S^3-\nu(J)$ where $\nu(J)$ is the interior of a tubular neighborhood of $J$,
and the \emph{knot group} of $J$ is $\pi_1(M_J)$.
Denote the Alexander polynomial of $J$ by $\Delta_J$.

A group $\Gamma$ is \emph{nilpotent} if it's lower central series terminates (is trivial) after finitely many steps.
In other words, for some non-negative integer $n$,
\[
    \Gamma_0 \rhd \Gamma_1 \rhd \cdots \rhd \Gamma_n = 1 
\]
where $\Gamma_0=\Gamma$ and $\Gamma_{i+1}=[\Gamma_{i},\Gamma]$ for each $i=0,\ldots n-1$.
A group $\Gamma$ is \emph{residually torsion-free nilpotent} if for every nontrivial element $x\in\Gamma$, there is a normal subgroup $N\lhd\Gamma$ such that $x\notin N$ and $G/N$ is a torsion-free nilpotent group.
This paper concerned with when the commutator subgroup of a knot's group is residually torsion-free nilpotent, which has applications to ribbon concordance \cite{Gor81} and the bi-orderability of the knot's group \cite{LRR08}.

Several knots are known to have groups with residually torsion-free nilpotent commutator subgroups.
The commutator subgroup of fibered knot groups are finitely generated free groups, which are residually torsion-free nilpotent \cite{Mag35}.
Work of Mayland and Murasugi \cite{MayMur76} shows that the knot groups of pseudo-alternating knots, whose Alexander polynomials have a prime power leading coefficient,
have residually torsion-free nilpotent commutator subgroups;
pseudo-alternating knots are defined in Section \ref{secgenusone}.
The knot groups of two-bridge knots have residually torsion-free nilpotent commutator subgroups \cite{John20a}.

There is also the following obstruction to a knot's group having residually torsion-free nilpotent commutator subgroup.

\begin{prop} \label{trivial}
If $J$ is a knot in $S^3$ with trivial Alexander polynomial, then the commutator subgroup of $\pi_1(M_J)$ cannot be residually torsion-free nilpotent.
\end{prop}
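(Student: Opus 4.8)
The plan is to deduce from the hypothesis $\Delta_J = 1$ that the commutator subgroup $G' := [\pi_1(M_J),\pi_1(M_J)]$ is \emph{perfect}, and then to rule out residual torsion-free nilpotence for nontrivial perfect groups. First I would identify the abelianization $G'/G''$ of the commutator subgroup with the Alexander module $\mathcal{A} = H_1(\widetilde{M}_J)$ of the infinite cyclic cover, regarded as a module over $\Lambda = \Z[t,t^{-1}]$ with $t$ acting as the meridional deck transformation. The crucial knot-theoretic input is that, for a knot, $\mathcal{A}$ admits a \emph{square} presentation matrix over $\Lambda$ --- for instance $V - t V^{T}$ associated to a Seifert matrix $V$ --- whose determinant equals $\Delta_J$ up to a unit $\pm t^{k}$.

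Next I would feed in the hypothesis $\Delta_J = 1$. The determinant of this square presentation matrix is then a unit of $\Lambda$, so the matrix is invertible over $\Lambda$ and $\mathcal{A} = 0$; equivalently, the zeroth Fitting ideal of $\mathcal{A}$ is all of $\Lambda$, which forces $\mathcal{A} = 0$. Hence $G'/G'' = 0$, i.e.\ $G'$ is perfect. I expect this to be the main obstacle, and it is the step that genuinely uses that $J$ is a knot rather than a link: because $\Lambda$ is \emph{not} a principal ideal domain, a module whose order is $1$ need not vanish in general, and it is precisely the existence of a square presentation matrix that rescues the argument.

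Finally I would argue purely group-theoretically. For $J$ nontrivial, $\pi_1(M_J) \not\cong \Z$ by the loop theorem and Dehn's lemma, so $G' \neq 1$ (the unknot, with $G' = 1$, is the degenerate case in which the statement is understood to be vacuous). Suppose for contradiction that $G'$ were residually torsion-free nilpotent, and choose a nontrivial $x \in G'$. Then there is a normal subgroup $N \lhd G'$ with $x \notin N$ and $Q := G'/N$ torsion-free nilpotent; in particular $Q \neq 1$. But $Q$ is a quotient of the perfect group $G'$ and is therefore itself perfect, i.e.\ $[Q,Q] = Q$, whereas a nontrivial nilpotent group always satisfies $[Q,Q] \subsetneq Q$ since its lower central series strictly descends. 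This contradiction completes the argument.
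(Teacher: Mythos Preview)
Your proof is correct and follows essentially the same approach as the paper: both arguments show that $\Delta_J=1$ forces the Alexander module $H_1(\widetilde{M}_J)\cong G'/G''$ to vanish, so that $G'$ is perfect, and then observe that a nontrivial perfect group admits no nontrivial nilpotent quotient. Your treatment is in fact slightly more careful---you use the invertibility of the square Seifert presentation matrix directly (rather than invoking a cyclic decomposition over the non-PID $\Lambda$), and you explicitly address the degenerate unknot case where $G'=1$, which the paper's statement tacitly excludes.
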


\begin{proof}
Let $G$ be the commutator subgroup of $\pi_1(M_J)$.
Let $M^{\infty}$ be the infinite cyclic cover of $M_J$,
the covering space of $M_J$ corresponding to $G$ so that  $\pi_1(M^{\infty})=G$;
see \cite[Chapter 7]{Rolf76} for details.
\[
H_1(M^{\infty},\Z)\cong \bigoplus_{i=1}^n \Z[t,t^{-1}]/\langle a_i(t)\rangle
\]
where $a_1(t),\ldots,a_n(t)$ are polynomials such that
\[
\prod_{i=1}^n a_i(t)=\Delta_J(t).
\]

Since the Alexander polynomial of $J$ is trivial, $G/[G,G]\cong H_1(M^{\infty},\Z)=1$
so $G=[G,G]$.
It follows that every term of the lower central series of $G$ is isomorphic to $G$.
Suppose $N\lhd G$ is a proper normal subgroup of $G$.
For each term of the lower central series of $G/N$,
\[
    (G/N)_i\cong G_i/N\cong G/N\neq 1
\]
so $G/N$ cannot be nilpotent.
Thus, $G$ is not residually torsion-free nilpotent.
\end{proof}

\begin{figure}[t]
\includegraphics[scale=1.0]{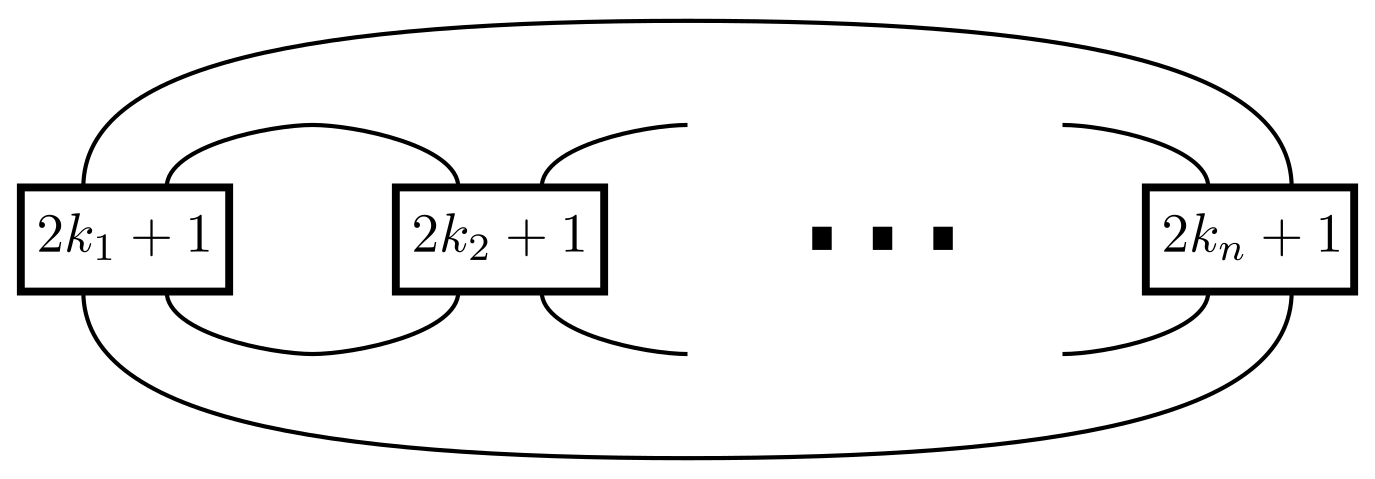}
\caption{A Pretzel Knot Diagram: The integers in the boxes indicate number of right-hand half-twist when positive and left-hand half-twist when negative.}
\label{figpretzel}
\end{figure}

Given the integers $k_1, k_2,\ldots,k_n$, define $P(k_1,k_2,\ldots,k_n)$ to be the \emph{pretzel knot} represented in the diagram in Figure \ref{figpretzel}.
Mayland \cite{May75} describes a technique to examine the commutator subgroup of the group of a knot bounding an unknotted minimal genus Seifert surface; see section \ref{secmayland}.
In fact, this is the technique Mayland and Murasugi used to prove their result for pseudo-alternating knots \cite{MayMur76}.
Applying Seifert's algorithm to the diagram in Figure \ref{figpretzel} yields an unknotted minimal genus Seifert surface \cite{Gab86} making pretzel knots ideal candidates for Mayland's technique.

Let $J$ be the $P(2p+1,2q+1,2r+1)$ pretzel knot for some integers $p$, $q$, and $r$.
$J$ is a two-bridge knot (possibly trivial) precisely when at least one of $p$, $q$, and $r$ is equal to 0 or -1 \cite[Chapter 2]{Kaw96}
so for our purposes, we can assume that none of $p$, $q$, and $r$ are 0 or -1.
Permuting the parameters $2p+1$, $2q+1$ and $2r+1$ yields the same (unoriented) knot.
Also, $P(-2p-1,-2q-1,-2r-1)$ and $P(2p+1,2q+1,2r+1)$ are mirrors of each other.
Since $\pi_1(M_J)$ is invariant of reversing orientation and mirroring,
we can assume that $1\leq q\leq r$.

\begin{thm} \label{mainthm}
Given integers $p$, $q$, and $r$ with $1\leq q \leq r$ and $p\neq0$ or $-1$, let $J$ be the $P(2p+1,2q+1,2r+1)$ pretzel knot with Alexander polynomial $\Delta_J$ whose leading coefficient is a prime power.
The commutator subgroup of $\pi_1(M_J)$ is residually torsion-free nilpotent if
\begin{itemize}
\item $p\geq 1$,
\item $J$ is $P(2p+1,3,2r+1)$,
\item $J$ is $P(-3,2q+1,2r+1)$ and $J$ is not $P(-3,5,5)$, $P(-3,5,7)$, $P(-3,5,9)$, $P(-3,5,11)$ or $P(-3,7,7)$, or
\item $J$ is $P(-5,2q+1,2r+1)$ and $J$ is  not
\begin{itemize}
\item $P(-5,7,R)$ when $R$ is 11, 13, 15, 17, 19, 21, 23 or 25,
\item $P(-5,9,R)$ when $R$ is 9, 11, 13, 15 or 17, or
\item $P(-5,11,R)$ when $R$ is 11 or 13.
\end{itemize}
\end{itemize}
\end{thm}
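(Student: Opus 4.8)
The plan is to apply Mayland's technique from Section~\ref{secmayland}, so the first task is to put the commutator subgroup into the shape to which Baumslag's criterion applies. Since $J=P(2p+1,2q+1,2r+1)$ is a three-strand pretzel, Seifert's algorithm on the diagram of Figure~\ref{figpretzel} produces a once-punctured torus $F$, which is of minimal genus and unknotted by \cite{Gab86}. Cutting $M_J$ along $F$ yields a handlebody $H$ of genus two, so that $\pi_1 H\cong F_2$ and $\pi_1 F\cong F_2$ are free of rank two, and $\pi_1(M_J)$ is the HNN extension of $\pi_1 H$ with stable letter $t$ and associated monomorphisms $i_+,i_-\colon\pi_1 F\to\pi_1 H$ given by pushing $F$ off to its two sides. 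By Bass--Serre theory the commutator subgroup $G$ is then the doubly infinite amalgam $\cdots *_{\pi_1 F}\pi_1 H *_{\pi_1 F}\pi_1 H *_{\pi_1 F}\cdots$, glued by $i_+$ on one side and $i_-$ on the other. I would record the $2\times 2$ Seifert matrix $V$ of $F$ explicitly as a function of $p,q,r$; the two inclusions induce $V$ and $V^{\mathsf T}$ on first homology, and $\Delta_J(t)=\det(tV-V^{\mathsf T})$ up to a unit, so that the leading coefficient of $\Delta_J$ is exactly $\det V$.

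With this structure fixed, the plan is to invoke Mayland's criterion, which reduces residual torsion-free nilpotence of $G$ to a condition on the pair $(V,V^{\mathsf T})$ at the primes dividing $\det V$. Because the leading coefficient $\det V$ equals a prime power $p^{k}$ by hypothesis, $V$ is invertible over the localization $\Z_{(q)}$ for every prime $q\neq p$; at those primes both gluing maps are isomorphisms, so the amalgam behaves like a fibered (free-by-cyclic) group and the corresponding lower central quotients are automatically $q$-torsion-free. The point is that the whole problem collapses onto the single prime $p$, where I would verify Baumslag's refined hypothesis: concretely, a rank/congruence condition on $V\bmod p$, equivalently a condition on the reduction modulo $p$ of the roots of $\Delta_J$. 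This is the sense in which the prime-power hypothesis is precisely what makes the technique applicable.

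It then remains to check this mod-$p$ condition for each of the four families, and here I would split along the bullets of the statement. When $p\ge 1$ all three parameters are odd and at least $3$, so $J$ is alternating and in particular pseudo-alternating; this case should follow from the Mayland--Murasugi theorem \cite{MayMur76} once the prime-power hypothesis is in force, and I would treat it as a warm-up that also calibrates the sign conventions for $V$. The families $P(2p+1,3,2r+1)$, $P(-3,2q+1,2r+1)$ and $P(-5,2q+1,2r+1)$ are the genuinely new, non-alternating cases: for each I would substitute the explicit $V$ into the criterion, read off $\Delta_J$ and $\det V=p^{k}$, and verify the condition at $p$. The fixed small entries ($3$, $-3$, $-5$) are what keep $\det V$ and $\Delta_J$ simple enough that the criterion becomes a checkable statement in the two remaining parameters.

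The hard part will be the $P(-3,\ast,\ast)$ and $P(-5,\ast,\ast)$ families, where the finite list of exceptions has to be accounted for. In these families the prime $p$ dividing the leading coefficient is not fixed but varies with $q$ and $r$, so one cannot simply reduce modulo a single prime; instead I expect to bound the relevant quantity and show that Baumslag's condition holds for all sufficiently large $q,r$, and then to dispatch the finitely many small pairs by direct computation. The content of the exceptional list $P(-3,5,5),\dots,P(-5,11,13)$ should be exactly the pairs $(q,r)$ for which the condition fails, so the real work is twofold: proving the condition holds off the list, and confirming that it genuinely fails---rather than merely being unverifiable by this method---on the list. Pinning down the exceptional set precisely, with no omissions in either direction, is the crux of the argument.
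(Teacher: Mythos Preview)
Your overall architecture matches the paper: the commutator subgroup is the infinite amalgam built from $X=\pi_1(M_S)$ and the two embeddings $i^\pm_*$, the prime-power hypothesis feeds into Baumslag's parafree criterion via Mayland, and the case $p\ge 1$ is disposed of by Mayland--Murasugi for pseudo-alternating knots. So the outer scaffolding is right.

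The substantive gap is in what you think Mayland's hypothesis actually is. You describe it as ``a rank/congruence condition on $V\bmod p$'', equivalently ``a condition on the reduction modulo $p$ of the roots of $\Delta_J$''. That is not the condition, and it cannot be: the Seifert matrix only sees the abelianized data $X/H[X,X]$ and $X/K[X,X]$, and these quotients depend only on $V$ and $V^{\mathsf T}$. What Mayland's Proposition (Proposition~\ref{mayprop}) requires is that the \emph{subgroups} $H=i^+_*(\pi_1F)$ and $K=i^-_*(\pi_1F)$ be \emph{free factors} of $H[X,X]$ and $K[X,X]$. This is a genuinely nonabelian condition on words in the free group $X$, and it is not determined by $V$ modulo anything. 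In the paper the verification is done by: row-reducing $V$ to get coset representatives of $H[X,X]$; using Reidemeister--Schreier to write down an explicit free basis $\mathcal{B}$ of $H[X,X]$; rewriting the two generators $\alpha_H,\beta_H$ of $H$ in that basis; and then exhibiting, by hand, that $\{\alpha_H,\beta_H\}$ extends to a free basis (typically by finding basis elements appearing exactly once, or by a short sequence of Nielsen moves). The same is done for $K$. This is the content of Lemmas~\ref{lemrandomcases}, \ref{lem33r}, \ref{lemp3r}, \ref{lem3qr}, \ref{lem5qr} and the appendix, and it is where almost all the work lies. Your plan to ``substitute the explicit $V$ into the criterion'' bypasses exactly this step, and would not distinguish, say, $P(-5,7,7)$ (free factor property holds) from $P(-5,7,11)$ (it fails), since both are visible only after the nonabelian rewriting.

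A second, smaller point: the theorem is a list of \emph{sufficient} conditions, so you do not need to ``confirm that it genuinely fails'' on the excluded knots. Indeed the paper does not do this. A few of the exceptions ($P(-3,5,7)$, $P(-5,7,17)$, $P(-5,9,11)$) have $N=0$, hence trivial Alexander polynomial, and there the commutator subgroup is genuinely not residually torsion-free nilpotent by Proposition~\ref{trivial}. For the remaining exceptions the paper shows that the free factor property fails for the standard Seifert surface (Lemmas~\ref{lemmonic} and \ref{lemjustdontwork}), which only means the present method does not apply; the residual torsion-free nilpotence of those commutator subgroups is explicitly left open. So your ``twofold real work'' overstates what is required; the actual work is the free-factor verification off the list. (Also, avoid overloading the symbol $p$ for both the pretzel parameter and the prime.)
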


\begin{remark}
Proposition \ref{trivial} is the only known obstruction to the commutator subgroup of a genus one pretzel knot group being residually torsion-free nilpotent
so the exceptional cases in Theorem \ref{mainthm} with nontrivial Alexander polynomial remain unresolved
and cannot be resolved with the technique used in this paper.
\end{remark}

When $p\leq-2$ and $1\leq q\leq r$, $P(2p+1,2q+1,2r+1)$ is not a pseudo-alternating knot; see Proposition \ref{pseudoaltprop}.
Therefore, all of the examples from Theorem \ref{mainthm}, where $p<-1$, are new examples of knots with residually torsion-free nilpotent commutator subgroups. 

In addition, we also obtain pretzel knots of arbitrarily high genus whose groups have residually torsion-free nilpotent commutator subgroups.
However, we were not able to determine whether or not these knots are pseudo-alternating
so it is possible this result follows from Mayland and Murasugi's work.

\begin{thm} \label{mainthm2}
If $J$ is a $P(3,-3,\ldots,3, -3, 2r+1)$ pretzel knot for some integer $r$, then the commutator subgroup of $\pi_1(M_J)$ is residually-torsion free nilpotent.
\end{thm}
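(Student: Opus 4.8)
The plan is to apply Mayland's technique (Section \ref{secmayland}) directly to the Seifert surface $F$ produced by Seifert's algorithm on the pretzel diagram of Figure \ref{figpretzel}. By \cite{Gab86} this $F$ is an unknotted minimal genus Seifert surface, so its complement is a handlebody and the commutator subgroup $G$ of $\pi_1(M_J)$ is the infinite iterated free product of copies of the handlebody group amalgamated along copies of $\pi_1(F)$ that Mayland's technique, via the work of Baumslag \cite{Baum67,Baum69}, is designed to analyze. It therefore suffices to exhibit a prime $p$ for which the criterion of Section \ref{secmayland} holds, a criterion governed by the Seifert matrix $V$ of $F$ together with the prime $p$ whose power is the leading coefficient of $\Delta_J$.

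First I would record $V$. Since the Seifert surface of an all-odd pretzel knot is a linear plumbing of twisted bands, $V$ is a $2m\times 2m$ tridiagonal integer matrix, where $m$ is the number of $(3,-3)$ blocks and hence the genus; its diagonal is zero except in the final slot (which carries the $2r+1$ band), and the $\pm 3$ twists force every product of a superdiagonal and the corresponding subdiagonal entry to equal $2$. The resulting two-step determinant recursion collapses cleanly, and a short computation shows that, up to a unit in $\Z[t,t^{-1}]$,
\[
\Delta_J(t) = \bigl(2t^2-5t+2\bigr)^m = \bigl((2t-1)(t-2)\bigr)^m .
\]
In particular the leading coefficient is $2^m$, a power of $p=2$ independent of $r$; this is why, in contrast to Theorem \ref{mainthm}, no prime-power hypothesis on $\Delta_J$ must be assumed. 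Moreover the roots of $\Delta_J$ are $1/2$ and $2$ for every $m$ and $r$.

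With $p=2$ fixed I would then verify Mayland's criterion. The decisive feature is the factorization above: the root set is the $m$-independent pair $\{1/2,2\}$, neither root a root of unity, and over $\Z[1/2]$ the matrix $V$ is invertible (its determinant being $\pm 2^m$), so the homological gluing data $V^{-1}V^{T}$ of the amalgam is defined and has eigenvalues $2^{\pm 1}$ for all $m$. I would exploit the self-similar block-tridiagonal shape of $V$ to run an induction on $m$: passing from $m-1$ to $m$ blocks is a single plumbing that appends one $2\times 2$ block to $V$ and multiplies $\Delta_J$ by $2t^2-5t+2$, and I would show this operation preserves the criterion of Section \ref{secmayland}, with the genus-one knot $P(3,-3,2r+1)$ as the base case.

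The hard part will be exactly this uniform verification. Because the genus $m$ grows, the family cannot be reduced to finitely many cases as in Theorem \ref{mainthm}; the argument must be structural. Concretely, I expect the main obstacle to be showing that the finer part of Mayland's criterion—the condition on the Seifert form modulo $2$ that fails for the exceptional knots of Theorem \ref{mainthm}, beyond mere invertibility over $\Z[1/2]$—survives each plumbing, so that it holds simultaneously for all $m$ and all $r$. The $r$-independence of $\Delta_J$ and the constancy of the root set $\{1/2,2\}$ are the facts I would lean on to push the induction through.
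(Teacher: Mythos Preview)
Your computation of $\Delta_J(t)=(2t-1)^m(t-2)^m$ with leading coefficient $2^m$ is correct, and this does take care of two of the three hypotheses in Proposition~\ref{mayanaprop}: $J$ is rationally homologically fibered and $|\Delta_J(0)|$ is a prime power for every $m$ and $r$. But the third hypothesis---the free factor property---is where the actual work lies, and your proposal does not engage with it.

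The free factor property is \emph{not} a condition on the Seifert form, on the eigenvalues of $V^{-1}V^T$, or on anything modulo $2$. It is the purely group-theoretic requirement that the rank-$2m$ subgroup $H=i_*^+(\pi_1(F))$ be a free factor of the subgroup $H[X,X]$ inside the free group $X=\pi_1(M_S)$, and likewise for $K$. The Seifert matrix only tells you the abelian quotient $X/H[X,X]$ (here $(\Z/2\Z)^m$); it cannot decide whether $H$ sits inside $H[X,X]$ as a free factor. Indeed, the exceptional knots in Theorem~\ref{mainthm} fail the free factor property for reasons invisible at the level of the Seifert form---specific words in a Reidemeister--Schreier basis turn out to be primitive-obstructed (see Lemma~\ref{lemjustdontwork}). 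Your proposed induction on $m$ via plumbing would need to show that the free factor property is preserved under plumbing, and there is no general result of that kind; the Mayland--Murasugi theorem for pseudo-alternating links uses the very special combinatorics of special alternating diagrams, not a plumbing-stability argument.

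What the paper actually does is write down explicit free generators $\alpha_1,\ldots,\alpha_{2m}$ of $H$ in the handlebody group $X=\langle a_1,\ldots,a_{2m}\rangle$, compute $X/H[X,X]\cong(\Z/2\Z)^m$, produce a Reidemeister--Schreier basis $\mathcal{B}$ for $H[X,X]$, rewrite each $\alpha_i$ in terms of $\mathcal{B}$, and then observe by inspection that each $\alpha_i$ contains a basis element appearing exactly once and in no other $\alpha_j$, so that $\{\alpha_1,\ldots,\alpha_{2m}\}$ extends to a free basis of $H[X,X]$. This is a direct, hands-on verification for all $m$ simultaneously; the uniformity comes from the repetitive block structure of the generators, not from an induction on plumbings.
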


\subsection{Possible Generalizations}

The techniques used here have a few limitations.
First, while our method can be applied to many families of genus one pretzel knots on a case by case basis,
this method does not lend itself well to generalizing to all genus one pretzel knots since many of the details depend on the arithmetic properties of $p$, $q$ and $r$.
Secondly, Mayland's method requires a couple conditions (an unknotted Seifert surface satisfying the free factor property and an Alexander polynomial with prime power leading coefficient) which may not be necessary for a knot group to have residually torsion-free nilpotent commutator subgroup.
Nevertheless, we make the following prediction for genus one pretzel knots.

\begin{conj} \label{genconj}
If $J$ is a genus one pretzel knot then the commutator subgroup of $\pi_1(M_J)$ is residually torsion-free nilpotent if and only if the Alexander polynomial of $J$ is nontrivial.
\end{conj}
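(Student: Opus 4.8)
The plan is to split the conjecture into its two implications. The ``only if'' direction is already in hand: by Proposition \ref{trivial}, a trivial Alexander polynomial forces $G := [\pi_1(M_J), \pi_1(M_J)]$ to be perfect, so it admits no nontrivial nilpotent quotient and hence cannot be residually torsion-free nilpotent. All of the work therefore lies in the ``if'' direction, where I would try to push the method behind Theorems \ref{mainthm} and \ref{mainthm2} to \emph{every} genus one pretzel knot with nontrivial $\Delta_J$.

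First I would normalize the problem. Up to mirroring and orientation reversal, which preserve $\pi_1(M_J)$, every such knot is $P(2p+1, 2q+1, 2r+1)$ with $1 \le q \le r$, and the cases in which some parameter is $0$ or $-1$ are two-bridge and already covered by \cite{John20a}; so I may assume $p \ne 0, -1$ with all three strands nontrivial. Seifert's algorithm applied to the standard diagram yields a genus one Seifert surface that is unknotted \cite{Gab86}, placing us in the setting of Mayland's technique (Section \ref{secmayland}): $G$ becomes an infinite amalgamated free product of free groups along the surface group, with amalgamating maps determined by the Seifert data. The core steps are then to verify the free factor property for this surface and to feed the resulting structure into Baumslag's criterion \cite{Baum67, Baum69} to conclude residual torsion-free nilpotence.

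The main obstacle is the prime power hypothesis. Mayland's technique yields the conclusion only when the leading coefficient of $\Delta_J$, which for the $2 \times 2$ Seifert matrix $V$ equals $\det V$ up to sign, is a prime power; this is what makes the relevant localization of $\Z[t, t^{-1}]$ tame enough for Baumslag's theorems to govern the lower central quotients of the amalgamated product. For general $p, q, r$ this determinant is an arbitrary integer, and once it has two distinct prime factors the standard argument breaks down, because the control that Baumslag's criterion exerts over the lower central series of $G$ is tied to working over a single prime. To prove the conjecture one would have to replace the single prime localization by a simultaneous argument over all primes dividing the leading coefficient, which in effect asks for a strengthening of Baumslag's residual nilpotence criterion to composite multipliers. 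I expect this to be the decisive difficulty, since it is a limitation of the algebraic engine rather than of pretzel knots.

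Even inside the prime power range there remain the finite exceptional lists of Theorem \ref{mainthm}, such as $P(-3,5,5)$ and the excluded members of the $P(-5,\cdot,\cdot)$ family. These satisfy the prime power condition yet fail the numerical inequalities that would guarantee the free factor property or Baumslag's hypotheses, so they would need to be settled by a genuinely different mechanism; as the remark after Theorem \ref{mainthm} observes, no current method resolves them and Proposition \ref{trivial} provides no obstruction, which is exactly why the statement is offered as a conjecture.
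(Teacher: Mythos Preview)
The statement you are addressing is Conjecture~\ref{genconj}, and the paper does \emph{not} prove it; it is explicitly offered as a prediction beyond the reach of the methods developed. Your proposal correctly arrives at this conclusion in its final sentence, so in that sense there is no proof in the paper to compare against and your overall assessment---that only the ``only if'' direction is in hand via Proposition~\ref{trivial}---is accurate.

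That said, two details of your analysis deserve sharpening. First, the prime power hypothesis does not enter through any localization of $\Z[t,t^{-1}]$; it is required by Proposition~\ref{mayprop} (Mayland's theorem from \cite{May75}), which feeds into Baumslag's parafree machinery. The obstacle is purely group-theoretic: Baumslag's criterion for the amalgamated chain $Z^m$ to consist of parafree groups of fixed rank needs the index $|X:H[X,X]|$ to be a prime power, and this index equals $|\Delta_J(0)|$ by Proposition~\ref{leadcoefprop}. Second, the exceptional knots in Theorem~\ref{mainthm} are not merely cases where ``numerical inequalities'' fail to close. The paper actually proves (Lemmas~\ref{lemmonic} and~\ref{lemjustdontwork}, and Proposition~\ref{ffpobstructprop}) that for these knots the standard Seifert surface \emph{genuinely fails} the free factor property: for instance, for $P(-3,5,11)$ one exhibits a non-free quotient of $H[X,X]$ by the normal closure of $\beta_H$, obstructing any extension of $\{\alpha_H,\beta_H\}$ to a free basis. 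Since the standard surface is the unique minimal genus Seifert surface for these knots \cite{GodIsh06}, Mayland's method is blocked outright rather than merely underpowered---which is precisely the content of the remark following Theorem~\ref{mainthm}.
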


\subsection{Application to Bi-Orderability}

A group is said to be \emph{bi-orderable} if there exists a total order of the group's elements invariant under both left and right multiplication.
The following fact \cite[Theorem B]{CGW15} follows from work of Linnell, Rhemtulla, and Rolfsen \cite{LRR08}
and has been instrumental in determining the bi-orderability of several knot groups \cite{PerRolf03,CDN16,John20a}.

\begin{thm}{\cite[Theorem B]{CGW15}} \label{thmknotbo}
Let $J$ be a knot in $S^3$.
If $\pi_1(M_J)$ has residually torsion-free nilpotent commutator subgroup and all the roots of $\Delta_J$ are real and positive then $\pi_1(M_J)$ is bi-orderable.
\end{thm}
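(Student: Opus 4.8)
The plan is to build a bi-ordering of $\pi_1(M_J)$ out of the short exact sequence
\[
1 \longrightarrow G \longrightarrow \pi_1(M_J) \longrightarrow \Z \longrightarrow 1,
\]
where $G=[\pi_1(M_J),\pi_1(M_J)]$ is the commutator subgroup and the quotient is the abelianization $H_1(M_J)\cong\Z$, generated by the image of a meridian $t$. Since $\Z$ is bi-orderable, the extension result of Linnell--Rhemtulla--Rolfsen \cite{LRR08} reduces the problem to producing a bi-ordering of $G$ preserved by the conjugation action of $\pi_1(M_J)$. Because the positive cone of \emph{any} bi-ordering of a group is invariant under inner automorphisms, and $G$ is normal, this invariance reduces to invariance under the single automorphism $\phi\colon G\to G$ given by conjugation by $t$ (the monodromy of the infinite cyclic cover $M^\infty$). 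So the whole problem becomes: find a $\phi$-invariant bi-ordering of $G$.

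Next I would exploit residual torsion-free nilpotence. Let $G=\Gamma_1\supseteq\Gamma_2\supseteq\cdots$ be the rational (torsion-free) lower central series; residual torsion-free nilpotence gives $\bigcap_i\Gamma_i=1$ with each quotient $A_i:=\Gamma_i/\Gamma_{i+1}$ torsion-free abelian. The series is characteristic, so $\phi$ preserves it and induces an automorphism $\phi_i$ of each $A_i$; tensoring with $\Q$ makes $A_i\otimes\Q$ a finite-dimensional $\Q$-vector space on which $\phi_i$ acts linearly. The strategy is the classical one: choose a $\phi_i$-invariant bi-ordering on each $A_i$ and splice them together lexicographically along the filtration. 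Because $\bigcap_i\Gamma_i=1$ the resulting positive cone is total, and because each level is $\phi_i$-invariant the order is $\phi$-invariant; one checks it is a genuine bi-ordering. This reduces everything to ordering each abelian quotient compatibly with $\phi_i$.

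The crux is the linear-algebra criterion underlying \cite{LRR08}: a finite-dimensional $\Q$-vector space admits an additive bi-ordering invariant under a given automorphism if and only if all eigenvalues of that automorphism are real and positive. The forcing examples ($\lambda=-1$ would demand $v>0$ and $-v>0$ simultaneously; a non-real eigenvalue yields a two-dimensional invariant subspace on which $\phi$ has a rotational part) show why non-positive-real eigenvalues are obstructed, while for the converse one orders generalized eigenspaces lexicographically and refines inside each Jordan block. Granting this, it remains to locate the eigenvalues of each $\phi_i$. Here I would identify $A_1\otimes\Q=H_1(M^\infty;\Q)$ with the rationalized Alexander module $\bigoplus_i \Q[t,t^{-1}]/\langle a_i(t)\rangle$, on which $t$ acts with eigenvalues exactly the roots of $\Delta_J$; since the associated graded Lie algebra of the lower central series is generated in degree one, each $A_i\otimes\Q$ is a $\phi$-equivariant subquotient of the $i$-fold tensor power $(A_1\otimes\Q)^{\otimes i}$, so the eigenvalues of $\phi_i$ are products of $i$ roots of $\Delta_J$.

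The hypothesis now closes the argument: if every root of $\Delta_J$ is real and positive then every product of roots is real and positive, so each $\phi_i$ has only real positive eigenvalues, hence each $A_i$ carries a $\phi_i$-invariant bi-ordering; the lexicographic splice yields the desired $\phi$-invariant bi-ordering of $G$, and the extension theorem upgrades it to a bi-ordering of $\pi_1(M_J)$. The main obstacle is the eigenvalue bookkeeping together with the linear-algebra criterion of the third paragraph --- in particular making precise that the higher central quotients contribute only products of roots of $\Delta_J$ and nothing else, and handling the non-semisimple (Jordan block) case when constructing the invariant order on each level.
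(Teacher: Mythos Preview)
The paper does not supply its own proof of this statement; Theorem~\ref{thmknotbo} is quoted verbatim as Theorem~B of \cite{CGW15} (building on \cite{LRR08}) and is used as a black box. So there is nothing in the paper to compare your argument against.

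That said, your outline is the standard proof behind the cited result and is essentially correct. Two points deserve slightly more care if you want a complete argument. First, the linear-algebra criterion you invoke needs each $A_i\otimes\Q$ to be finite-dimensional; this holds because the associated graded Lie algebra of the lower central series is generated in degree one (since $[\Gamma_i,G]=\Gamma_{i+1}$), so $A_i\otimes\Q$ is an equivariant quotient of the degree-$i$ part of the free Lie algebra on $A_1\otimes\Q$, and $A_1\otimes\Q\cong H_1(M^\infty;\Q)$ is finite-dimensional because the rational Alexander module is torsion over $\Q[t,t^{-1}]$. Second, when you pull back the order from $A_i\otimes\Q$ to the torsion-free abelian group $A_i$, you are using that the natural map $A_i\hookrightarrow A_i\otimes\Q$ is injective and $\phi_i$-equivariant; this is where torsion-freeness of the quotients (hence the choice of the rational/isolator lower central series, and ultimately the residual torsion-free nilpotence hypothesis) is actually used. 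With those details made explicit, your sketch recovers the argument of \cite{LRR08,CGW15}.
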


Furthermore, Ito obtained the following obstruction to a knot group being bi-orderable when the knot is rationally homologically fibered \cite{Ito16};
see section \ref{secmayland} for definition of rationally homologically fibered.

\begin{thm}{\cite[Theorem 2]{Ito16}} \label{thmboobst}
Let $J$ be a rationally homologically fibered knot.
If $\pi_1(M_J)$ is bi-orderable then $\Delta_J$ has at least one real positive root.
\end{thm}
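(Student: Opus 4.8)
The plan is to convert bi-orderability into a statement about an invertible linear map and then extract a positive eigenvalue by a convexity argument. Write $G=\pi_1(M_J)$ and let $G'$ be its commutator subgroup, so that $G'=\pi_1(M^\infty)$ and $V:=H_1(M^\infty;\Q)$ is $(G'/[G',G'])\otimes\Q$. A meridian $\mu$ generates $G/G'\cong\Z$, conjugation by $\mu$ is an automorphism $\phi$ of $G'$, and the induced map $\phi_\ast$ on $V$ is the deck-transformation action, i.e.\ multiplication by $t$ on the Alexander module. The hypothesis that $J$ is rationally homologically fibered guarantees that $V$ is finite-dimensional over $\Q$ and that $\phi_\ast$ is invertible with characteristic polynomial equal to $\Delta_J$ up to a unit; hence the eigenvalues of $\phi_\ast$ are exactly the roots of $\Delta_J$, and it suffices to produce a positive real eigenvalue of $\phi_\ast$. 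Fixing a bi-ordering of $G$ with positive cone $P$, I note that $P$ is invariant under conjugation and $G'$ is normal, so $P':=P\cap G'$ is the positive cone of a bi-ordering of $G'$ and $\phi(P')=P'$; that is, $\phi$ is order-preserving.

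Next I would push this ordering down to $V_\R:=V\otimes\R$. The key subtlety is that a bi-ordering of $G'$ does \emph{not} descend to its abelianization, since $[G',G']$ need not be convex, so I cannot simply order $V_\R$. Instead I would invoke the structure theory of bi-ordered groups: the convex subgroups of $G'$ form a chain of normal subgroups, and the quotient by the maximal proper convex subgroup is archimedean, hence by Hölder's theorem embeds in $(\R,+)$. This yields a nonzero homomorphism $\rho\colon G'\to\R$ that kills $[G',G']$ and is nonnegative on $P'$; equivalently, a nonzero linear functional $\ell$ on $V_\R$ with $\ell\ge 0$ on the image of $P'$. Writing $\pi\colon G'\to V_\R$ for the natural map and $C$ for the closed convex cone generated by $\pi(P')$, one checks that every nonzero class of $\pi(G')$ or its negative lies in $\pi(P')$, and since these images realize a dense set of directions, $C\cup(-C)=V_\R$; meanwhile $C\subseteq\{\ell\ge 0\}\ne V_\R$ shows $C$ is proper. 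A short argument on the lineality space then forces $C$ to be a closed half-space $\{\ell\ge 0\}$. Finally, naturality gives $\phi_\ast(\pi(P'))=\pi(\phi(P'))=\pi(P')$, whence $\phi_\ast(C)=C$.

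The last step reads off the eigenvalue. Since $\phi_\ast$ is an invertible linear map preserving the half-space $C=\{\ell\ge 0\}$, it preserves the bounding hyperplane $\ker\ell$, so $\ell\circ\phi_\ast=\lambda\,\ell$ for some scalar $\lambda$; preservation of the positive side forces $\lambda\ge 0$, and invertibility forces $\lambda\ne 0$, so $\lambda>0$. Thus $\ell$ is an eigenvector of the transpose of $\phi_\ast$ with positive real eigenvalue $\lambda$, hence $\phi_\ast$ has $\lambda$ as an eigenvalue and $\Delta_J(\lambda)=0$ with $\lambda>0$. I expect the main obstacle to be exactly the descent in the second paragraph: bi-orderability (not merely left-orderability) is what makes the convex subgroups normal and supplies the order-preserving real character $\rho$ certifying that $C$ is a genuine half-space rather than all of $V_\R$; without it the eigenvalue extraction collapses. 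The rationally-homologically-fibered hypothesis plays the complementary role of keeping $V$ finite-dimensional and identifying the eigenvalues of $\phi_\ast$ with the roots of $\Delta_J$, so that the half-space argument produces an honest positive real root of the Alexander polynomial.
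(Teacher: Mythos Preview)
The paper does not give a proof of this statement at all: it is quoted verbatim as \cite[Theorem 2]{Ito16} and used as a black box, so there is no ``paper's own proof'' to compare against. Any assessment has to be of your argument on its own merits and, if anything, against Ito's original proof rather than anything in this paper.

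Your overall strategy---restrict the bi-order to $G'$, use conjugation-invariance to get an order-preserving automorphism $\phi$, push to $V_{\R}=H_1(M^\infty;\R)$, find a $\phi_\ast$-invariant closed half-space, and read off a positive eigenvalue of the transpose---is the right shape of argument and is close in spirit to how these results are proved. The step $C\cup(-C)=V_{\R}$ and the final eigenvalue extraction are fine.

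The genuine gap is your construction of $\ell$. You invoke ``the maximal proper convex subgroup'' of $G'$, but such a subgroup exists precisely when $G'$ has a cofinal element, and for a non-fibered knot $G'$ is \emph{not} finitely generated, so there is no reason to expect one. (In a finitely generated bi-ordered group the largest generator is cofinal; without finite generation the convex subgroups can form an unbounded chain.) Equivalently, you have not shown that the closed cone $C$ generated by $\pi(P')$ is proper: it is entirely possible that finitely many positive elements $g_1,\dots,g_k\in P'$ have images $\pi(g_i)$ that already positively span $V_{\R}$, and nothing you have written rules this out. The separating functional $\ell$ therefore has not been produced.

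Ito's argument (and the Linnell--Rhemtulla--Rolfsen circle of ideas it draws on) does not try to order $V$ directly. One way to repair your outline is to work instead with the \emph{convex jump} containing a fixed $g\in P'$: there is always a pair $M\lhd D$ of convex subgroups with $D/M$ archimedean and $g\in D\setminus M$; the automorphism $\phi$ permutes such jumps, and one tracks the induced action on the finite-dimensional associated graded pieces $(\gamma_iG'/\gamma_{i+1}G')\otimes\Q$ of the lower central series, whose eigenvalues are products of eigenvalues of $\phi_\ast$ on $V$. That extra layer---passing through a convex jump or a lower-central quotient rather than the abelianization itself---is exactly what is missing from your sketch.
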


The Alexander polynomial of the pretzel knot $P(2p+1,2q+1,2r+1)$ has the following form; see section \ref{secgenusone} for details.
\[
\Delta_J(t)=Nt^2+(1-2N)t+N
\]
where
\begin{equation} \label{detn}
N=\det
\left(
\begin{array}{cc}
p+q+1 & -q-1 \\
-q & q+r+1
\end{array}
\right).
\end{equation}
$\Delta_J$ has two positive real roots when $N<0$
and two non-real roots when $N>0$.
When $N=0$, $\Delta_J(t)=1$.
Therefore, we have the following proposition.
\begin{prop} \label{propnbo}
Let $J$ be the $P(2p+1,2q+1,2r+1)$ pretzel knot,
and let $N$ be defined as in (\ref{detn}).
When the commutator subgroup of $\pi_1(M_J)$ is residually torsion-free nilpotent and $N<0$, $\pi_1(M_J)$ is bi-orderable.
When $N>0$, $\pi_1(M_J)$ is never bi-orderable, regardless of whether or not the commutator subgroup of $\pi_1(M_J)$ is residually torsion-free nilpotent.
\end{prop}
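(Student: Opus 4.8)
The plan is to split the proposition into its two independent assertions and dispatch each one with one of the cited bi-orderability theorems, after first locating the roots of $\Delta_J$. Writing $\Delta_J(t)=Nt^2+(1-2N)t+N$ as in (\ref{detn}), I would compute its discriminant $(1-2N)^2-4N^2=1-4N$. The key observation is that $N$ is the determinant of an integer matrix and hence an integer, so \emph{$N>0$ means $N\geq 1$ and $N<0$ means $N\leq -1$}. For $N\geq 1$ the discriminant $1-4N\leq -3$ is negative, so $\Delta_J$ has a conjugate pair of non-real roots and in particular no positive real root; for $N\leq -1$ the discriminant is positive, while the product of the roots is $N/N=1>0$ and their sum is $2-1/N>0$, so both roots are real and positive. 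This reproves the two facts stated just before the proposition and is essentially the only computation needed.

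For the first assertion, suppose $N<0$ and that the commutator subgroup of $\pi_1(M_J)$ is residually torsion-free nilpotent. By the root count, all roots of $\Delta_J$ are real and positive, so the hypotheses of Theorem \ref{thmknotbo} hold verbatim and $\pi_1(M_J)$ is bi-orderable; this direction is immediate.

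The second assertion is the contrapositive of Theorem \ref{thmboobst}. When $N>0$ the polynomial $\Delta_J$ has no positive real root, so provided $J$ is rationally homologically fibered, Ito's theorem rules out bi-orderability of $\pi_1(M_J)$ with no reference to any nilpotence property. The step requiring care---and the main obstacle---is checking that $J$ satisfies the rationally-homologically-fibered hypothesis defined in Section \ref{secmayland}. I would verify this from the non-vanishing $N\neq 0$: the Seifert surface obtained from Seifert's algorithm on the diagram of Figure \ref{figpretzel} is a genus one surface whose Seifert matrix $V$ has determinant $N$ and satisfies $\Delta_J(t)=\det(tV-V^{T})$, so $\det V=N\neq 0$ says the Seifert form is nondegenerate over $\Q$, equivalently that the rational Alexander module is finite dimensional (note also $\Delta_J(0)=N\neq 0$). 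With $J$ thus rationally homologically fibered, Theorem \ref{thmboobst} gives the claimed non-bi-orderability for every such $J$, regardless of whether its commutator subgroup is residually torsion-free nilpotent, completing the proof.
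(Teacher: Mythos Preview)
Your proposal is correct and follows essentially the same approach as the paper: the paper observes (just before the proposition) that $\Delta_J$ has two positive real roots when $N<0$ and two non-real roots when $N>0$, then invokes Theorems \ref{thmknotbo} and \ref{thmboobst}, and in Section \ref{secgenusone} notes that $J$ is rationally homologically fibered when $N\neq 0$ via Proposition \ref{rhfprop}. Your write-up simply supplies the omitted computations---the discriminant $1-4N$ and the Vieta check that both real roots are positive---and independently verifies the rationally-homologically-fibered hypothesis from $\det V = N \neq 0$, which amounts to the same criterion $\Delta_J(0)\neq 0$ used in Proposition \ref{rhfprop}.
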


Applying Proposition \ref{propnbo} to the results in Theorem \ref{mainthm} yields the following corollary.

\begin{cor}
Given integers $p$, $g$ and $r$ with $1\leq q \leq r$ and $p\neq0$ or $-1$, let $J$ be the $P(2p+1,2q+1,2r+1)$ pretzel knot with Alexander polynomial $\Delta_J$.
\begin{enumerate}
\item $\pi_1(M_J)$ is bi-orderable if
\begin{itemize}
\item $J$ is $P(-3,3,2r+1)$,
\item $J$ is $P(-5,3,2r+1)$ and $|\Delta_J(0)|$ is a prime power, or
\item $J$ is $P(-5,7,7)$ or $P(-5,7,9)$.
\end{itemize}
\item $\pi_1(M_J)$ is not bi-orderable if
\begin{itemize}
\item $p \geq 1$,
\item $J$ is $P(-3,5,2r+1)$ with $r>3$,
\item $J$ is $P(-3,2q+1,2r+1)$ with $q\geq 2$,
\item $J$ is $P(-5,7,2r+1)$ with $r\geq 9$,
\item $J$ is $P(-5,9,2r+1)$ with $r\geq 6$, or
\item $J$ is $P(-5,2q+1,2r+1)$ with $q\geq 5$.
\end{itemize}
\end{enumerate}
\end{cor}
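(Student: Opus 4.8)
The plan is to derive the Corollary mechanically from Theorem \ref{mainthm} and Proposition \ref{propnbo}, the only genuine content being a sign computation for the integer $N$ of (\ref{detn}) in each listed family. First I would record two elementary facts. Expanding the determinant in (\ref{detn}) gives $N = pq+pr+qr+p+q+r+1$, and since $\Delta_J(t)=Nt^2+(1-2N)t+N$ its leading coefficient equals its constant term $\Delta_J(0)=N$. By the discussion preceding Proposition \ref{propnbo}, the bi-orderability dichotomy is governed entirely by the sign of $N$: when $N<0$ the polynomial has two positive real roots, when $N>0$ it has a pair of non-real roots, and $N=0$ forces $\Delta_J$ trivial.

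For part (1) I would, family by family, read off $p$ (so $p=-2$ for the $P(-3,\dots)$ knots and $p=-3$ for the $P(-5,\dots)$ knots), substitute into the formula for $N$, and check $N<0$: for instance $P(-3,3,2r+1)$ gives $N=-2$, $P(-5,3,2r+1)$ gives $N=-(r+4)$, and $P(-5,7,7)$, $P(-5,7,9)$ give $N=-5$ and $N=-4$. I would then verify that each such knot satisfies a clause of Theorem \ref{mainthm}, confirming in particular that none coincides with one of its listed exceptional knots and that the prime-power-leading-coefficient hypothesis holds. Since the leading coefficient is $N=\Delta_J(0)$, this last condition is literally the extra hypothesis that $|\Delta_J(0)|$ be a prime power imposed on $P(-5,3,2r+1)$, while for the finite cases $|N|\in\{2,4,5\}$ is automatically a prime power. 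Residual torsion-free nilpotence together with $N<0$ then yields bi-orderability via Proposition \ref{propnbo} (equivalently Theorem \ref{thmknotbo}).

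For part (2) only $N>0$ is needed, since Proposition \ref{propnbo} rules out bi-orderability whenever $N>0$ regardless of residual nilpotence, so Theorem \ref{mainthm} is not even invoked. When $p\geq1$ every summand of $N=pq+pr+qr+p+q+r+1$ is positive, so $N>0$ immediately. For the remaining families I would substitute $p=-2$ and $p=-3$ to obtain the convenient factorizations $N=(q-1)(r-1)-2$ and $N=(q-2)(r-2)-6$ respectively, so that, using $1\le q\le r$, each stated range collapses to a one-line inequality: $P(-3,5,2r+1)$ with $r>3$ gives $N=r-3>0$, $P(-5,7,2r+1)$ with $r\ge9$ gives $N=r-8>0$, $P(-5,9,2r+1)$ with $r\ge6$ gives $N=2r-10>0$, and the open clauses in $q$ reduce to $(q-1)(r-1)>2$ or $(q-2)(r-2)>6$.

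Conceptually nothing here is deep; the whole statement is a translation of Theorem \ref{mainthm} through Proposition \ref{propnbo}, and I expect the real effort (and the only place errors could hide) to be the bookkeeping: correctly extracting $p,q,r$ and the sign of $N$ in every family, and checking exhaustively that each part-(1) family avoids all of the exceptional knots of Theorem \ref{mainthm} while meeting the prime-power condition. The genuinely delicate point is the boundary where the factorized expression for $N$ is non-positive at small parameters: if $N=0$ then $\Delta_J$ is trivial, Proposition \ref{propnbo} is silent, and one must instead appeal to Ito's Theorem \ref{thmboobst} to exclude bi-orderability, which is legitimate only after confirming the knot is rationally homologically fibered. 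I would therefore treat those degenerate parameter values explicitly rather than folding them silently into an ``$N>0$'' clause, separating off any case with $N<0$ (where the present method leaves bi-orderability open) from the cases the Corollary actually asserts.
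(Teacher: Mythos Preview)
Your approach is exactly the paper's: the corollary is stated there with no proof beyond the sentence ``Applying Proposition \ref{propnbo} to the results in Theorem \ref{mainthm} yields the following corollary,'' and your plan carries out precisely that translation, with the sign of $N$ doing all the work. Your boundary caution is well placed: the clause ``$P(-3,2q+1,2r+1)$ with $q\geq 2$'' is almost certainly a typo for $q\geq 3$, since at $q=2$, $r\in\{2,3\}$ one has $N\leq 0$ and Proposition \ref{propnbo} is silent (and Ito's theorem does not apply either, as $N=0$ forces the knot to fail rational homological fiberedness), while the separate bullet for $P(-3,5,2r+1)$ with $r>3$ already handles $q=2$ where $N>0$.
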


We also have the following corollary to Theorem \ref{mainthm2}.

\begin{cor} \label{maincor2}
If $J$ is the $P(3,-3,\ldots,3, -3, 2r+1)$ pretzel knot for some integer $r$, then $\pi_1(M_J)$ is bi-orderable.
\end{cor}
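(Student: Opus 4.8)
The plan is to deduce the corollary from Theorem \ref{mainthm2} together with Theorem \ref{thmknotbo}. Theorem \ref{mainthm2} already supplies the first hypothesis of Theorem \ref{thmknotbo}, namely that $\pi_1(M_J)$ has residually torsion-free nilpotent commutator subgroup. Thus the only thing left to verify is the second hypothesis: that every root of $\Delta_J$ is real and positive. In other words, the entire content of the proof is an Alexander polynomial computation followed by an elementary root analysis.

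To compute $\Delta_J$, I would use the Seifert matrix $V$ of the unknotted minimal genus Seifert surface produced by Seifert's algorithm on the pretzel diagram (the same surface underlying Mayland's technique). Writing the $2m$ tassels of $J$ as $3,-3,3,-3,\ldots,3,-3$ followed by the $2r+1$ tassel, $V$ is tridiagonal of size $2m$; the alternating signs force every diagonal entry to vanish except the last, and a short computation shows that every product of a super- and sub-diagonal pair of $tV - V^{T}$ equals the single quadratic $f(t) = 2t^2 - 5t + 2$. Then $\Delta_J(t) \doteq \det(tV - V^{T})$ (equality up to units) can be evaluated by the standard recursion for the determinant of a tridiagonal matrix. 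The key observation is that the cofactor of the lone nonzero diagonal entry is the determinant of an odd-sized tridiagonal matrix with zero diagonal, which vanishes identically; hence the parameter $r$ drops out and the recursion collapses to $\Delta_J(t) \doteq f(t)^{m} = (2t^2 - 5t + 2)^{m}$.

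With the Alexander polynomial in hand the conclusion is immediate: $2t^2 - 5t + 2 = (t-2)(2t-1)$, so the only roots of $\Delta_J$ are $t = 2$ and $t = \tfrac{1}{2}$, each of multiplicity $m$, and both are real and positive. Therefore all roots of $\Delta_J$ are real and positive, and Theorem \ref{thmknotbo} applies to give that $\pi_1(M_J)$ is bi-orderable.

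I expect the only real obstacle to be the determinant computation, and in particular pinning down that the contribution of the final diagonal entry vanishes, so that $\Delta_J$ is genuinely independent of $r$ and is a perfect power of $f(t)$; once this structural fact is established the root analysis and the appeals to Theorems \ref{mainthm2} and \ref{thmknotbo} are routine. In fact this Alexander polynomial, together with the observation that its leading coefficient $2^{m}$ is a prime power, is exactly what is needed to invoke Mayland's technique in the proof of Theorem \ref{mainthm2}, so the computation may already be available there and can simply be cited.
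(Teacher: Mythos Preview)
Your proposal is correct and follows essentially the same approach as the paper: compute the Alexander polynomial from the Seifert matrix, observe that it factors as $(t-2)^k(2t-1)^k$ (so all roots are real and positive, independently of $r$), and then combine Theorem~\ref{mainthm2} with Theorem~\ref{thmknotbo}. The paper simply states $\Delta_J(t)=(t-2)^k(2t-1)^k$ without detailing the tridiagonal determinant computation, so your explanation of why the $r$-contribution vanishes (the cofactor of the lone nonzero diagonal entry is an odd-size zero-diagonal tridiagonal determinant) is a nice supplement rather than a departure.
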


Details of the proof of Corollary \ref{maincor2} are provided in section \ref{sechighergenus}.

\subsection{A Possible Connection of Bi-Orderability to Branched Covers}

Given a knot $J$ in $S^3$, let $\Sigma_n(J)$ be the \emph{$n$-fold cyclic cover of $S^3$ branched over $J$};
see \cite[Chapter 10]{Rolf76} for the definition and construction of a cyclic branched cover.
Part of the motivation for studying the bi-orderability of pretzel knots is to investigate the following questions.

\begin{quest} \label{quest1}
Do there exist knots with $\pi_1(M_J)$ bi-orderable and $\pi_1(\Sigma_n(J))$ left-orderable for some $n$?
\end{quest}

\begin{quest} \label{quest2}
Does $\pi_1(M_J)$ not being bi-orderable imply that $\pi_1(\Sigma_n(J))$ is left-orderable for some $n$?
\end{quest}
 
Question \ref{quest1} is resolved here.

\begin{thm} \label{countexamthm}
For each integer $q\geq 3$, let $J_q$ be the $P(1-2q,2q+1,4q-3)$ pretzel knot.
When $q-1$ is a prime power, $\pi_1(M_{J_q})$ is bi-orderable, and
$\pi_1(\Sigma_2(J_q))$ is left-orderable.
\end{thm}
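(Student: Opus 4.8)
The plan is to treat the two conclusions separately, both reduced to data read off from the pretzel parameters. Write $J_q = P(2a+1, 2b+1, 2c+1)$ in the normalization preceding Theorem \ref{mainthm}, so that $2a+1 = 1-2q$, $2b+1 = 2q+1$, and $2c+1 = 4q-3$; that is, $(a,b,c) = (-q,\, q,\, 2q-2)$. Since $q \geq 3$ we have $1 \leq b \leq c$ and $a \notin \{0,-1\}$, so $J_q$ meets the standing conventions. Substituting $(a,b,c)$ into (\ref{detn}) gives
\[
N = \det\begin{pmatrix} a+b+1 & -b-1 \\ -b & b+c+1 \end{pmatrix} = \det\begin{pmatrix} 1 & -q-1 \\ -q & 3q-1 \end{pmatrix} = (3q-1) - q(q+1) = -(q-1)^2 .
\]
Thus the leading coefficient of $\Delta_{J_q}$ equals $N = -(q-1)^2 < 0$ for every $q \geq 3$, and $|N| = (q-1)^2$ is a prime power exactly when $q-1$ is a prime power --- which is the hypothesis of the theorem, and is precisely the prime-power leading-coefficient input that Mayland's technique requires.

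For bi-orderability of $\pi_1(M_{J_q})$ I would first show that the commutator subgroup is residually torsion-free nilpotent and then invoke Proposition \ref{propnbo}, whose hypotheses (residual torsion-free nilpotence together with $N < 0$) are then met. Residual torsion-free nilpotence does not follow from Theorem \ref{mainthm} once $q \geq 4$, since then $2a+1 = 1-2q \leq -7$ lies outside the four listed families; so instead I would apply Mayland's technique from Section \ref{secmayland} directly to the family $J_q$. Concretely, I would take the unknotted minimal-genus Seifert surface produced by Seifert's algorithm on the standard diagram, verify the free-factor property for it, and combine the prime-power leading coefficient $-(q-1)^2$ with Baumslag's criterion to conclude. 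I expect this uniform-in-$q$ verification to be the main obstacle on the bi-orderability side, as it is exactly the step that Theorem \ref{mainthm} otherwise carries out only case by case.

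For left-orderability of $\pi_1(\Sigma_2(J_q))$ I would pass to the double branched cover. As a three-strand pretzel (hence Montesinos) knot, $J_q$ has $\Sigma_2(J_q)$ a Seifert fibered space over $S^2$ with three exceptional fibers of multiplicities $|1-2q| = 2q-1$, $2q+1$, and $4q-3$. Since $1/(2q-1) + 1/(2q+1) + 1/(4q-3) < 1$ for $q \geq 3$, the base orbifold is hyperbolic and $\pi_1(\Sigma_2(J_q))$ is infinite; moreover $\Sigma_2(J_q)$ is a rational homology sphere, as $|H_1(\Sigma_2(J_q);\Z)| = |\Delta_{J_q}(-1)| = |4N-1| = 4(q-1)^2 + 1$ is finite. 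I would then compute the unnormalized Seifert invariants and the Euler number explicitly from $(2q-1, 2q+1, 4q-3)$, and appeal to the established equivalence, for Seifert fibered rational homology spheres over $S^2$, between left-orderability of the fundamental group, admitting a co-orientable horizontal (taut) foliation, and not being a Heegaard Floer L-space. The remaining work is to verify the relevant arithmetic realization condition for a horizontal foliation (equivalently, that $\Sigma_2(J_q)$ is not an L-space) for the whole family, which yields left-orderability of $\pi_1(\Sigma_2(J_q))$.

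Together the two halves produce, for every $q \geq 3$ with $q-1$ a prime power, a knot $J_q$ with $\pi_1(M_{J_q})$ bi-orderable and $\pi_1(\Sigma_2(J_q))$ left-orderable, answering Question \ref{quest1} in the affirmative. The hardest part is the branched-cover half: fixing the Seifert invariants and Euler number with the correct conventions and then confirming the foliation/non-L-space condition holds for the entire family $(2q-1, 2q+1, 4q-3)$, rather than sporadically. The residual-nilpotence step, though technical, is a direct use of the machinery already assembled for Theorem \ref{mainthm}.
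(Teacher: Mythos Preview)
Your overall approach is correct and matches the paper's: compute $N=-(q-1)^2$, verify the free factor property for the standard Seifert surface of $J_q$ directly via Mayland's technique (this is the paper's Lemma~\ref{lemcounter}), apply Proposition~\ref{mayanaprop} and Proposition~\ref{propnbo} for bi-orderability, and handle the double branched cover as a Seifert fibered space.

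The one place you diverge from the paper is in the branched-cover half, and your difficulty assessment is inverted as a result. You propose to compute Seifert invariants and Euler number and then check the horizontal-foliation/non-L-space realization condition for the whole family. But this work is already packaged in Proposition~\ref{doublebranchprop}: for $P(2p+1,2q+1,2r+1)$ with $p<-1$ and $1\le q\le r$, $\pi_1(\Sigma_2(J))$ is left-orderable if and only if $-p\le q$. For $J_q$ one has $p=-q$, so $-p=q\le q$ holds trivially and left-orderability is immediate---no family-wide arithmetic verification is needed. (The Jankins--Neumann/Naimi realization analysis you allude to is exactly what underlies Proposition~\ref{doublebranchprop}; the paper already carried it out in the paragraph preceding that proposition, finding $m=2q$, $a=2q-1$.) Consequently the genuine technical content sits entirely in the free-factor verification for $H$ and $K$ uniformly in $q$, which the paper does by an explicit Reidemeister--Schreier computation; that, not the branched cover, is where you should expect to spend your effort.
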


\begin{remark}
Question \ref{quest1} is still unanswered for fibered knots and alternating knots.
\end{remark}

Question \ref{quest2} remains unresolved as of the writing of this paper.
However, some important remarks can be made about this question.

Suppose $J$ is a pretzel knot $P(2p+1,2q+1,2r+1)$ with $1\leq q\leq r$.
When $p\geq 1$, the signature of $J$ is nonzero
which likely means that $\pi_1(\Sigma_n(J))$ is left-orderable for $n$ sufficiently large; see \cite[Corollary 1.2, Question 1.3]{Gor17}.

Suppose $p<-1$.
By the Montesinos trick \cite{Mon75}, the double branched cover of $J$ is the Seifert fibered space
\[
\Sigma_2(J)=M(0;-1,\frac{-2p-2}{-2p-1},\frac{1}{2q+1},\frac{1}{2r+1}).
\]
By work of Eisenbud-Hirsch-Neumann \cite{EHN81}, 
Lisca-Stipsicz \cite{LisSti09}, Jankins-Neuman \cite{JanNeu85}, Naimi \cite{Nai94}, and Boyer-Rolfsen-Weist \cite{BRW05},
$\Sigma_2(J)$ is left-orderable if and only if there are positive integers $a$ and $m$ such that the triple $(\frac{-2p-2}{-2p-1},\frac{1}{2q+1},\frac{1}{2r+1})$ is less than some permutation of the triple $(\frac{a}{m},\frac{m-a}{m},\frac{1}{m})$.
This happens precisely when $1<-p\leq q$.
In this case, $m=2q$ and $a=2q-1$.
Therefore, we can state the following proposition.

\begin{prop} \label{doublebranchprop}
Suppose $J$ is the $P(2p+1,2q+1,2r+1)$ pretzel knot with $p<-1$ and $1\leq q\leq r$.
$\pi_1(\Sigma_2(J))$ is left-orderable if and only if $-p\leq q$.
\end{prop}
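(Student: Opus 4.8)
The plan is to treat the Proposition as a book-keeping consequence of the two facts already assembled before the statement: the Montesinos-trick identification of $\Sigma_2(J)$ with the Seifert fibered space $M(0;-1,\frac{-2p-2}{-2p-1},\frac{1}{2q+1},\frac{1}{2r+1})$, and the cited left-orderability criterion, which I read as follows: $\pi_1(\Sigma_2(J))$ is left-orderable if and only if there are positive integers $a,m$ and a permutation $\sigma\in S_3$ so that $(\frac{-2p-2}{-2p-1},\frac{1}{2q+1},\frac{1}{2r+1})$ is strictly smaller, coordinate by coordinate, than $\sigma$ applied to $(\frac{a}{m},\frac{m-a}{m},\frac{1}{m})$. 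To streamline the arithmetic I would set $P:=-p$, so the hypothesis $p<-1$ becomes $P\geq 2$ (an integer); note that under $P\geq2$ the condition ``$1<-p\leq q$'' appearing in the discussion preceding the statement is literally the same as ``$-p\leq q$''. It therefore suffices to show that the domination above is possible exactly when $P\leq q$.

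For the ``if'' direction I would produce the witnesses $m=2q$ and $a=2q-1$ named in the text, for which the dominating triple is $(\frac{2q-1}{2q},\frac{1}{2q},\frac{1}{2q})$. I match $\frac{-2p-2}{-2p-1}=\frac{2P-2}{2P-1}$ against $\frac{2q-1}{2q}$ and the two small coordinates against the two copies of $\frac{1}{2q}$. Writing the first comparison as $1-\frac{1}{2P-1}<1-\frac{1}{2q}$ reduces it to $2P-1<2q$, i.e. $P\leq q$, which is the hypothesis; the comparison $\frac{1}{2q+1}<\frac{1}{2q}$ is automatic, and $\frac{1}{2r+1}<\frac{1}{2q}$ follows from $q\leq r$. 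Hence $P\leq q$ yields left-orderability.

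The substance is the converse, and this is the step I expect to require the most care. Assume a permutation and positive integers $a,m$ realize the domination. Since each target coordinate is positive, all three dominating values are positive; in particular $m-a\geq1$, so $m\geq2$ and $\frac1m\leq\frac12$. Now $\frac{2P-2}{2P-1}\geq\frac23>\frac12$ for $P\geq2$, so the dominating value placed above $\frac{-2p-2}{-2p-1}$ must exceed $\frac12$; as $\frac1m\leq\frac12$ and $\frac{a}{m}+\frac{m-a}{m}=1$ permits at most one of those two to exceed $\frac12$, that value must be $\frac{a}{m}$ (relabelling $a\leftrightarrow m-a$ if necessary). Thus $\frac{2P-2}{2P-1}<\frac{a}{m}$, and I would draw two consequences from this single inequality: since $a\leq m-1$ we get $\frac{2P-2}{2P-1}<\frac{m-1}{m}$, which simplifies to $m\geq2P$; and $\frac{m-a}{m}=1-\frac{a}{m}<\frac{1}{2P-1}$. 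The remaining coordinates $\frac{1}{2q+1}$ and $\frac{1}{2r+1}$ must be dominated by $\frac{m-a}{m}$ and $\frac1m$ under one of the two pairings, so a short case check finishes it: if $\frac{1}{2q+1}<\frac{m-a}{m}$ then $\frac{1}{2q+1}<\frac{1}{2P-1}$ gives $P\leq q$ directly, while if instead $\frac{1}{2q+1}<\frac1m$ then $m<2q+1$, and combining with $m\geq2P$ again gives $P\leq q$. Either way $-p\leq q$. The only genuine obstacle is keeping the permutation book-keeping honest---verifying that the large coordinate is necessarily matched to $\frac{a}{m}$, and that the inequality $m\geq2P$ carries the awkward pairing---but no input beyond the quoted criterion is needed.
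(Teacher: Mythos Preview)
Your argument is correct and follows exactly the approach the paper takes: the paper identifies $\Sigma_2(J)$ via the Montesinos trick, invokes the Eisenbud--Hirsch--Neumann/Jankins--Neumann/Naimi/Boyer--Rolfsen--Wiest criterion, and names the same witnesses $m=2q$, $a=2q-1$ for the ``if'' direction. The paper simply asserts ``This happens precisely when $1<-p\leq q$'' without spelling out the converse arithmetic, so your case analysis for the ``only if'' direction in fact supplies more detail than the paper itself provides.
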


Thus, if $p<-1$ and the double branched cover of $J$ does not have left-orderable fundamental group,
then $q<-p$
so $N$ as defined in (\ref{detn}) is negative.
Therefore, if Conjecture \ref{genconj} is true, $\pi_1(M_J)$ would be bi-orderable when $q<-p$ by Proposition \ref{propnbo}.
In particular, if Conjecture \ref{genconj} is true, it's not likely that any non-alternating genus one pretzel knot would be a counterexamples for Question \ref{quest2}.

There is some evidence that genus one pretzel knots with no left-orderable cyclic branched covers do exists.
It is conjectured \cite{BGW13} that given a prime orientable closed rational homology sphere $Y$,
$\pi_1(Y)$ is not left-orderable if and only if $Y$ is an L-space,
and Issa and Turner show that the cyclic branched covers of the $P(-3,3,2r+1)$ pretzel knots are all L-spaces \cite{IssTur20}.

\subsection{Outline}
In section \ref{secmayland}, we review how Mayland's technique \cite{May75} can be used to analyze when the commutator subgroup of a knot group is residually torsion-free nilpotent.
In section \ref{secgenusone}, we apply this technique to genus one pretzel knots and prove Theorem \ref{mainthm} and Theorem \ref{countexamthm}.
In section \ref{sechighergenus}, we prove Theorem \ref{mainthm2}.
Appendix \ref{secproofs} contains the proofs of some key lemmas.
We also provide a chart of our results in appendix \ref{seccharts}.

\subsection{Acknowledgments}

The author would like to thank Cameron Gordon for his guidance and encouragement throughout this project.
The author would like to thank Ahmad Issa and Hannah Turner for many helpful conversations. The author would like to thank the anonymous Algebraic \& Geometric Topology referee for helpful comments and critiques. This research was supported in part by NSF grant DMS-1937215.

\section{Preliminaries on Mayland's Technique} \label{secmayland}

Mayland used a description of the commutator subgroup of a knot group to investigate when they are residual finite \cite{May75}.
In this section, we show how Mayland's technique can be used to find a sufficient condition for the commutator subgroup of a knot group to be residually torsion-free nilpotent.

\subsection{Mayland's Technique}

Let $J$ be a knot in $S^3$, and suppose $J$ bounds a minimal genus Seifert surface $S$ such that $S$ is \emph{unknotted}, in other words, $\pi_1(S^3\backslash S)$ is a free group.
Let $\hat{S}=M_J\cap S$.
Let $G$ be the commutator subgroup of $\pi_1(M_J)$.

Let $U$ be the image of a bi-collar embedding $\hat{S}\times[-1,1]\hookrightarrow M_J$ where $\hat{S}$ is the image of $\hat{S}\times\{0\}$,
and let $M_S=M_J\backslash \hat{S}$.
Denote images of $\hat{S}\times (0,1]$ and $\hat{S}\times [-1,0)$ in $M_S$ as $U^+$ and $U^-$ respectively.
Let $X=\pi_1(M_S)$ which is a free group of rank $2g$ where $g$ is the genus of $J$.
Consider the inclusion maps $i^+:U^+\to M_S$ and $i^-:U^-\to M_S$.
Let $H$ be the image of the induced map $i^+_*:\pi_1(U^+)\to \pi_1(M_S)$ and $K$ be the image of $i^-_*:\pi_1(U^-)\to \pi_1(M_S)$.

For each integer $n$, let $X_n$ be a copy of $X$, $H_n\subset X_n$ be a copy of $H$,
and $K_n\subset X_n$ be a copy of $K$.
The fundamental groups of $U$, $U^+$ and $U^-$ are canonically isomorphic,
and since $S$ has minimal genus, $i^+_*$ and $i^-_*$ are injective.
Therefore, $H_n$ and $K_{n+1}$ are identified with a rank $2g$ free group $F$.
By a result of  Brown and Crowell \cite[Theorem 2.1]{BroCro66}, $G$ is an amalgamated free product of the following form.

\begin{equation} \label{cycliccoverprop}
G\cong \cdots \underset{F}{*} X_{-2} \underset{F}{*} X_{-1} \underset{F}{*} X_0 \underset{F}{*} X_1 \underset{F}{*} X_2 \underset{F}{*} \cdots
\end{equation}

Baumslag provides the following sufficient condition \cite[Proposition 2.1(i)]{Baum69} for a group to be residually torsion-free nilpotent when $G$ is an ascending chain of \emph{parafree} subgroups; see \cite{Baum67,Baum69} for definition and discussion of parafree groups.
\begin{prop}{\cite[Proposition 2.1(i)]{Baum69}} \label{baumprop}
Suppose $G$ is a group which is the union of an ascending chain of groups as follows.
$$G_0<G_1<G_2<\cdots<G_n<\cdots<G=\bigcup_{n=1}^{\infty}G_n$$
Suppose each $G_n$ is parafree of the same rank.
If for each non-negative integer $n$, $|G_{n+1}:G_n[G_{n+1},G_{n+1}]|$ is finite, then
$G$ is residually torsion-free nilpotent.
\end{prop}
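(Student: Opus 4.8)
The plan is to produce, for each nontrivial $g \in G$, a torsion-free nilpotent quotient of $G$ in which $g$ survives. The natural candidates are the quotients $G/\sqrt{\gamma_c(G)}$, where $\gamma_c$ denotes the $c$-th term of the lower central series and $\sqrt{\gamma_c(G)}$ is its isolator, i.e.\ the preimage of the torsion subgroup of the nilpotent group $G/\gamma_c(G)$. Since the torsion elements of a nilpotent group form a subgroup, $\sqrt{\gamma_c(G)}$ is normal and $G/\sqrt{\gamma_c(G)}$ is torsion-free nilpotent; so the whole problem reduces to showing that every nontrivial $g$ lies outside some $\sqrt{\gamma_c(G)}$.

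First I would localize $g$ in the chain: since $G = \bigcup_n G_n$, we have $g \in G_m$ for some $m$. Because $G_m$ is parafree it is residually nilpotent, so $\bigcap_c \gamma_c(G_m) = 1$ and we may fix $c$ with $g \notin \gamma_c(G_m)$; moreover $G_m/\gamma_c(G_m) \cong F/\gamma_c(F)$ is torsion-free (free nilpotent groups are torsion-free), so in fact $g^k \notin \gamma_c(G_m)$ for every $k \geq 1$. The remaining task is to transfer this from $G_m$ to $G$: using $\gamma_c(G) = \bigcup_{n \geq m} \gamma_c(G_n)$, it suffices to rule out $g^k \in \gamma_c(G_n)$ for any $n \geq m$ and any $k$.

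The crux is the claim that each inclusion $\iota \colon G_n \hookrightarrow G_{n+1}$ induces a rational isomorphism on every lower central quotient $\gamma_c/\gamma_{c+1}$. Here both hypotheses enter. Because each $G_n$ is parafree of rank $r$, its associated graded Lie ring $\bigoplus_c \gamma_c(G_n)/\gamma_{c+1}(G_n)$ is the free Lie ring on $\Z^r$, hence generated in degree $1$; consequently the graded map induced by $\iota$ is the functorial free-Lie-ring extension of its degree-$1$ part $G_n^{\mathrm{ab}} \to G_{n+1}^{\mathrm{ab}}$. The finiteness of $|G_{n+1} : G_n[G_{n+1},G_{n+1}]|$ says exactly that this degree-$1$ map, a homomorphism $\Z^r \to \Z^r$, has finite cokernel, hence becomes an isomorphism after $\otimes \Q$; applying the free Lie algebra functor over $\Q$ shows the induced map is a rational isomorphism in every degree. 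I expect this step---correctly identifying the graded Lie rings as free and reducing the higher-degree maps to degree $1$---to be the main obstacle, as it is what genuinely uses the parafree hypothesis rather than mere residual nilpotence.

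Granting the claim, I would push it up the lower central series by induction on $c$: comparing the central extension $1 \to \gamma_{c-1}(G_m)/\gamma_c(G_m) \to G_m/\gamma_c(G_m) \to G_m/\gamma_{c-1}(G_m) \to 1$ with the corresponding extension for $G_n$, a short diagram chase shows that the map $G_m/\gamma_c(G_m) \to G_n/\gamma_c(G_n)$ has torsion kernel (the subgroup map is injective by the claim, the quotient map has torsion kernel by induction). Therefore if $g^k \in \gamma_c(G_n)$ for some $n \geq m$, then the image of $g^k$ in $G_m/\gamma_c(G_m)$ is torsion; but that group is torsion-free, forcing $g^k \in \gamma_c(G_m)$, contrary to the second paragraph. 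Hence $g \notin \sqrt{\gamma_c(G)}$, and $G/\sqrt{\gamma_c(G)}$ is the required torsion-free nilpotent quotient separating $g$ from the identity. As $g$ was an arbitrary nontrivial element, $G$ is residually torsion-free nilpotent.
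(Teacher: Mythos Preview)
The paper does not prove this proposition; it is quoted verbatim from Baumslag \cite{Baum69} and used as a black box. So there is no ``paper's own proof'' to compare against.

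Your argument is a correct reconstruction of the standard proof. The key insight---that parafreeness identifies each associated graded Lie ring with the free Lie ring on $\Z^r$, so the map induced by $G_n \hookrightarrow G_{n+1}$ on graded pieces is the free-Lie extension of its degree-$1$ part, which the index hypothesis makes a rational isomorphism---is exactly right and is indeed where both hypotheses are consumed. Two small points worth tightening: (i) when you say ``the subgroup map is injective by the claim,'' you are silently using that a rational isomorphism between free abelian groups of the same finite rank is already injective over $\Z$; and (ii) in your diagram chase, once you know the kernel of $G_m/\gamma_c(G_m) \to G_n/\gamma_c(G_n)$ is torsion and the source is torsion-free, the map is simply injective, so you can conclude $g \notin \gamma_c(G_n)$ directly without passing through powers. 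Neither of these is a gap, just places where the writeup could be one sentence cleaner.
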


For each non-negative integer $m$, define $Z^m$ as follows.
\begin{equation} \label{zs}
Z^m:=X_{-m}\underset{F}{*}X_{1-m}\underset{F}{*}\cdots\underset{F}{*}X_{m-1}\underset{F}{*}X_m
\end{equation}
The direct limit of the $Z^m$'s is isomorphic to $G$.
Furthermore, since $i^+_*$ and $i^-_*$ are injective, the natural inclusion $Z^m\hookrightarrow Z^{m+1}$ is an embedding so $G$ is an ascending chain of subgroups as follows.
\[
Z^0<Z^1<Z^2<\cdots<Z^m<\cdots<G=\bigcup_{m=1}^{\infty}Z^m
\]

A subgroup $A$ of a free group $B$ is a \emph{free factor} if $B=A*D$ for some subgroup $D$ of $B$.
It immediately follows that $A$ is a free factor of $B$ if and only if every (equivalently, at least one) free basis of $A$ extends to a free basis of $B$.
A theorem of Mayland \cite[Theorem 3.2]{May75} provides the following sufficient conditions for each $Z^m$ to be parafree.

\begin{prop}{\cite[Theorem 3.2]{May75}} \label{mayprop}
If $H$ and $K$ are free factors of $H [X,X]$ and $K [X,X]$ respectively, and if additionally, $|X:H[X,X]|=|X:K[X,X]|=p^l$ for some prime $p$ and non-negative integer $l$, then for every non-negative $m$, $Z^m$ is parafree of rank $2g$.
\end{prop}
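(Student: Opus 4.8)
The plan is to induct on $m$. The base case is immediate, since $Z^0 = X_0$ is free of rank $2g$ and a free group is parafree of its own rank. For the inductive step I would write $Z^{m+1} = X_{-(m+1)} \underset{F}{*} Z^m \underset{F}{*} X_{m+1}$, so that $Z^{m+1}$ is obtained from the parafree group $Z^m$ by two amalgamations, each gluing a fresh copy of the free group $X$ along $F$. Everything then reduces to one amalgamation lemma: if $P$ is parafree of rank $2g$ and $X$ is free of rank $2g$, and $F$ is identified with the terminal copy of $H$ inside $P$ and with $K \subset X$, then $P \underset{F}{*} X$ is parafree of rank $2g$. A convenient feature of this setup is that the two outermost amalgamating subgroups of $Z^{m+1}$ always live inside genuinely free factors $X_{\pm(m+1)}$, so the hypotheses that $H$ and $K$ are free factors of $H[X,X]$ and $K[X,X]$ of common prime-power index $p^l$ are literally available at every stage of the induction.

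To prove the lemma I must check the two defining properties of a parafree group of rank $2g$: that the lower central quotients coincide with those of the free group $\Phi$ of rank $2g$, and that the group is residually nilpotent. For the lower central quotients I would proceed homologically. A Mayer--Vietoris computation for $P \underset{F}{*} X$, using that the image of $F$ has index $p^l$ in the abelianizations on both sides, should identify the first homology as $\Z^{2g}$ with no torsion; torsion-freeness here is forced by the parafree hypothesis and is exactly the point where the prime-power index must be accounted for, since the two index-$p^l$ inclusions could otherwise leave $p$-torsion in $H_1$. Having arranged a homomorphism to $\Phi$ realizing this identification on $H_1$, I would invoke Stallings' theorem: because $\Phi$ is free, $H_2(\Phi) = 0$, so the required surjectivity on $H_2$ is automatic, and an isomorphism on $H_1$ already forces isomorphisms on all lower central quotients $(P\underset{F}{*}X)/(P\underset{F}{*}X)_c \cong \Phi/\Phi_c$. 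The role of the free-factor hypothesis is to guarantee that the image of $F$ remains a free factor after passing to each lower central (and mod-$p$ lower central) quotient, which is what makes these comparisons work level by level.

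The residual nilpotence is the genuine obstacle, and it is where the single prime $p$ earns its place in the hypotheses. Rather than controlling $\bigcap_c (P\underset{F}{*}X)_c$ directly, I would prove the stronger statement that $P \underset{F}{*} X$ is residually a finite $p$-group, which implies residual nilpotence. The prime-power index makes $F$ compatible with the mod-$p$ dimension-subgroup filtrations of both factors: the free-factor property makes $F$ ``$p$-isolated'' in $X$, so its image in each mod-$p$ lower central quotient of $X$ is again a free factor, and the same structure must be propagated through $P$. One can then match the two filtrations across the amalgamation and apply Baumslag's residual-$p$ apparatus for generalized free products of (para)free groups \cite{Baum67,Baum69}. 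Were the two indices divisible by different primes this matching would break down, which is precisely why the hypothesis insists on a common prime.

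I expect two points to require the most care. The first is transporting the free-factor-of-the-commutator-closure hypothesis from the honest free factor $X_{m+1}$ to its image inside the parafree group $P = Z^m$: I must verify that the relevant boundary subgroup is suitably isolated in $Z^m$ itself and not merely in the single free factor it came from, which relies on $Z^m$ having the abelianization $\Z^{2g}$ of a rank-$2g$ free group. The second, and more serious, is the residual-$p$ step: one needs the mod-$p$ filtrations of $P$ and $X$ to induce the same filtration on $F$ at every level, and checking this compatibility beyond abelianizations is the technical heart of the argument and the step that most depends on Baumslag's parafree machinery.
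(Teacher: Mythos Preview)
The paper does not supply its own proof of this proposition: it is quoted as Theorem~3.2 of Mayland~\cite{May75} and used as a black box, so there is no in-paper argument to compare your proposal against.

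That said, your outline is a faithful reconstruction of how the Mayland--Baumslag argument runs. The inductive structure on $m$ and the reduction to a single amalgamation lemma (parafree $*_F$ free is parafree of the same rank, under the stated hypotheses) are correct, and you have correctly located the two real issues. First, the lower central quotients: once one checks that $H_1$ of the amalgam is torsion-free of rank $2g$---which is exactly where the common prime-power index enters, via the Mayer--Vietoris cokernel---Stallings' theorem does handle all higher quotients as you say. Second, residual nilpotence: upgrading to residual $p$-finiteness and invoking Baumslag's amalgamation results for parafree groups \cite{Baum67,Baum69} is the standard route, and the free-factor hypothesis on $H$ and $K$ is precisely what makes the mod-$p$ filtrations on the two sides compatible along $F$. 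Your caveat about transporting the hypothesis from the outermost free vertex group $X_{\pm m}$ into the parafree group $Z^m$ is also well placed: Baumslag's theorems require the amalgamated subgroup to be suitably isolated in the parafree factor too, and verifying this is the genuine inductive burden. In short, your sketch is correct in spirit and flags the right obstacles; filling in the details amounts to reproducing Mayland's proof.
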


The knot $J$ is \emph{rationally homologically fibered} if the induced map on homology, $i^+_h:H_1(U^+;\Q)\to H_1(M_S;\Q)$ (or equivalently $i^-_h:H_1(U^-;\Q)\to H_1(M_S;\Q)$), is an isomorphism.
Let $S_+$ be a Seifert matrix representing $i^+_h$ so that $S_-:=S_+^T$ is a Seifert matrix representing $i^-_h$.
$S_+$ is also a presentation matrix for the abelian group $X/H[X,X]$.
Similarly, $S_-$ is a presentation matrix for $X/K[X,X]$.
Thus,
\begin{equation} \label{HequalsK}
\frac{X}{H[X,X]}\cong\frac{X}{K[X,X]}.
\end{equation}

Denote the standard form of the Alexander polynomial of $J$ by $\Delta_J$.
For some non-negative integer $k$,
\[
t^k\Delta_J(t)=\det(tS_+-S_+^T)=d_0+d_1t+\cdots+d_{2g}t^{2g}.
\]
It is a well known fact that $d_i=d_{2g-i}$ (see \cite[Chapter 6]{Mur96}).

\begin{prop} \label{rhfprop}
Suppose $J$ is a knot in $S^3$.
The following statements are equivalent:
\begin{enumerate}[label=(\alph*)]
\item $J$ is rationally homologically fibered. \label{rhfrhf}
\item $|X:H[X,X]|$ is finite. \label{rhfh}
\item $|X:K[X,X]|$ is finite. \label{rhfk}
\item $\deg\Delta_J=2g$. \label{rhf2g}
\end{enumerate}
\end{prop}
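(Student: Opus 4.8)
The plan is to show that each of the four conditions is equivalent to the single algebraic statement $\det(S_+)\neq 0$, where $S_+$ is the $2g\times 2g$ Seifert matrix introduced above. Since $S_-=S_+^T$, we have $\det(S_-)=\det(S_+)$ throughout, so every statement phrased in terms of $S_+$ transfers verbatim to $S_-$; this will simultaneously handle the equivalence of \ref{rhfh} and \ref{rhfk} and the two interchangeable formulations of \ref{rhfrhf} (via $i^+_h$ and via $i^-_h$).

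For \ref{rhfrhf}, I would use that $S_+$ represents the map $i^+_h:H_1(U^+;\Q)\to H_1(M_S;\Q)$ between two $\Q$-vector spaces, each of dimension $2g$. A linear map between equidimensional vector spaces is an isomorphism precisely when its matrix is invertible, so $J$ is rationally homologically fibered if and only if $\det(S_+)\neq 0$. For \ref{rhfh} and \ref{rhfk}, I would invoke that $S_+$ is a presentation matrix for the finitely generated abelian group $X/H[X,X]$ (and $S_-$ for $X/K[X,X]$). A finitely generated abelian group presented by a square integer matrix of size $2g$ is finite exactly when that matrix has nonzero determinant, since its free rank equals $2g-\operatorname{rank}(S_+)$. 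Hence $|X:H[X,X]|<\infty \iff \det(S_+)\neq 0$ and likewise $|X:K[X,X]|<\infty \iff \det(S_-)=\det(S_+)\neq 0$, collapsing \ref{rhfh} and \ref{rhfk} onto the same determinant condition.

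For \ref{rhf2g}, I would read off the extreme coefficients of $\det(tS_+-S_+^T)=d_0+\cdots+d_{2g}t^{2g}$. Expanding the determinant, the coefficient of the top power is $d_{2g}=\det(S_+)$. Writing $\Delta_J$ in standard form $\Delta_J(t)=c_0+\cdots+c_m t^m$ with $c_0,c_m\neq 0$ and $m=\deg\Delta_J$, the identity $t^k\Delta_J(t)=\det(tS_+-S_+^T)$ shows the nonzero coefficients of the right-hand side occupy degrees $k$ through $k+m$, with $d_k=c_0$ and $d_{k+m}=c_m$. The palindromic symmetry $d_i=d_{2g-i}$ then forces the top nonzero degree $k+m$ to equal $2g-k$, i.e.\ $2k+m=2g$. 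Consequently $d_{2g}\neq 0 \iff k+m=2g \iff k=0 \iff m=2g$, so $\deg\Delta_J=2g$ is equivalent to $d_{2g}=\det(S_+)\neq 0$, completing the chain \ref{rhfrhf}$\iff$\ref{rhfh}$\iff$\ref{rhfk}$\iff$\ref{rhf2g}.

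The step I expect to require the most care is \ref{rhf2g}: the argument hinges on using the symmetry $d_i=d_{2g-i}$ correctly to locate the degree shift $k$ and to confirm that it is the leading coefficient $d_{2g}$, rather than the constant term $d_0$ (which is also $\det(S_+)$ by the evenness of $2g$), that genuinely detects $\deg\Delta_J=2g$. Everything else is a direct translation of facts already recorded above the proposition, so the substance of the proof is the single observation that all four conditions reduce to the nonvanishing of $\det(S_+)$.
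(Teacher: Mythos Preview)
Your proposal is correct and follows essentially the same approach as the paper: all four conditions are reduced to the single statement $\det(S_+)\neq 0$. The paper's proof is terser---it obtains \ref{rhfh}$\iff$\ref{rhfk} by citing the isomorphism $X/H[X,X]\cong X/K[X,X]$ rather than the determinant equality $\det(S_-)=\det(S_+)$, and for \ref{rhf2g} it simply observes $d_{2g}=d_0=\det(S_+)$---but the underlying logic is identical, and your more explicit handling of the degree shift $k$ via the palindromic symmetry is a valid (indeed, more careful) version of the same step.
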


\begin{proof}
The equivalence of \ref{rhfh} and \ref{rhfk} follows from (\ref{HequalsK}).

Since $S_+$ is a presentation matrix for $X/H[X,X]$,
$|X:H[X,X]|$ is finite if and only if $|\det(S_+)|\neq 0$.
It follows that \ref{rhfrhf} and \ref{rhfh} are equivalent.

Since $d_{2g}=d_0=\det(S_+)$, $\deg\Delta_J=2g$ if and only if $\det(S_+)\neq 0$ so \ref{rhfrhf} and \ref{rhf2g} are equivalent.
\end{proof}

\begin{prop} \label{leadcoefprop}
When $J$ is rationally homologically fibered,
\[
|X:H[X,X]|=|X:K[X,X]|=|\Delta_J(0)|.
\]
\end{prop}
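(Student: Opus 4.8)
The plan is to identify the index $|X:H[X,X]|$ with $|\det(S_+)|$, then identify $\det(S_+)$ with $\Delta_J(0)$, and finally deduce the statement for $K$ by symmetry.

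First I would use the fact, recalled just before the statement, that $S_+$ is a square ($2g \times 2g$) presentation matrix for the finitely generated abelian group $X/H[X,X]$. Since $J$ is rationally homologically fibered, Proposition \ref{rhfprop} gives that this index is finite, equivalently $\det(S_+)\neq 0$. For an abelian group presented by a square integer matrix $A$ with $\det A\neq 0$, the standard structure theory (Smith normal form) shows the group is finite of order $|\det A|$. Hence $|X:H[X,X]|=|\det(S_+)|$.

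Next I would show $\Delta_J(0)=\det(S_+)$. Evaluating the defining identity $t^k\Delta_J(t)=\det(tS_+-S_+^T)=\sum_{i=0}^{2g}d_it^i$ at $t=0$ gives $d_0=\det(-S_+^T)=(-1)^{2g}\det(S_+)=\det(S_+)$, since $2g$ is even; this is exactly the computation already recorded in the proof of Proposition \ref{rhfprop}. Because $\det(S_+)\neq 0$, the constant term $d_0$ is nonzero, which forces $k=0$ (otherwise $t^k\Delta_J(t)$ would vanish at $t=0$). Therefore $\Delta_J(t)=\sum_i d_it^i$, so $\Delta_J(0)=d_0=\det(S_+)$. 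Combining the two computations yields $|X:H[X,X]|=|\det(S_+)|=|\Delta_J(0)|$.

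Finally, for $K$ I would either invoke the isomorphism (\ref{HequalsK}), which gives $|X:K[X,X]|=|X:H[X,X]|$ immediately, or run the identical argument using that $S_-=S_+^T$ presents $X/K[X,X]$ together with $\det(S_+^T)=\det(S_+)$. The main obstacle here is slight: everything reduces to two standard facts, namely that the order of an abelian group equals the absolute value of the determinant of a square presentation matrix, and the evaluation of $\det(tS_+-S_+^T)$ at $t=0$. The only point needing a little care is ruling out the normalizing factor $t^k$, i.e.\ confirming $k=0$ so that $d_0$ genuinely equals $\Delta_J(0)$ rather than an intermediate coefficient; this follows at once from $d_0=\det(S_+)\neq 0$.
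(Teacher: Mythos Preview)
Your proof is correct and follows essentially the same approach as the paper: the paper's argument is simply the chain $|X:H[X,X]|=|\det(S_+)|=|\Delta_J(0)|$ together with an appeal to (\ref{HequalsK}), and you have supplied precisely the justifications for each equality that the paper leaves implicit. In particular, your explicit verification that $k=0$ (so that $d_0$ really is $\Delta_J(0)$) is a detail the paper silently assumes.
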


\begin{proof}
When $J$ is rationally homologically fibered,
\[
|X:H[X,X]|=|\det(S_+)|=|\Delta_J(0)|
\]
so the proposition follows from (\ref{HequalsK}). 
\end{proof}

For each non-negative $m$,
\[
\frac{Z^{m+1}}{Z^m[Z^{m+1},Z^{m+1}]}\cong
\frac{X}{H[X,X]}\times\frac{X}{K[X,X]}
\]
so when $J$ is rationally homologically fibered,
\begin{equation} \label{quot_simp}
|Z^{m+1}:Z^m[Z^{m+1},Z^{m+1}]|=|X:H[X,X]||K:H[X,X]|=\Delta_J(0)^2
\end{equation}
by Proposition \ref{leadcoefprop}.

The Seifert surface $S$ is said to \emph{satisfy the free factor property} if $H$ and $K$ are free factors of $H[X,X]$ and $K[X,X]$ respectively.
Note that this property is independent of the orientation of $S$.
A sufficient condition for the residual torsion-free nilpotence of $G$ can be summarized as follows.

\begin{prop} \label{mayanaprop}
Suppose $J$ is a rationally homologically fibered knot in $S^3$ with unknotted minimum genus Seifert surface $S$. If $S$ satisfies the free factor property and $|\Delta_J(0)|$ is a prime power, then the commutator subgroup, $G$, is residually torsion-free nilpotent.
\end{prop}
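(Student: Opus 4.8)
The plan is to assemble the preliminary results into the exact hypotheses of Baumslag's Proposition \ref{baumprop}, applied to the ascending chain $Z^0<Z^1<Z^2<\cdots$ whose union is $G$. Since $S$ is an unknotted minimal genus Seifert surface, the Brown--Crowell decomposition (\ref{cycliccoverprop}) applies, and the subgroups $Z^m$ defined in (\ref{zs}) form an ascending chain of embedded subgroups with $G=\bigcup_m Z^m$. Thus it suffices to verify the two hypotheses of Proposition \ref{baumprop}: that each $Z^m$ is parafree of a common rank, and that $|Z^{m+1}:Z^m[Z^{m+1},Z^{m+1}]|$ is finite for every nonnegative $m$.

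For the parafreeness, I would invoke Mayland's Proposition \ref{mayprop}. Its hypotheses are precisely the free factor property, which is assumed, together with the requirement that $|X:H[X,X]|=|X:K[X,X]|=p^l$ for some prime $p$ and nonnegative integer $l$. To confirm the latter, I first note that $J$ being rationally homologically fibered gives, via Proposition \ref{rhfprop}, that both indices are finite; then Proposition \ref{leadcoefprop} identifies both with $|\Delta_J(0)|$. Since $|\Delta_J(0)|$ is assumed to be a prime power, this common value has the required form $p^l$, so Mayland's hypotheses are met and every $Z^m$ is parafree of rank $2g$. For the finite index condition, equation (\ref{quot_simp}) already records that $|Z^{m+1}:Z^m[Z^{m+1},Z^{m+1}]|=\Delta_J(0)^2$ whenever $J$ is rationally homologically fibered, and this is finite. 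With both hypotheses of Proposition \ref{baumprop} verified, I conclude that $G$ is residually torsion-free nilpotent.

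This argument is essentially bookkeeping rather than new mathematics, since all the substantive content lives in the cited results; the only point demanding care is matching the index and rank conditions across the three propositions. Specifically, I must ensure that the common rank $2g$ furnished by Mayland is genuinely independent of $m$, so that Baumslag's \emph{same rank} hypothesis is satisfied, and that the prime-power value $p^l$ in Mayland's index condition is literally the quantity $|\Delta_J(0)|$ computed in Proposition \ref{leadcoefprop}. Once these identifications are in place, the conclusion follows at once.
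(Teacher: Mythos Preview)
Your proposal is correct and follows essentially the same approach as the paper's own proof: assemble the chain $Z^m$, use Proposition~\ref{leadcoefprop} to identify $|X:H[X,X]|=|X:K[X,X]|=|\Delta_J(0)|$ as a prime power so that Proposition~\ref{mayprop} yields parafreeness of rank $2g$, then invoke (\ref{quot_simp}) for the finite-index condition and conclude via Proposition~\ref{baumprop}. If anything, you are slightly more explicit than the paper in noting that Proposition~\ref{rhfprop} guarantees finiteness before Proposition~\ref{leadcoefprop} computes the value.
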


\begin{proof}
Suppose $J$ is a rationally homologically fibered with unknotted minimum genus Seifert surface $S$ satisfying the free factor property,
and suppose $|\Delta_J(0)|$ is a prime power.

Define $Z^m$ for each non-negative integer $m$ as in (\ref{zs}).
By Proposition \ref{leadcoefprop},
$|X:H[X,X]|$ and $|K:H[X,X]|$ are prime powers since $J$ is rationally homologically fibered.
Thus, by Proposition \ref{mayprop}, each $Z^m$ is parafree of rank twice the genus of $J$.

By (\ref{quot_simp}), $|Z^{m+1}:Z^m[Z^{m+1},Z^{m+1}]|=\Delta_J(0)^2$ so $|Z^{m+1}:Z^m[Z^{m+1},Z^{m+1}]|$ is finite.
Therefore, by Proposition \ref{baumprop}, $G$ is residually torsion-free nilpotent.
\end{proof}

\subsection{Pseudo-Alternating Knots} \label{subsecpseudoalt}

A \emph{special alternating diagram} is an alternating link diagram in which all of the crossings have the same sign. 
Any link with such a diagram is called a \emph{special alternating link}.
The Seifert surface described by performing Seifert's algorithm on special alternating diagram is a \emph{primitive flat surface}.
A \emph{generalized flat surface} is any surface which can be obtained by combining some number of primitive flat surfaces by Murasugi sums.
A link which bounds a generalized flat surface is a \emph{pseudo-alternating link}.
Alternating links are pseudo-alternating links.
However, all torus links, many of which are not alternating, are also pseudo-alternating links.

Pseudo-alternating knots are rationally homologically fibered and bound surfaces satisfying the free factor condition \cite[Theorem 2.5]{MayMur76}.
Therefore, the knot group of a pseudo-alternating knot, whose Alexander polynomial has a prime power leading coefficient, has residually torsion-free nilpotent commutator subgroup.

\section{Genus One Pretzel Knots} \label{secgenusone}

Let $J$ be the $P(2p+1,2q+1,2r+1)$ pretzel knot for some integers $p$, $q$, and $r$ with $1\leq q\leq r$ and $p\neq -1$ or $0$.
Let $S$ be the unknotted genus 1 surface depicted in Figure \ref{figgenusone}, which we refer to as the \emph{standard Seifert surface} of $J$.
For the genus one pretzel knots which aren't two-bridge knots,
the standard Seifert surface is the unique Seifert surface of minimal genus up to isotopy and \cite{GodIsh06}.

\begin{figure}[b]
\includegraphics[scale=1.0]{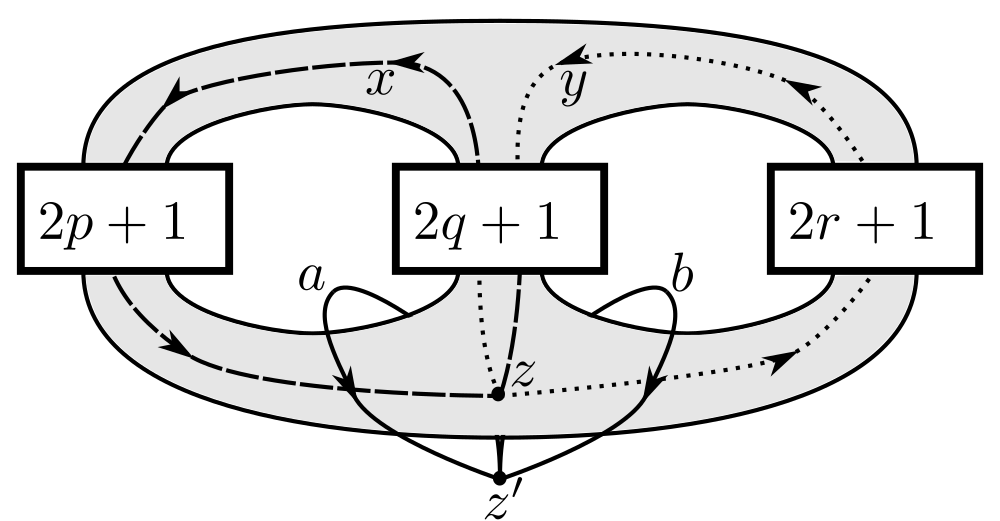}
\caption{The Seifert surface $S$ of $P(2p+1,2q+1,2r+1)$.}
\label{figgenusone}
\end{figure}

In this section, we show analyze when $S$ satisfies the free factor property. When $p>0$, $P(2p+1,2q+1,2r+1)$ is an alternating knot,
and thus, $P(2p+1,2q+1,2r+1)$ is pseudo-alternating.
However, this is not true when $p\leq-2$.

\begin{prop} \label{pseudoaltprop}
When $1\leq q\leq r$ and $p\leq-2$, the pretzel knot $P(2p+1,2q+1,2r+1)$ is not a pseudo-alternating knot.
\end{prop}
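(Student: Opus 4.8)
The plan is to argue by contradiction: suppose $J=P(2p+1,2q+1,2r+1)$ with $p\le-2$ and $1\le q\le r$ is pseudo-alternating, so that by definition it bounds a generalized flat surface $F$. The first step is to pin down $F$ up to isotopy. Since $F$ is assembled from primitive flat surfaces by Murasugi sums, and since the Murasugi sum of minimal genus Seifert surfaces is again minimal genus by Gabai's work \cite{Gab86}, the surface $F$ is a minimal genus Seifert surface for $J$, hence has genus one. As the genus one pretzel knots under consideration are not two-bridge, their minimal genus Seifert surface is unique up to isotopy (as recalled at the start of this section, following \cite{GodIsh06}), so $F$ is isotopic to the standard surface $S$. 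It therefore suffices to show that $S$ cannot carry the structure of a generalized flat surface.

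Next I would classify the Murasugi decompositions of a genus one flat surface by an Euler characteristic count. A connected spanning surface of a genus one knot has $\chi=-1$, and a Murasugi sum along a $2n$-gon satisfies $\chi(F_1\ast F_2)=\chi(F_1)+\chi(F_2)-1$. Since every connected surface with nonempty boundary other than a disk has $\chi\le0$, with $\chi=0$ occurring among primitive flat surfaces only for a Hopf band, the only possibilities are that $S$ is a single primitive flat surface, or that $S$ is a plumbing of exactly two Hopf bands. In the latter case the Seifert matrix is block triangular with $\pm1$ diagonal blocks, so $\det V=N=\pm1$ and $J$ is fibered of genus one; but the only genus one fibered knots are the trefoil and the figure eight, neither of which equals $P(2p+1,2q+1,2r+1)$ with $p\le-2$ and $1\le q\le r$ by the classification of pretzel knots. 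This eliminates the Hopf band case.

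The remaining, and main, case is that $S$ is a single primitive flat surface, i.e. that $J$ is special alternating. Here I would exploit that a special alternating knot is a positive or a negative knot and that the symmetrized Seifert form of a primitive flat surface is definite; for the standard surface one computes $V+V^{T}=\left(\begin{smallmatrix}2(p+q+1)&-(2q+1)\\-(2q+1)&2(q+r+1)\end{smallmatrix}\right)$, whose lower-right entry $2(q+r+1)$ is positive, so this form can only be \emph{positive} definite and $J$ could only be a negative special alternating knot. The hard part will be converting this into a contradiction with the mixed signs $2p+1\le-3<3\le2q+1$: the plan is to use that a genus one primitive flat surface is isotopic to the standard pretzel surface of an all-equal-sign pretzel $P(2a+1,2b+1,2c+1)$, so that uniqueness of the minimal genus surface together with the classification of pretzel knots would force the unordered triple $\{2p+1,2q+1,2r+1\}$ to consist of integers of a single sign, a contradiction. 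The chief obstacle is precisely this last structural input---identifying genus one primitive flat surfaces with same-sign pretzel surfaces, equivalently ruling out that a mixed-sign genus one pretzel is a positive or a negative knot---since the Alexander polynomial and the signature alone do not detect it, as the positive definite examples such as $P(-3,5,11)$ (with $N=2$) already show.
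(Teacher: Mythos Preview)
Your Euler-characteristic dichotomy is the same as the paper's, and your treatment of the ``sum of two pieces'' case is in the right spirit. But the proof is not complete: the special alternating case is left open, and you yourself identify it as the chief obstacle. Your plan there---to classify genus-one primitive flat surfaces as same-sign pretzel surfaces and then invoke uniqueness---is not carried out, and as you note, the Seifert form alone will not do it for you (indeed $V+V^{T}$ is positive definite for $P(-3,5,11)$).

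The missing idea, which closes this case in one stroke and also streamlines the other, is simply to prove that $J$ is not alternating. The standard pretzel diagram of $P(2p+1,2q+1,2r+1)$ with $p\le-2$ and $1\le q\le r$ has minimal crossing number by Lickorish--Thistlethwaite \cite{LicThis88}, and by Kauffman--Murasugi--Thistlethwaite any minimal-crossing diagram of an alternating knot is alternating; since the standard diagram visibly is not, $J$ is non-alternating. This immediately kills the ``$S$ is a single primitive flat surface'' case, since special alternating implies alternating. In the remaining case you then do not need the Hopf-band restriction or fiberedness at all: a Murasugi sum of any two annuli has as boundary a double twist knot, and double twist knots are alternating---contradiction. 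This is exactly the paper's argument.

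A secondary remark: your assertion that a primitive flat surface with $\chi=0$ must be a Hopf band is stated without proof; you would need to argue that a connected special alternating diagram with $s=c$ Seifert circles and crossings is forced to be the two-crossing Hopf link diagram. Whether or not this can be made rigorous, it becomes unnecessary once you have non-alternating in hand.
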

    
\begin{proof}
    Suppose $P(2p+1,2q+1,2r+1)$ is a pseudo-alternating knot.
    When $1\leq q\leq r$ and $p\leq-2$, the diagram given in Figure \ref{figpretzel} has a minimal number of crossings \cite[Theorem 10]{LicThis88}.
    Since this diagram is not alternating, $P(2p+1,2q+1,2r+1)$ cannot be alternating by a theorem of Kauffman, Murasugi, and Thistlethwaite \cite{Kauf87,Kauf88,Mur87,This87}.
    In particular, $P(2p+1,2q+1,2r+1)$ is not special alternating.
    Thus, $P(2p+1,2q+1,2r+1)$ must be the boundary of a surface $S$ which is the Murasugi sum of two generalized flat surfaces, $S_1$ and $S_2$, which aren't disk.

    By Gabai \cite{Gab85}, $S$ must be a minimal genus Seifert surface so $\chi(S)=-1$.
    Analyzing the effect of a Murasugi sum on the Euler characteristic yields
    \[
        -1=\chi(S)=\chi(S_1)+\chi(S_2)-1.
    \]
    Since $S_1$ and $S_2$ are not disks, neither $S_1$ or $S_2$ has positive Euler characteristic.
    It follows that $\chi(S_1)=\chi(S_2)=0$
    so $S_1$ and $S_2$ are both annuli.
    
    The boundary of a Murasugi sum of two annuli is a double twist knot which is alternating.
    Thus, $P(2p+1,2q+1,2r+1)$ is alternating, which is a contradiction.
\end{proof}

Since $J$ is pseudo-alternating when $p\geq 0$,
we will only need to focus on the case when $p$ is negative.

\subsection{Mayland's Technique for Genus One Pretzel Knots}

Define $M_J$, $M_S$, $X$, $H$ and $K$ as in section \ref{secmayland}.
Here we offer a concrete description of the maps on fundamental groups $i^+_*$ and $i^-_*$ for genus one pretzel knots.
This is the same discription used by Crowell and Trotter in \cite{CroTrot63}.
Choose a base point $z$ on the lower part of $S$, and let $x$ and $y$ be the classes generating $\pi_1(S,z)$ represented by the loops indicated in Figure \ref{figgenusone}.
Let $z^+$ and $z^-$ be push-offs of $z$ of each side of $S$.
Let $z'$ be the base point of $M_S$ obtained by shifting $z$ tangentially along $S$ through $\partial S$.
Let $\delta^+$ and $\delta^-$ be arcs connecting $z$ to $z^+$ and $z^-$ respectively; see Figure \ref{figbasepoints}.
Finally, let $a$ and $b$ be the indicated classes generating $\pi_1(M_S,z')$.

\begin{figure}[b]
\includegraphics[scale=1.0]{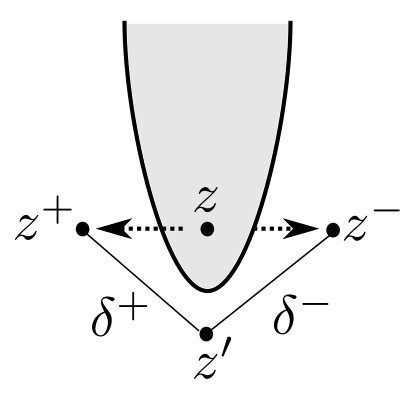}
\caption{Isotopy of base points}
\label{figbasepoints}
\end{figure}

By slightly isotoping elements of $\pi_1(S,z)$ off of $S$, $\pi_1(U^+,z^+)$ and $\pi_1(U^-,z^-)$ are canonically identified to $\pi_1(S,z)$ which is a rank two free group, $F$, generated by $x$ and $y$.
The group $X:=\pi_1(M_S,z')$ is a rank two free group generated by $a$ and $b$.
The map $i^+_*:F\to X$ takes a class $[\gamma]$ in $\pi_1(U^+,z^+)=F$ to the class $[\delta^+*\gamma*(-\delta^+)]$ in $\pi_1(M_S,z')=X$.
Likewise, the map $i^-_*:F\to X$ takes $[\gamma]$ to $[\delta^-*\gamma*(-\delta^-)]$.

With these choices, we define the elements
\begin{equation} \label{handk}\begin{array}{lcl}
\alpha_H:=i^+_*(x)=(b^{-1}a)^{q+1}a^{p} & \text{\hspace{1cm}} & \alpha_K:=i^-_*(x)=(ab^{-1})^{q}a^{p+1}\\
\beta_H:=i^+_*(y)=b^{r+1}(a^{-1}b)^{q} & & \beta_K:=i^-_*(y)=b^{r}(ba^{-1})^{q+1}
\end{array}
\end{equation}
so that
\[
H=\langle \{\alpha_H,\beta_H\}\rangle\text{\hspace{0.5cm}}K=\langle \{\alpha_K,\beta_K\}\rangle .
\]
Thus, the Seifert matrices for $i^+_*$ and $i^-_*$ are
\begin{equation} \label{seifmat}
S_+=\left(
\begin{array}{cc}
p+q+1 & -q-1\\
-q & q+r+1
\end{array}
\right)
\text{ and }
S_-=\left(
\begin{array}{cc}
p+q+1 & -q\\
-q-1 & q+r+1
\end{array}
\right)
.
\end{equation}

Let $N=\det S_+=\det S_-$.
Up to multiplication by a signed power of $t$, the Alexander polynomial of $J$ is
\[
\Delta_J(t)=Nt^2+(1-2N)t+N.
\]
When $N\neq0$, $J$ is rationally homologically fibered by Proposition \ref{rhfprop}.
Simply considering the integer $N$ can provide useful information.

\begin{prop} \label{rtfnobstructprop}
When $N=0$, $G$ is not residually torsion-free nilpotent.
\end{prop}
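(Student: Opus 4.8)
The plan is to deduce this as a direct consequence of Proposition \ref{trivial}. The only thing that needs checking is that the hypothesis $N=0$ forces $J$ to have trivial Alexander polynomial; once that is verified, the general obstruction of Proposition \ref{trivial} applies verbatim.

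First I would substitute $N=0$ into the expression
\[
\Delta_J(t)=Nt^2+(1-2N)t+N
\]
obtained above from the Seifert matrices in (\ref{seifmat}). This collapses the degree-two polynomial to $\Delta_J(t)=t$, which agrees with $1$ up to multiplication by a signed power of $t$. Hence $J$ has trivial Alexander polynomial. As a consistency check, the Alexander normalization $\Delta_J(1)=N+(1-2N)+N=1$ holds for every value of $N$, so the substitution at $N=0$ is the only place where the hypothesis is actually used.

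Having established that $\Delta_J$ is trivial, I would invoke Proposition \ref{trivial} to conclude that the commutator subgroup $G$ of $\pi_1(M_J)$ is not residually torsion-free nilpotent. There is no genuine obstacle here: the substantive work---showing that a trivial Alexander polynomial forces $G=[G,G]$, so that every lower central quotient of any proper quotient of $G$ is nontrivial---has already been carried out in the proof of Proposition \ref{trivial}. The present statement is simply the specialization of that obstruction to the one-parameter family of genus one pretzel knots, where triviality of $\Delta_J$ is detected by the single integer $N$. If one wished to avoid the citation entirely, one could instead observe that $N=0$ makes $S_+$ singular and re-run the homological argument of Proposition \ref{trivial} in these coordinates, but this merely reproduces that proof and the direct appeal is cleaner.
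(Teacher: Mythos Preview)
Your proposal is correct and follows the same approach as the paper: substitute $N=0$ to see that $\Delta_J$ is trivial, then invoke Proposition~\ref{trivial}. The paper's proof is just this one-line observation, and your additional remarks about the normalization and the alternative via singularity of $S_+$ are accurate but unnecessary elaborations.
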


\begin{proof}
$\Delta_J(t)=1$ when $N=0$ so $G$ cannot be residually nilpotent by Proposition \ref{trivial}.
\end{proof}

\begin{prop} \label{ffpobstructprop}
If $|N|=1$, then the Standard Seifert surface $S$ does not satisfy the free factor property.
\end{prop}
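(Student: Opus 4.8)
The plan is to reduce the free factor property, under the hypothesis $|N|=1$, to the statement that one of the peripheral subgroups equals all of $X$, and then derive a contradiction with the fact that $J$ is not a two-bridge knot. First I would invoke Proposition \ref{leadcoefprop}: since $\Delta_J(0)=N$ and $|N|=1$, we get $|X:H[X,X]|=|X:K[X,X]|=|\Delta_J(0)|=1$, so that $H[X,X]=K[X,X]=X$. Consequently the free factor property, which asserts that $H$ and $K$ are free factors of $H[X,X]$ and $K[X,X]$, is in this case exactly the assertion that $H$ and $K$ are free factors of $X$ itself.

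Next I would argue that a proper full-rank subgroup cannot be a free factor. Because $|N|=1$, the image of $H$ in the abelianization $X/[X,X]\cong\Z^2$ is everything, so $H$ surjects onto $\Z^2$; since $H$ is a two-generator subgroup of the free group $X$, it is free of rank at most $2$, and the surjection onto $\Z^2$ forces $H\cong F_2$. If $H$ were a free factor, writing $X=H*D$ and abelianizing gives $\Z^2\cong H^{\mathrm{ab}}\oplus D^{\mathrm{ab}}\cong\Z^2\oplus D^{\mathrm{ab}}$, whence $D^{\mathrm{ab}}=0$ and the free group $D$ is trivial, i.e. $H=X$. The identical argument applies to $K$. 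Thus if $S$ satisfied the free factor property we would have $H=K=X$, meaning both induced maps $i^+_*$ and $i^-_*$ are isomorphisms onto $X$.

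The final, and main, step is to rule this out. If $i^+_*$ and $i^-_*$ are both isomorphisms, then in the amalgamated description (\ref{cycliccoverprop}) each amalgamating subgroup $F=H_n=K_{n+1}$ coincides with the whole factor $X_n$, so all the factors are identified and $G\cong X$ is a finitely generated free group; by Stallings' fibration criterion $J$ is then a fibered knot, with fiber the genus one surface $S$. But the only genus one fibered knots in $S^3$ are the trefoil and the figure-eight, both of which are two-bridge. This contradicts the standing hypotheses $1\le q\le r$ and $p\neq 0,-1$, under which none of $p,q,r$ equals $0$ or $-1$, so that $J$ is not two-bridge. Hence $H=K=X$ is impossible, and $S$ does not satisfy the free factor property.

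I expect the last paragraph to be the delicate point, since here $|N|=1$ makes $\Delta_J$ monic and so the Alexander polynomial alone gives no obstruction to fibering; one genuinely needs the classification of genus one fibered knots (equivalently, the non-fiberedness of these pretzel knots). Should a more self-contained argument be preferred, I would instead prove $H\neq X$ directly by showing $\{\alpha_H,\beta_H\}$ is not a free basis of $X$, using the criterion that a pair $\{u,v\}$ generates $F_2$ if and only if $[u,v]$ is conjugate to $[a,b]^{\pm1}$; here the bulk of the work is the routine but parameter-dependent estimate that the cyclically reduced length of $[\alpha_H,\beta_H]$ exceeds $4$ for all admissible $p,q,r$ with $q\ge 1$.
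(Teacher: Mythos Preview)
Your proposal is correct and follows essentially the same route as the paper: use Proposition~\ref{leadcoefprop} to get $H[X,X]=K[X,X]=X$, argue that a rank-two free factor of $X$ must equal $X$, deduce $i^\pm_*$ are isomorphisms, invoke Stallings to conclude $J$ is genus-one fibered, and contradict the classification of genus-one fibered knots. The only cosmetic differences are that the paper takes $\operatorname{rank}H=2$ directly from injectivity of $i^+_*$ rather than via abelianization, and passes straight to the short exact sequence $1\to X\to\pi_1(M_J)\to\Z\to1$ instead of explicitly collapsing the amalgamated product.
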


\begin{proof}
Let $S$ be the standard Seifert surface of $J$, and
define $X$, $H$, and $K$ as in section \ref{secmayland}.
Each of these are rank 2 free groups.
Suppose $S$ satisfies the free factor property.

When $|N|=1$, $X/H[X,X]\cong X/K[X,X]\cong 1$ by Proposition \ref{leadcoefprop}
so $X=H[X,X]=K[X,X]$.
Since $H$ is a free factor of $H[X,X]$ and both are rank 2 free groups, $H=H[X,X]=X$.
Similarly, since $K$ is a free factor of $K[X,X]$ and both are rank 2 free groups, $K=X$.
This implies that $i^+_*$ and $i^-_*$ are isomorphisms.
Thus, $\pi_1(M_J)$ is an extension $\Z$ described by the following short exact sequence.
\[
1 \to X \to \pi_1(M_J) \to \Z \to 1
\]

The Stallings fibration theorem \cite{Stal61} implies that $J$ is a genus 1 fibered knot \cite{Stal61}.
However, the only genus 1 fibered knots are the trefoil and the figure 8 knot \cite{BurZie67,Gon70} which is a contradiction since we are assuming $J$ is not a two-bridge knot.
\end{proof}

In light of Proposition \ref{mayanaprop}, to prove the commutator subgroup of $\pi_1(M_J)$ is residually torsion-free nilpotent, it is sufficient to show $S$ satisfies the free factor property.

\subsection{Outline of Procedure}

In each case, we use the same basic procedure, outlined below, to analyze whether or not $S$ satisfies the free factor property.

\begin{enumerate}
\item Find a presentation matrix for $X/H[X,X]$ of the form
\[
\left(
\begin{array}{cc}
u & v \\
0 & w
\end{array}
\right)
\text{ or }
\left(
\begin{array}{cc}
u & 0 \\
v & w
\end{array}
\right)
\]
using row operations.
Note, $u$ and $w$ can always be made positive.
Thus, $X/H[X,X]$ is isomorphic to $(\Z/u\Z)\times(\Z/w\Z)$.
The $\Z/u\Z$ factor is generated by the class of $a$ in $X/H[X,X]$, and
the $\Z/w\Z$ factor is generated by the class of $b$.

\item Since $X/H[X,X]$ is abelian, the set $\mathcal{C}$, defined below, is a set of coset representatives of $H[X,X]$.
\[
\mathcal{C}=\{a^kb^l:0\leq k <u,0\leq l< w\}
\]
Given $x\in X$, denote by $\overline{x}$, the coset representative of $x$ in $\mathcal{C}$.
Define
\[
x_{c,x}:=cx(\overline{cx})^{-1}
\]
where $c\in \mathcal{C}$ and $x\in\{a,b\}$.
From this, we find the following free basis for $H[X,X]$ using the Reidemeister-Schreier method; see \cite{KMS66} for details.
\[
	\mathcal{B}=\{x_{c,x}:c\in \mathcal{C}, x\in\{a,b\},x_{c,x}\neq 1\}
\]

\item Use the Reidemeister-Schreier rewriting process to rewrite the generating set of $H$ from (\ref{handk}).
A word $\alpha\in H$, where $\alpha=\alpha_1^{s_1}\cdots \alpha_k^{s_k}$ with $\alpha_i\in\{a,b\}$ and $s_i=\pm 1$, can be rewritten as
\[
\alpha=x_{c_1,\alpha_1}^{s_1}\cdots x_{c_k,\alpha_k}^{s_k}
\]
where
\[
c_i=
\left\{
\begin{array}{ll}
\overline{\alpha_1\cdots \alpha_{i-1}} & \text{when }s_i=1\\
\overline{\alpha_1\cdots \alpha_{i}} & \text{when }s_i=-1
\end{array}
\right.
.
\]
\item Determine if the generating set of $H$ can be extended to a free basis of $H[X,X]$.

\item Repeat this procedure for $K$.
\end{enumerate}

When the free bases of $H$ and $K$ can be extended to free bases of $H[X,X]$ and $K[X,X]$ respectively,
$S$ satisfies the free factor property.
If the chosen basis of either $H$ or $K$ fails to extend then $S$ cannot satisfy the free factor property.

\subsection{Knots Where the Standard Seifert Surface Satisfies the Free Factor Property}

\begin{lem} \label{lemrandomcases}
If $J$ is $P(-5,7,7)$ or $P(-5,7,9)$ then $S$ satisfies the free factor property.
\end{lem}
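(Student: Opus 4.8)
The plan is to follow verbatim the five-step procedure set out just above the lemma, carrying it out by hand for each of the two knots. First I would pin down the parameters: both knots have $p=-3$ and $q=3$, with $r=3$ for $P(-5,7,7)$ and $r=4$ for $P(-5,7,9)$, so that $N=\det S_+$ equals $-5$ and $-4$ respectively. In particular $N\neq 0$, so by Proposition~\ref{rhfprop} each knot is rationally homologically fibered and the machinery applies. Using $S_+$ as a presentation matrix for $X/H[X,X]$ and clearing the first column by a single row operation puts it in triangular form with diagonal $(1,-N)$, so $u=1$, $w=|N|$, and $X/H[X,X]\cong\Z/|N|\Z$ is generated by the class of $b$, with $\bar a=4\bar b$ (as $q=3$). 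The Schreier transversal is then $\mathcal{C}=\{1,b,\ldots,b^{|N|-1}\}$.

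Next I would run Reidemeister--Schreier to produce the explicit free basis $\mathcal{B}$ of $H[X,X]$ (of rank $1+|N|$, by Nielsen--Schreier): the generators are $a_j:=b^j a\,\overline{b^j a}^{\,-1}$ for $0\le j<|N|$ together with the single surviving $b$-generator $\beta:=b^{|N|}$, while all other $x_{c,b}$ collapse to the identity because $u=1$. Then I would apply the rewriting rule of step~(3) to $\alpha_H=(b^{-1}a)^{q+1}a^{p}$ and $\beta_H=b^{r+1}(a^{-1}b)^{q}$, tracking the coset of each prefix via the linear form $(k,l)\mapsto 4k+l\pmod{|N|}$, to express both generators of $H$ as words in $\mathcal{B}$. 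I would then repeat the entire computation for $K$, whose generators are $\alpha_K=(ab^{-1})^{q}a^{p+1}$ and $\beta_K=b^{r}(ba^{-1})^{q+1}$ and whose presentation matrix is $S_-$.

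The crux is step~(4): showing that the rewritten pair $\{\alpha_H,\beta_H\}$ extends to a free basis of $H[X,X]$, and likewise for $K$. My approach is a sequence of Nielsen/Tietze pivots: whenever one of the two words contains a basis generator $g\in\mathcal{B}$ exactly once (total exponent $\pm 1$), I can solve for $g$ and replace it by that word, obtaining a new free basis in which the word is itself a generator; substituting back eliminates $g$ from the other word and exposes a fresh single-occurrence generator to pivot on. For $P(-5,7,7)$ the rewriting gives $\alpha_H=\beta^{-1}a_4 a_2 a_0 a_4^{-1}a_0^{-1}$ and $\beta_H=a_0^{-1}a_2^{-1}a_4^{-1}\beta$; pivoting first on $\beta$ via $\beta_H$ collapses $\alpha_H$ to $\beta_H^{-1}a_4^{-1}a_0^{-1}$, after which a pivot on $a_4$ shows that $\{a_0,a_1,a_2,a_3,\alpha_H,\beta_H\}$ is a free basis. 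The same two-pivot pattern works for $P(-5,7,9)$.

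I expect the main obstacle to be bookkeeping rather than conceptual: the rewriting is mechanical but error-prone (prefix cosets must be reduced correctly modulo $|N|$, and trivial generators discarded), and one must then locate a workable order of pivots. There is no abstract difficulty, since primitivity in the abelianization $H[X,X]^{\mathrm{ab}}\cong\Z^{1+|N|}$ is easy to verify first as a sanity check (the relevant $2\times 2$ minor is $\pm 1$), and the explicit pivots then upgrade this to the genuine free-factor statement. Once both $H$ and $K$ are exhibited as free factors of $H[X,X]$ and $K[X,X]$ for each knot, $S$ satisfies the free factor property by definition, completing the lemma.
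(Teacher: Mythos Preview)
Your proposal is correct and follows essentially the same route as the paper: compute the Schreier basis of $H[X,X]$ (resp.\ $K[X,X]$) from the cyclic quotient, rewrite $\alpha_H,\beta_H$ (resp.\ $\alpha_K,\beta_K$), and perform two Nielsen pivots to exhibit $\{\alpha_H,\beta_H\}$ as part of a free basis. Your explicit rewriting for $H$ in the $P(-5,7,7)$ case agrees with the paper's up to the harmless basis change $a_0=x_0x_5^{-1}$ (your $a_j$ are the raw Schreier generators, the paper multiplies some of them on the right by $b^{\pm 5}$), and your pivot sequence $\beta\rightsquigarrow\beta_H$, then $a_4\rightsquigarrow\alpha_H$, is exactly the paper's observation that $\alpha_H=\beta_H^{-1}x_4^{-1}x_5x_0^{-1}$. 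One trivial slip: the triangular diagonal is $(1,N)$, not $(1,-N)$, since $N<0$ here; this does not affect anything.
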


\begin{proof}
Suppose $J$ is $P(-5,7,7)$.
From (\ref{handk}), we have that
\[
\begin{array}{lcl}
\alpha_H=(b^{-1}a)^4a^{-3} & \text{\hspace{1cm}} & \alpha_K=(ab^{-1})^{3}a^{-2}\\
\beta_H=b^4(a^{-1}b)^3 & & \beta_K=b^{3}(ba^{-1})^4
\end{array}.
\]

The abelian group $X/H[X,X]$ has presentation matrix
\[
\left(
\begin{array}{cc}
1 & -4 \\
-3 & 7
\end{array}
\right)
\]
which becomes
\[
\left(
\begin{array}{cc}
1 & 1 \\
0 & 5
\end{array}
\right)
\]
after row operations.

From this, we get $\mathcal{C}=\{1,b,b^2,b^3,b^4\}$ as a set of coset representatives of $H[X,X]$.
We apply Reidemeister-Schreier to obtain the following free basis of $H[X,X]$.
\[
	\mathcal{B}=\{ab,ba,b^2ab^{-1},b^3ab^{-2},b^4ab^{-3},b^5\}
\]
Label the basis elements as follows: $x_k:=b^kab^{1-k}$ for $0\leq k\leq 4$ and $x_5:=b^5$.

Now, we can rewrite $\alpha_H$ and $\beta_H$ in terms of $\mathcal{B}$.
\begin{align*}
	\alpha_H = &(b^{-5}) (b^4ab^{-3}) (b^2ab^{-1}) (ab) (b^{-5}) (b^3a^{-1}b^{-4}) (b^{5}) (b^{-1}a^{-1})\\
	=& x_5^{-1} x_4 x_2 x_0 x_5^{-1} x_4^{-1} x_5 x_0^{-1}
\end{align*}
and
\[
	\beta_H = (b^5) (b^{-1}a^{-1}) (ba^{-1}b^{-2}) (b^3a^{-1}b^{-4}) (b^5)=x_5 x_0^{-1} x_2^{-1} x_4^{-1} x_5.
\]
Thus,
\[
    \alpha_H = \beta_H^{-1} x_4^{-1} x_5 x_0^{-1}
\]
so
\[ 
    x_4=x_5 x_0^{-1} \alpha_H^{-1} \beta_H^{-1}
\]
and
\[
    x_2=\beta_H \alpha_H x_0 \beta_H^{-1} x_5 x_0^{-1}.
\]
Therefore, the set
\[
\{\alpha_H,\beta_H,x_0,x_1,x_3,x_5\}
\]
is a generating set of six elements for $H[X,X]$, and thus, is a free basis.
It follows that
\[
	H[X,X]=H*\{x_0,x_1,x_3,x_5\}
\]
so $H$ is a free factor of $H[X,X]$.

After row reductions, $X/K[X,X]$ has presentation matrix
\[
\left(
\begin{array}{cc}
1 & -3 \\
0 & 5
\end{array}
\right)
.
\]

From this we get a free basis of $K[X,X]$ as follows.
\[
x_k:=\left\{
\begin{array}{ll}
b^kab^{-3-k}& \text{for }0\leq k\leq 1 \\
b^kab^{2-k}& \text{for }2\leq k\leq 4 \\
b^5 & \text{for }k=5
\end{array} \right.
\]
Rewriting $\alpha_K$ and $\beta_K$, we get
\[
	\alpha_K = (ab^{-3}) (b^2a) (b^{-5}) (b^4ab^{-2}) (ba^{-1}b^{-3}) (b^3a^{-1})= x_0 x_2 x_5^{-1} x_4 x_3^{-1} x_0^{-1}
\]
and
\[
	\beta_K = (b^4a^{-1}b^{-1}) (b^2a^{-1}b^{-4}) (b^5) (a^{-1}b^{-2}) (b^3a^{-1}) =x_1^{-1} x_4^{-1} x_5 x_2^{-1} x_0^{-1}.
\]
Thus,
\[
    x_4=x_5 x_2^{-1} x_0^{-1} \beta_K^{-1} x_1^{-1}
\]
and
\[
    x_3= x_0^{-1} \alpha_K^{-1} \beta_K^{-1} x_1^{-1} .
\]

Therefore, the set
\[
\{\alpha_K,\beta_K,x_0,x_1,x_2,x_5\}
\]
is a free basis of $K[X,X]$ so $K$ is a free factor of $K[X,X]$.
Therefore, $S$ satisfies the free factor property.

Suppose $J$ is $P(-5,7,9)$.
$X/H[X,X]$ has presentation matrix
\[
\left(
\begin{array}{cc}
1 & -4 \\
-3 & 8
\end{array}
\right)
\]
which becomes
\[
\left(
\begin{array}{cc}
1 & 0 \\
0 & 4
\end{array}
\right)
\]
after row operations.

By Reidemeister-Schreier, we obtain the free basis $\{x_0,x_2,x_3,x_4\}$ where $x_i=b^iab^{-i}$ for $i=0,\ldots,3$ and $x_4=b^4$.

\begin{align*}
	\alpha_H = & (b^{-1}a)^4a^{-3}\\
	=& (b^{-4}) (b^3ab^{-3}) (b^2ab^{-2}) (bab^{-1}) (a^{-1}) (a^{-1})\\
	=& x_4^{-1} x_3 x_2 x_1 x_0^{-2} 
\end{align*}
and
\begin{align*}
	\beta_H = & b^4(a^{-1}b)^3\\
	=& (b^{-4}) (ba^{-1}b^{-1}) (b^2a^{-1}b^{-2}) (b^3a^{-1}b^{-3}) (b^4)\\
	=& x_4 x_1^{-1} x_2^{-1} x_3^{-1} x_4 .
\end{align*}
Thus,
\[ 
    x_4=\beta_H \alpha_H x_0^2
\]
and
\[
    x_3=x_4 \alpha_H x_0^2 x_1^{-1} x_2^{-1}.
\]
Therefore, the set
\[
\{\alpha_H,\beta_H,x_0,x_1,x_2\}
\]
is a free basis of $H[X,X]$ so
$H$ is a free factor of $H[X,X]$.

A similar argument shows $K$ is a free factor of $K[X,X]$.
Therefore, $S$ satisfies the free factor property.

\end{proof}

\begin{lem} \label{lem33r}
If $J$ is a $P(-3,3,2r+1)$ pretzel knot then $S$ satisfies the free factor property.
\end{lem}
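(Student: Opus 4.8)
The plan is to run the five-step procedure of the preceding subsection with the specialized parameters $p=-2$, $q=1$ (and $r$ the remaining free integer). First I would record the relevant data: from (\ref{handk}) the generators become $\alpha_H=(b^{-1}a)^2a^{-2}=b^{-1}ab^{-1}a^{-1}$, $\beta_H=b^{r+1}a^{-1}b$, $\alpha_K=ab^{-1}a^{-1}$, and $\beta_K=b^{r+1}a^{-1}ba^{-1}$, while the Seifert matrices (\ref{seifmat}) specialize so that $N=\det S_+=-2$. In particular $|X:H[X,X]|=|X:K[X,X]|=|N|=2$ by Proposition \ref{leadcoefprop}, so each of $H[X,X]$ and $K[X,X]$ is an index-two subgroup of the rank-two free group $X$ and hence, by Nielsen--Schreier, is free of rank $3$. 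The whole argument then rests on the elementary principle that any three-element generating set of a free group of rank three is automatically a free basis: thus to show $H$ is a free factor of $H[X,X]$ it suffices to exhibit one Reidemeister--Schreier generator $x$ for which $\{\alpha_H,\beta_H,x\}$ still generates $H[X,X]$, and similarly for $K$.

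Next I would carry out the matrix reductions. Row-reducing $S_+$ puts it in the form $\left(\begin{smallmatrix}1 & -(r+2)\\ 0 & 2\end{smallmatrix}\right)$, so $X/H[X,X]\cong\Z/2$ generated by the class of $b$, with $a\equiv(r+2)b$, and Schreier transversal $\mathcal{C}=\{1,b\}$. Row-reducing $S_-$ gives $\left(\begin{smallmatrix}2 & 0\\ 0 & 1\end{smallmatrix}\right)$, so $X/K[X,X]\cong\Z/2$ generated by the class of $a$ with $b$ trivial, and transversal $\{1,a\}$. Applying Reidemeister--Schreier then produces a rank-three free basis in each case (exactly one of the four formal generators $x_{c,x}$ collapses to the identity), after which I would rewrite $\alpha_H,\beta_H$ and $\alpha_K,\beta_K$ using the rewriting formula of Step 3 and solve the two resulting relations for two of the three basis elements. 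For $K$ this is clean and independent of $r$: one finds $\alpha_K=y_2^{-1}$ and $\beta_K=y_0^{r+1}y_1^{-1}y_2$, where $y_0=b$, $y_1=a^2$, $y_2=aba^{-1}$, which immediately express $y_1$ and $y_2$ through $\alpha_K,\beta_K,y_0$, so $\{\alpha_K,\beta_K,y_0\}$ is a free basis and $K$ is a free factor.

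The only genuine wrinkle, and the main obstacle, is in handling $H$: because $\beta_H$ contains the block $b^{r+1}$, both the Reidemeister--Schreier generators and the coset bookkeeping in the rewriting depend on the parity of $r$ (precisely because $a\equiv(r+2)b$ becomes trivial in $X/H[X,X]$ when $r$ is even but equal to $b$ when $r$ is odd). I would therefore split into the two parity cases and verify each uniformly in $r$. In both cases the block $b^{r+1}$ rewrites to a power of $x_2=b^2$, and one obtains relations of the shape $\alpha_H=x_2^{-1}x_1x_0^{-1}$, $\beta_H=x_2^{m}x_1^{-1}x_2$ (with $m=r/2$ for even $r$) or $\alpha_H=x_2^{-1}x_1x_2^{-1}x_0^{-1}$, $\beta_H=x_2^{m}x_1^{-1}x_2$ (with $m=(r+1)/2$ for odd $r$); in each case these solve for $x_0$ and $x_1$ in terms of $\alpha_H$, $\beta_H$ and $x_2$, so $\{\alpha_H,\beta_H,x_2\}$ generates the rank-three group $H[X,X]$ and is therefore a free basis. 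Combining both halves shows $H$ and $K$ are free factors of $H[X,X]$ and $K[X,X]$ for every $r$, which is exactly the free factor property for $S$. The routine but slightly fiddly part is checking that the parity-dependent rewriting of the variable-length block $b^{r+1}$ is correct for all $r$ at once rather than merely for small sample values.
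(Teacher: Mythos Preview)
Your proposal is correct and follows essentially the same approach as the paper's proof: the same specialization of the generators from (\ref{handk}), the same parity split for $H$ (the paper's reduced matrices $\left(\begin{smallmatrix}1&0\\0&2\end{smallmatrix}\right)$ and $\left(\begin{smallmatrix}1&-1\\0&2\end{smallmatrix}\right)$ are just the further-reduced forms of your $\left(\begin{smallmatrix}1&-(r+2)\\0&2\end{smallmatrix}\right)$), the same Reidemeister--Schreier rewritings, and the same conclusion that $\{\alpha_H,\beta_H,x_2\}$ and $\{\alpha_K,\beta_K,y_0\}$ are free bases.
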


\begin{proof}
From (\ref{handk}), we have that
\[
\begin{array}{lcl}
\alpha_H=b^{-1}ab^{-1}a^{-1} & \text{\hspace{1cm}} & \alpha_K=ab^{-1}a^{-1}\\
\beta_H=b^{r+1}a^{-1}b & & \beta_K=b^{r+1}a^{-1}ba^{-1}
\end{array}.
\]

The abelian group $X/H[X,X]$ has presentation matrix
\[
\left(
\begin{array}{cc}
1 & 0 \\
0 & 2
\end{array}
\right)
\]
when $r$ is even and
\[
\left(
\begin{array}{cc}
1 & -1 \\
0 & 2
\end{array}
\right)
\]
when $r$ is odd.

Using $\mathcal{C}=\{1,b\}$ as a set of coset representatives, we apply Reidemeister-Schreier to obtain a free basis of $H[X,X]$, $\mathcal{B}=\{x_0,x_1,x_2\}$

When $r$ is even,
\[
x_0=a, x_1=bab^{-1},\text{ and }x_2=b^2
\]
so
\[
	\alpha_H = (b^{-2})(bab^{-1})(a^{-1})=x_2^{-1}x_1x_0^{-1}
\]
and
\[
	\beta_H = (b^{2k})(ba^{-1}b^{-1})(b^2)=x_2^kx_1^{-1}x_2
\]
where $r=2k$.

When $r$ is odd,
\[
x_0=ab^{-1}, x_1=ba,\text{ and }x_2=b^2
\]
so
\[
	\alpha_H = (b^{-2})(ba)(b^{-2})(ba^{-1})=x_2^{-1}x_1x_2^{-1}x_0^{-1}
\]
and
\[
	\beta_H = (b^{2k+2})(a^{-1}b^{-1})(b^2)=x_2^{k+1}x_1^{-1}x_2
\]
where $r=2k+1$.

In either case, the set $\{\alpha_H,\beta_H,x_2\}$ is a free basis of $H[X,X]$ so $H$ is a free factor of $H[X,X]$.

$X/K[X,X]$ has presentation matrix
\[
\left(
\begin{array}{cc}
2 & 0 \\
0 & 1
\end{array}
\right)
.
\]

Using $\mathcal{C}=\{1,a\}$ as a set of coset representatives, we get the free basis of $K[X,X]$, $\mathcal{B}=\{x_0,x_1,x_2\}$ where
\[
x_0=a^2, x_1=b\text{ and }x_2=aba^{-1}.
\]
Thus,
\[
	\alpha_K = x_2^{-1}
\text{ and }
	\beta_K = x_1^{r+1}x_0^{-1}x_2.
\]
The set $\{\alpha_K,\beta_K,x_1\}$ is a free basis of $K[X,X]$ so $K$ is a free factor of $K[X,X]$.
Therefore, $S$ satisfies the free factor property.
\end{proof}

The proofs of the following results can be found in the appendix.

\begin{lem} \label{lemp3r}
If $J$ is a $P(2p+1,3,2r+1)$ pretzel knot with $p<-2$ then $S$ satisfies the free factor property.
\end{lem}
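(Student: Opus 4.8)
The plan is to run the five-step procedure of the outline with $q=1$ and verify directly that $H$ and $K$ are free factors of $H[X,X]$ and $K[X,X]$, which is precisely the free factor property for $S$. Setting $q=1$ in (\ref{handk}) gives $\alpha_H = b^{-1}ab^{-1}a^{p+1}$, $\beta_H = b^{r+1}a^{-1}b$, $\alpha_K = ab^{-1}a^{p+1}$ and $\beta_K = b^{r+1}a^{-1}ba^{-1}$, while (\ref{seifmat}) specializes to $S_+ = \left(\begin{smallmatrix} p+2 & -2 \\ -1 & r+2\end{smallmatrix}\right)$ and $S_- = S_+^T$, so that $N = (p+2)(r+2)-2$. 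Since $p\le -3$ and $r\ge 1$ force $(p+2)(r+2)\le -3$, we have $N\le -5$; write $m:=|N|=-N$. Row-reducing $S_+$ yields $\left(\begin{smallmatrix} 1 & -(r+2) \\ 0 & N\end{smallmatrix}\right)$, so $X/H[X,X]\cong \Z/m$ with $\overline a = \overline b^{\,r+2}$; symmetrically $X/K[X,X]\cong \Z/m$ with $\overline b = \overline a^{\,p+2}$.

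For $H$ I would take $\mathcal C=\{1,b,\dots,b^{m-1}\}$ and run Reidemeister--Schreier to get the rank-$(m+1)$ free basis $\{g_0,\dots,g_{m-1},y\}$, where $g_l = b^l a\, b^{-((l+r+2)\bmod m)}$ and $y=b^m$. The rewriting process should then produce the clean identity $\beta_H = g_{m-1}^{-1}y$ together with $\alpha_H = y^{-1}g_{m-1}\prod_{j=0}^{-p-2} g_{(m-2-j(r+2))\bmod m}^{-1}$. For $K$ I would instead use $\mathcal C=\{1,a,\dots,a^{m-1}\}$ and the basis $\{h_0,\dots,h_{m-1},z\}$ with $h_k = a^k b\, a^{-((k+p+2)\bmod m)}$ and $z=a^m$; here the rewriting collapses dramatically, giving $\alpha_K = h_{-p-1}^{-1}$ and $\beta_K = \big(\prod_{i=0}^{r} h_{i(p+2)\bmod m}\big)h_{-p-1}$.

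The extension step is then bookkeeping. On the $H$-side, $\beta_H = g_{m-1}^{-1}y$ lets me trade $y$ for $\beta_H$ in the basis (an elementary Nielsen move), after which $\alpha_H = \beta_H^{-1}\prod_{j=0}^{-p-2} g_{(m-2-j(r+2))\bmod m}^{-1}$; solving the $j=0$ factor for $g_{m-2}$ shows that $\{\alpha_H,\beta_H\}\cup\{g_l: l\ne m-2\}$ generates $H[X,X]$. On the $K$-side, $\alpha_K=h_{-p-1}^{-1}$ is itself a basis element and $z$ never appears, so after replacing $h_{-p-1}$ by $\alpha_K$ one has $\beta_K\alpha_K = \prod_{i=0}^{r} h_{i(p+2)\bmod m}$, and solving one surviving factor $h_{j_0}$ (with $j_0\ne -p-1$, which exists since $r+1\ge 2$) shows $\{\alpha_K,\beta_K\}\cup\{z\}\cup\{h_k: k\ne -p-1,\,k\ne j_0\}$ generates $K[X,X]$. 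In each case the generating set has exactly $m+1$ elements, so by the Hopfian property of free groups it is a free basis; hence $H$ and $K$ are free factors and $S$ satisfies the free factor property.

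The main obstacle is that $m=|N|$ is unbounded in $p$ and $r$, so $H[X,X]$ has no fixed rank and the ``solve for one generator'' step cannot be checked by a finite computation as in the sporadic cases. Its validity rests on showing that the index set $\{(m-2-j(r+2))\bmod m: 0\le j\le -p-2\}$ (and, for $K$, $\{i(p+2)\bmod m: 0\le i\le r\}$) consists of \emph{distinct} residues, so that the generator being solved for occurs with total exponent $\pm1$. These progressions have common differences $r+2$ and $p+2$, and since $\gcd(r+2,N)=\gcd(r+2,2)\le 2$ and $\gcd(p+2,N)=\gcd(p+2,2)\le 2$, each is injective modulo $m$ once its length stays below the period $m/\gcd$. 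Verifying the inequalities $-p-1\le m/\gcd(r+2,2)$ and $r+1\le m/\gcd(p+2,2)$ for all $p\le -3$, $r\ge 1$ — splitting into cases according to the parities of $r$ and $p$ — is the real technical content, and is where I expect the argument to demand the most care.
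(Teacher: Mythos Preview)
Your approach is correct and essentially identical to the paper's: the same coset systems, the same Reidemeister--Schreier bases (up to the paper's cosmetic modification of multiplying some Schreier generators by $b^{\pm m}$ or $a^{\pm m}$), and the same two-step Nielsen elimination on each side. The ``main obstacle'' you flag in the last paragraph is in fact vacuous: since $m=l(r+2)+2$ with $l=-p-2\ge 1$, the $H$-indices $m-2-j(r+2)$ for $j=0,\dots,l$ run exactly from $m-2$ down to $0$ without ever wrapping modulo $m$, and likewise the $K$-indices $-il\bmod m$ for $i=0,\dots,r$ are simply $0,\,m-l,\,m-2l,\dots,\,m-rl=l+2$, all lying in $[0,m-1]$---so distinctness is immediate and no gcd/parity case analysis is needed.
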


\begin{lem} \label{lem3qr}
Suppose $J$ is $P(-3,2q+1,2r+1)$ and one of the following conditions hold:
\begin{enumerate}
\item $q=2$ and $r\geq 6$,
\item $q=3$ and $r\geq 4$,
\item $q>3$.
\end{enumerate}
Then, $S$ satisfies the free factor property.
\end{lem}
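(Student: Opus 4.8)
The plan is to carry out the five-step Reidemeister--Schreier procedure of the preceding subsection with $q$ and $r$ kept as parameters, exactly as in the explicit computations of Lemmas~\ref{lemrandomcases} and~\ref{lem33r}. First I would specialize (\ref{handk}) and (\ref{seifmat}) to $p=-2$, which gives $\alpha_H=(b^{-1}a)^{q+1}a^{-2}$, $\beta_H=b^{r+1}(a^{-1}b)^{q}$, $\alpha_K=(ab^{-1})^{q}a^{-1}$, $\beta_K=b^{r}(ba^{-1})^{q+1}$, and Seifert matrix
\[
S_+=\begin{pmatrix} q-1 & -q-1 \\ -q & q+r+1\end{pmatrix}.
\]
Because the first column $(q-1,-q)$ has greatest common divisor $1$, integer row operations reduce $S_+$ to
\[
\begin{pmatrix} 1 & -r \\ 0 & N\end{pmatrix},\qquad N=\det S_+=qr-q-r-1.
\]
The three hypotheses are precisely the range in which $N\ge 2$: for $q=2$ one has $N=r-3$, for $q=3$ one has $N=2(r-2)$, and for $q>3$ (with $r\ge q$) one has $N\ge q^{2}-2q-1>1$. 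This keeps us away from the obstructions of Propositions~\ref{rtfnobstructprop} and~\ref{ffpobstructprop}, and it shows $X/H[X,X]\cong\Z/N\Z$ is cyclic, generated by the class of $b$, with $a\equiv b^{r}$.

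With the top-left entry equal to $1$, the coset representatives are simply $\mathcal{C}=\{1,b,\dots,b^{N-1}\}$, and Reidemeister--Schreier produces a free basis of $H[X,X]$ of rank $N+1$: the elements $x_k:=b^{k}a\,b^{-m_k}$ for $0\le k\le N-1$, where $m_k\in\{0,\dots,N-1\}$ is the residue of $k+r$ modulo $N$, together with $z:=b^{N}$. The heart of the argument is to run the rewriting process on $\alpha_H$ and $\beta_H$, tracking the coset as each syllable is read: an $a^{\pm1}$ shifts the coset by $\pm r$ and emits some $x_k^{\pm1}$, while a $b^{\pm1}$ emits $z^{\pm1}$ only when it crosses the boundary between $b^{N-1}$ and $1$. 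As in Lemmas~\ref{lemrandomcases} and~\ref{lem33r}, I would then locate a basis element occurring exactly once (and to the first power) in the rewritten $\alpha_H$, solve for it, substitute into the rewritten $\beta_H$, and locate a second such element there. Eliminating these two generators exhibits $\{\alpha_H,\beta_H\}$ together with the remaining $N-1$ basis elements as a generating set for $H[X,X]$ of cardinality $N+1$; since a generating set of a free group whose size equals the rank is automatically a free basis, $H$ is a free factor of $H[X,X]$.

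The same procedure is then run on the transpose $S_-$ with $\alpha_K$ and $\beta_K$. Here the isomorphic cyclic quotient $X/K[X,X]\cong\Z/N\Z$ from (\ref{HequalsK}) may instead be presented on the class of $a$ (the natural presentation depending on the parity of $q$, as already visible in the $K$-side of Lemma~\ref{lem33r}), so I would choose coset representatives accordingly and again eliminate two generators to show $K$ is a free factor of $K[X,X]$. Together these give the free factor property for $S$, which is the conclusion of the lemma (and, combined with a prime-power value of $|\Delta_J(0)|=|N|$, yields residual torsion-free nilpotence through Proposition~\ref{mayanaprop}).

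I expect the rewriting-and-elimination step to be the main obstacle, since the residue $m_k$ makes the position of each emitted generator depend on $r$ modulo $N$, and one must check uniformly that a candidate generator really does occur with multiplicity one. This is exactly where the lower bounds on $r$ are forced. For the borderline excluded knots $P(-3,5,11)$ and $P(-3,7,7)$, which have $N=2$ and so escape the obstructions of Propositions~\ref{rtfnobstructprop} and~\ref{ffpobstructprop}, the syllables of $\alpha_H,\beta_H$ (or $\alpha_K,\beta_K$) collide after reduction modulo $N$ and no generator is left isolated, so the chosen free basis of $H$ (or $K$) fails to extend and, by the free factor criterion recalled above, $S$ genuinely does not satisfy the property. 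I would therefore split the computation into the boundary families $q=2$ and $q=3$ and the generic range $q>3$, mirroring the way the explicit base cases were organized, arranging the wraparound so that it contributes the single letter $z$ in a controlled place and thereby keeps one $a$-type generator isolated in each of $\alpha_H$ and $\beta_H$.
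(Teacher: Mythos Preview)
Your plan is correct and is essentially the paper's own proof: the same case split ($q=2$ versus $q\ge3$), the same row reduction to $\left(\begin{smallmatrix}1&-r\\0&N\end{smallmatrix}\right)$ in the generic case, the same Reidemeister--Schreier basis $x_k=b^{k}ab^{-k-r}$ together with $x_N=b^N$, and the same elimination of two generators (for $H$ with $q\ge3$ the paper removes $x_0$ and $x_1$, using $N>r-2>1$, i.e.\ $r\ge4$, to keep the middle syllables of the rewritten words away from them). The one refinement worth borrowing is on the $K$-side for $q\ge3$: the paper first changes free generators of $X$ via $a=a_*b_*$, $b=b_*$, which turns $\alpha_K$ into $a_*^{q}b_*^{-1}a_*^{-1}$ and, after rewriting, into a \emph{single} basis letter $x_1^{-1}$, so the elimination is immediate; your remark about the choice depending on the ``parity of $q$'' is not quite right---it is this substitution, not parity, that makes $a_*$-cosets the natural choice.
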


\begin{lem} \label{lem5qr}
If $J$ is $P(-5,2q+1,2r+1)$ and one of the following conditions hold:
\begin{enumerate}
\item $q=3$ and $r\geq 13$,
\item $q=4$ and $r\geq 9$,
\item $q=5$ and $r\geq 7$,
\item $q>5$.
\end{enumerate}
Then, $S$ satisfies the free factor property.
\end{lem}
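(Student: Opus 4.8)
The plan is to establish the free factor property directly, by running the five-step Reidemeister--Schreier procedure of the preceding subsection with $p=-3$ (so that $2p+1=-5$); by Proposition \ref{mayanaprop} this is all that is needed. First I would specialize (\ref{handk}) and (\ref{seifmat}) to $p=-3$, recording
\[
\alpha_H=(b^{-1}a)^{q+1}a^{-3},\quad \beta_H=b^{r+1}(a^{-1}b)^{q},\quad \alpha_K=(ab^{-1})^{q}a^{-2},\quad \beta_K=b^{r}(ba^{-1})^{q+1},
\]
with Seifert matrices $S_\pm$ of common determinant $N=qr-2q-2r-2$. A direct check shows that each of the four hypotheses ($q=3$, $r\ge13$; $q=4$, $r\ge9$; $q=5$, $r\ge7$; $q>5$, using $r\ge q$) forces $N\ge5$; in particular $N\ne0$ and $|N|\ne1$, so $J$ is rationally homologically fibered by Proposition \ref{rhfprop} and neither Proposition \ref{rtfnobstructprop} nor Proposition \ref{ffpobstructprop} obstructs the conclusion.

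With the setup in place, I would carry out step (1) by reducing each of $S_+$ and $S_-$ via row operations to the triangular form with positive diagonal entries $u,w$ satisfying $uw=N$, and then take $\mathcal C=\{a^kb^l:0\le k<u,\;0\le l<w\}$ as a Schreier transversal for $H[X,X]$ (respectively $K[X,X]$). The shape of the pair $(u,w)$ depends on $q$: when $q=3$ the corner entry $q-2=1$ gives $u=1$ and a transversal of powers of $b$ (as in Lemma \ref{lemrandomcases}), whereas for larger $q$ one generally has $u>1$. In every case $H[X,X]$ and $K[X,X]$ have index $N$ in the rank-two free group $X$, hence are free of rank $N+1$ by Nielsen--Schreier, and step (2) produces an explicit free basis consisting of the Schreier generators $c\,x\,(\overline{cx})^{-1}$ for $c\in\mathcal C$ and $x\in\{a,b\}$.

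The heart of the argument is steps (3) and (4). I would use the Reidemeister--Schreier rewriting process to express $\alpha_H$ and $\beta_H$ as words in this basis, and then solve these two relations for two of the Schreier generators $x_i,x_j$ in terms of $\alpha_H$, $\beta_H$ and the remaining generators. Since $H=\langle\alpha_H,\beta_H\rangle$ is free of rank $2$ while $H[X,X]$ is free of rank $N+1$, the resulting set $\{\alpha_H,\beta_H\}\cup\{x_k:k\ne i,j\}$ has exactly $N+1$ elements and generates $H[X,X]$, so it is automatically a free basis (a free group of rank $N+1$ admits no generating set of smaller cardinality). This exhibits $H$ as a free factor of $H[X,X]$, and the identical computation applied to $\alpha_K,\beta_K$ exhibits $K$ as a free factor of $K[X,X]$, which is the free factor property.

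The main obstacle is performing the solve-for-two-generators step uniformly in $r$, because the indices of the Schreier generators occurring in the rewritten $\alpha_H,\beta_H$ are determined by the running coset indices, i.e.\ by partial sums of the exponents of the word, and these depend delicately on the residues of $q$ and $r$ modulo $N$ (and on arithmetic such as the parity of $q$ and $\gcd(q+1,r)$, which already control the shape of $\mathcal C$). This sensitivity is exactly why the statement splits into $q=3,4,5$ and $q>5$ and imposes the thresholds $r\ge13,9,7$: below these bounds the two target generators collide or fail to appear in isolatable positions, so that the free factor property genuinely fails there (matching the knots excluded from Theorem \ref{mainthm}), whereas above the thresholds they occur in separable positions. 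I would therefore handle the generic range $q>5$ with a single family of index formulas, and treat $q=3,4,5$ as finitely many boundary families, in each splitting on the residue of $r$ modulo a small integer (as the parity of $r$ was handled in Lemma \ref{lem33r}) and checking in each subcase that $x_i$ and $x_j$ are distinct and genuinely eliminable.
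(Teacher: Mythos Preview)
Your overall strategy is exactly the paper's: specialize to $p=-3$, row-reduce the Seifert matrix, run Reidemeister--Schreier, rewrite $\alpha$ and $\beta$, and solve for two Schreier generators to exhibit the free factor splitting. The outline is sound and your rank-counting argument is the same one the paper uses implicitly.

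Where your plan diverges from the actual execution is the case structure. You propose to treat $q>5$ with a single family of index formulas, splitting only on residues of $r$. In fact the paper must split on residues of $q$: for $H$ the argument bifurcates on the parity of $q$ (when $q$ is even the triangular form has $u=2$, so the transversal is $\{b^i,ab^i:0\le i<w\}$; when $q$ is odd one gets $u=1$), and for $K$ it trifurcates on $q\bmod 3$ (with $u=1$ for $q\equiv 0,1$ and $u=3$ for $q\equiv 2$). Furthermore, the even-$q$ and $q\equiv 0,2\pmod 3$ subarguments need $r\ge 8$, so the sporadic knots $P(-5,13,13)$ and $P(-5,13,15)$ (i.e.\ $q=6$, $r=6,7$) have to be checked by hand; your plan does not anticipate these.

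Two devices in the paper are also missing from your sketch. First, in several subcases a preliminary change of generators such as $a=b_*^2a_*$, $b=b_*$ is made before row-reducing, to force a unit in the presentation matrix. Second, rather than solving directly from $\alpha$ and $\beta$, the paper often replaces the pair by $\{\beta\alpha,\beta\}$, since the product $\beta_H\alpha_H$ (resp.\ $\beta_K\alpha_K$) collapses to a very short word in the Schreier basis. Finally, the ``isolability'' of the target generator is not automatic in the generic families: the rewritten $\beta$ involves Schreier generators with indices of the form $c+i(v+1)\bmod N$, and one must check these are pairwise distinct over the relevant range of $i$. The paper does this by computing $\gcd(N,v+1)$ via the Euclidean algorithm and showing $N$ exceeds $(q{-}1)\gcd(N,v+1)$; this arithmetic step is the real content of the thresholds on $r$, and your proposal does not supply it.
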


\subsection{Proof of Theorem \ref{countexamthm}}

For each integer $q\geq3$, let $J_q$ be the pretzel knot $P(1-2q,2q+1,4q-3)$ so $p=-q$ and $r=2q-2$

\begin{lem} \label{lemcounter}
For all $q\geq3$, the standard Seifert surface $S$ of $J_q$ satisfies the free factor property.
\end{lem}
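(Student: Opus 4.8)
The plan is to run the five-step procedure of the Outline of Procedure for the specific parameters $p=-q$, $r=2q-2$, uniformly in $q$. First I would record the Seifert data: substituting into (\ref{seifmat}) gives
\[
S_+=\left(\begin{array}{cc} 1 & -q-1\\ -q & 3q-1\end{array}\right),
\qquad
S_-=\left(\begin{array}{cc} 1 & -q\\ -q-1 & 3q-1\end{array}\right),
\]
so $\det S_\pm=-(q-1)^2$. Writing $m:=(q-1)^2$, a single row operation triangularizes each matrix, showing that $X/H[X,X]$ and $X/K[X,X]$ are both cyclic of order $m$, generated by the class of $b$, with $a\equiv b^{q+1}$ modulo $H[X,X]$ and $a\equiv b^{q}$ modulo $K[X,X]$. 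Hence $\mathcal C=\{1,b,\dots,b^{m-1}\}$ serves as a set of coset representatives in both cases, and the Reidemeister--Schreier method produces a free basis of $H[X,X]$ (respectively $K[X,X]$) consisting of the single element $b^{m}$ together with the $a$-correction terms $x_l:=b^l a\,b^{-\sigma(l)}$, where $\sigma(l)\equiv l+q+1$ (respectively $\sigma(l)\equiv l+q$) modulo $m$; this basis has the expected rank $m+1$.

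The key simplification I would exploit is that $\beta_H\alpha_H$ and $\beta_K\alpha_K$ telescope. Since $(a^{-1}b)^q=\big((b^{-1}a)^q\big)^{-1}$ and $(ba^{-1})^{q}=\big((ab^{-1})^{q}\big)^{-1}$, the expressions in (\ref{handk}) collapse to
\[
\beta_H\alpha_H=b^{2q-2}a^{1-q},
\qquad
\beta_K\alpha_K=b^{2q-1}a^{-q}.
\]
Because $\langle\alpha_H,\beta_H\rangle=\langle\alpha_H,\beta_H\alpha_H\rangle$ and likewise for $K$, it suffices to show that the pairs $\{\alpha_H,\beta_H\alpha_H\}$ and $\{\alpha_K,\beta_K\alpha_K\}$ each extend to a free basis of the relevant subgroup. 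Rewriting by tracking the current coset $b^{j}$ as each word is read (a $b^{\pm1}$ syllable being nontrivial, equal to $(b^{m})^{\pm1}$, exactly when it crosses $j=m-1\leftrightarrow j=0$), one finds that $\alpha_H$ contains $b^{m}$ exactly once, while the telescoped words $\beta_H\alpha_H$ and $\beta_K\alpha_K$ rewrite as short products of $a$-correction terms with pairwise distinct indices and no occurrence of $b^{m}$.

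With this in hand the isolate-and-solve step is routine. From $\alpha_H=(b^{m})^{-1}W$ I solve for $b^{m}$, replacing it in the basis by $\alpha_H$; then $\beta_H\alpha_H$ is a word in the remaining $x_l$ in which one generator $x_i$ occurs exactly once, so I solve for $x_i$ and replace it by $\beta_H\alpha_H$. The resulting set $\{\alpha_H,\beta_H\}\cup\{x_l:l\ne i\}$ generates $H[X,X]$ and has cardinality $m+1$, hence is a free basis, so $H$ is a free factor of $H[X,X]$. For $K$ the same scheme applies, except that $\alpha_K$ no longer contains $b^{m}$; instead its rewriting has, among its first $q$ syllables, the once-occurring generators $x_{q-1},x_{2(q-1)},\dots,x_{(q-2)(q-1)}$ (indices that are nonzero multiples of $q-1$ and do not recur in the terminal block $a^{1-q}$), so I solve for one of these from $\alpha_K$ and for a distinct syllable of $\beta_K\alpha_K$, once more obtaining a free basis of $K[X,X]$ of size $m+1$ containing $\alpha_K$ and $\beta_K$. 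This establishes the free factor property for $S$, which is the content of the lemma (and, with Proposition \ref{mayanaprop} and $N=-(q-1)^2$, feeds into Theorem \ref{countexamthm}).

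The main obstacle is making the single-occurrence claims uniform in $q$. The syllable indices are residues modulo $m=(q-1)^2$, so both the locations of the wraps and the pairwise distinctness of the syllable indices of $\beta_H\alpha_H$ and $\beta_K\alpha_K$ hinge on elementary congruence facts, such as $\gcd(q,m)=1$, the observation that $\gcd(q+1,m)$ is a small power of $2$, and bounds like $2q-1<m$; these are exactly the places where a careful but routine modular-arithmetic argument is required, including checking that the generator solved for from $\alpha_K$ and the one solved for from $\beta_K\alpha_K$ can be chosen distinct and compatibly. For small $q$ the bounds fail and extra wraps occur, so I expect to treat the initial cases by hand: in particular $q=3$ gives $J_3=P(-5,7,9)$, already covered by Lemma \ref{lemrandomcases}, and the argument above applies uniformly once $q\ge 4$.
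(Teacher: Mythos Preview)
Your approach is correct and genuinely different from the paper's.  Both arguments begin by observing $|N|=(q-1)^2=:m$ and that $X/H[X,X]$, $X/K[X,X]$ are cyclic of order $m$ generated by the class of $b$, and both treat $q=3$ separately via Lemma~\ref{lemrandomcases}.  From there the two diverge.  The paper works with a \emph{modified} Reidemeister--Schreier basis $x_k=b^kab^{-q-1-k}$ (respectively $b^kab^{-q-k}$) for all $k$, and analyzes the long words $\alpha_H,\beta_H,\alpha_K,\beta_K$ directly, locating in each a specific once-occurring generator: $x_0$ appears once in $\alpha_H$ and not in $\beta_H$, $x_{C-1}$ appears once in $\beta_H$; $x_q$ appears once in $\alpha_K$ and not in $\beta_K$, while $x_C$ appears once in $\beta_K$.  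You instead keep the \emph{unmodified} Schreier basis and use the telescoping identity $\beta_H\alpha_H=b^{2q-2}a^{1-q}$, $\beta_K\alpha_K=b^{2q-1}a^{-q}$ to replace one long generator of $H$ (respectively $K$) by a very short one.

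Your telescoping trick is a real simplification: the rewritten words $\beta_H\alpha_H$ and $\beta_K\alpha_K$ have only $q-1$ and $q$ syllables, all $a$-correction terms, and distinctness of their indices follows from $\gcd(q+1,m)\mid 4$ and $\gcd(q,m)=1$ together with the bound $2q-1<m$ for $q\geq4$.  The price you pay is the compatibility check: the generator you eliminate from $\alpha_K$ must not appear in $\beta_K\alpha_K$, which you flag but do not fully verify.  This does work---for $i\in\{2,\dots,q-2\}$ one has $i(q-1)\not\equiv q-1,0,-1-jq\pmod m$ for any relevant $j$ (reduce mod $q-1$), so for $q\geq4$ the choice $x_{2(q-1)}$ suffices.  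The paper avoids this issue by choosing its isolated generators so that the ``not in the other word'' condition is immediate.  Both routes are valid; yours trades a longer syllable-by-syllable analysis for a short extra congruence argument.
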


\begin{proof}
The knot $J_3$ is $P(-5,7,9)$.
Thus, for $J_3$, $S$ satisfies the free factor property by Lemma \ref{lemrandomcases}.

Assume $q\geq 4$.
Define $X$, $H$, and $K$ as above.
After row reductions, $X/H[X,X]$ has presentation matrix
\[
\left(
\begin{array}{cc}
1 & -(q+1) \\
0 & -N
\end{array}
\right)
\]
where $N=-(q-1)^2$.

Let $C=-N=(q-1)^2$.
Using Reidemeister-Schreier, we obtain the following basis.
\[
    \{ab^{-q-1},bab^{-q-2},\ldots,b^{C-q-2}ab^{1-C},b^{C-q-1}a,b^{C-q}ab^{-1},\ldots,b^{C-1}ab^{-q},b^{C}\}
\]
To simplify computations, we modify this basis by multiplying some of the elements by $b^{-C}$ on the right, and obtain a free basis of $H[X,X]$, $\mathcal{B}=\{x_0,\ldots,x_{C}\}$
where $x_k=b^kab^{-q-1-k}$ for $k=0,\ldots,C-1$ and $x_{C}=b^{C}$.

We can rewrite $\alpha_H$ and $\beta_H$ as
\begin{align*}
	\alpha_H = & (b^{-1}a)^{q+1}a^{-q}\\ = & x_{C}^{-1} x_{C-1} x_{C} (x_{q-1} \cdots x_{i(q-2)-1} \cdots x_{q(q-2)-1}) x_{C} x_{q-2} x_{q-3}^{-1} x_{C}^{-1}\\
	& (x_{(q-3)(q+1)}^{-1} x_{(q-4)(q+1)}^{-1} \cdots x_{(q-i)(q+1)}^{-1} \cdots x_0^{-1})
\end{align*}
and
\begin{align*}
	\beta_H = & b^{2q-1}(a^{-1}b)^q\\
	= & x_{q-2}^{-1} x_{C}^{-1} (x_{q(q-2)-1}^{-1} \cdots x_{q(q-i)-1}^{-1} \cdots x_{q-1}^{-1}) x_{C}^{-1} x_{C-1}^{-1} x_{C}.
\end{align*}

Since $q\geq 4$, the generator $x_0$ appears once in the expression for $\alpha_H$ and does not appear in the expression for $\beta_H$.
Also, since $q-2<C-1$ and $qk-1<C-1$ for all $k=1,\ldots,q-2$, $x_{C-1}$ only appears once in the expression for $\beta_H$.

Thus, $x_{C-1}$ is a product of $\beta_H$, $x_1,\ldots,x_{C-2},x_C$ and
$x_0$ is a product of $\alpha_H$, $x_1,\ldots,x_C$.

Therefore, the set $\{\alpha_H,\beta_H,x_1,\ldots,x_{C-2},x_C\}$ is a free basis of $H[X,X]$ so $H$ is a free factor of $H[X,X]$.

After row reductions, $X/K[X,X]$ has presentation matrix
\[
\left(
\begin{array}{cc}
1 & -q \\
0 & C
\end{array}
\right) .
\]

We obtain a free basis of $K[X,X]$, $\mathcal{B}=\{x_0,\ldots,x_{C}\}$
where $x_k=b^kab^{-(q+k)}$ for $k=0,\ldots,C-1$ and $x_{C}=b^{C}$.

We can rewrite $\alpha_K$ and $\beta_K$ as
\begin{align*}
	\alpha_K = & (ab^{-1})^qa^{-q+1}\\
	= & (x_{0} x_{q-1} x_{2(q-1)} \cdots x_{(q-2)(q-1)}) x_{C} x_{0} x_{C}^{-1}\\
	& (x_{q(q-2)}^{-1} x_{q(q-3)}^{-1} \cdots x_0^{-1})
\end{align*}
and
\begin{align*}
	\beta_K = & b^{2q-2}(ba^{-1})^{q+1}\\
	= & x_{q-1}^{-1} x_0^{-1} x_{C}^{-1} (x_{(q-2)(q-1)}^{-1} x_{(q-3)(q-1)}^{-1} \cdots x_{0}^{-1}.
\end{align*}

The generator $x_{q}$ appears once in the expression for $\alpha_K$ and does not appear in the expression for $\beta_K$.
Also, $x_{C}$ only appears once in the expression for $\beta_K$.
Therefore, the set $\{\alpha_K,\beta_K,x_0,\ldots,x_{q-1},x_{q+1},\ldots, x_{C-1}\}$ is a free basis of $K[X,X]$ so $K$ is a free factor of $K[X,X]$.
Therefore, $S$ satisfies the free factor property.
\end{proof}

\begin{proof}[Proof of Theorem \ref{countexamthm}]
By Lemma \ref{lemcounter}, $J_q$ has a Seifert surface satisfying the free factor property.
The Alexander polynomial of $J_q$ is $Nt^2+(1-2N)t+N$ where $N=-(q-1)^2$
so $J_q$ is rationally homologically fibered and $\Delta_{J_q}$ has two positive real roots.

When $q-1$ is a prime power, $|\Delta_{J_q}(0)|=(q-1)^2$ is also a prime power.
Therefore, when $q-1$ is a prime power, $\pi_1(M_{J_q})$ has residually torsion-free nilpotent commutator subgroup by Proposition \ref{mayanaprop},
and $\pi_1(M_{J_q})$ is bi-orderable by Proposition \ref{propnbo}.
Since $p=-q$, $\Sigma_2(J_q)$ is left-orderable by Proposition \ref{doublebranchprop} for all $q\geq3$.
\end{proof}

\subsection{Knots Where the Standard Seifert Surface Does Not Satisfy the Free Factor Property}

\begin{lem} \label{lemmonic}
If $J$ is $P(1-2q,2q+1,2q^2+1)$ or $P(1-2q,2q+1,2q^2-3)$ then $S$ does not satisfy the free factor property.
\end{lem}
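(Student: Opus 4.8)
The plan is to follow the same Reidemeister--Schreier procedure used in Lemmas \ref{lemrandomcases}--\ref{lemcounter}, but this time to \emph{exhibit an obstruction} rather than an extension. For the two families $P(1-2q,2q+1,2q^2+1)$ and $P(1-2q,2q+1,2q^2-3)$ we have $p=-q$ with $r=q^2$ and $r=q^2-2$ respectively, so by the determinant formula (\ref{detn}) one computes $N$ in each case; the key arithmetic feature I expect to exploit is that these particular values of $r$ make $|N|$ a perfect square (indeed I would guess $|N|=q^2$ or a closely related square), which is exactly the degenerate situation that lets the free basis of $H$ or $K$ fail to extend. First I would write down a presentation matrix for $X/H[X,X]$, reduce it by row operations to upper-triangular form $\left(\begin{smallmatrix} u & v \\ 0 & w\end{smallmatrix}\right)$, and read off the coset representatives $\mathcal{C}$ exactly as in the Outline of Procedure.

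Next I would apply Reidemeister--Schreier to produce an explicit free basis $\mathcal{B}=\{x_0,\dots,x_C\}$ of $H[X,X]$ (or of $K[X,X]$, whichever is the one that fails), using the now-standard labels $x_k=b^k a b^{-(\cdots)-k}$, and then rewrite the generators $\alpha_H,\beta_H$ (respectively $\alpha_K,\beta_K$) from (\ref{handk}) in terms of $\mathcal{B}$. The crucial step is the analysis of the abelianization of $H[X,X]$ relative to $H$: recall that $H$ extends to a free basis of $H[X,X]$ if and only if the images of $\alpha_H,\beta_H$ are part of a basis of the free abelian group $H[X,X]^{\mathrm{ab}}\cong\Z^{C+1}$, equivalently if and only if the $2\times(C+1)$ exponent-sum matrix of $(\alpha_H,\beta_H)$ over the generators $x_0,\dots,x_C$ can be completed to a unimodular $(C+1)\times(C+1)$ integer matrix. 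So I would compute this exponent-sum matrix and show that its $2\times 2$ minors have a common factor greater than $1$ (or that the induced map to $\Z^{C+1}$ has non-primitive image), which forces the extension to fail. By the last sentence of the Outline, failure for a \emph{chosen} basis of either $H$ or $K$ is enough to conclude that $S$ cannot satisfy the free factor property.

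The main obstacle I anticipate is bookkeeping in the rewriting of $\alpha_H$ and $\beta_H$: because $r\sim q^2$ the number of syllables and the range of indices $C\sim q^2$ both grow quadratically in $q$, so the expressions (as in the proof of Lemma \ref{lemcounter}) will be long products indexed by arithmetic progressions modulo $w$, and keeping track of exactly which $x_k$ appear, with which signs, is delicate. The reason I expect the obstruction to be \emph{visible} despite this complexity is that the special choices $r=q^2$ and $r=q^2-2$ should make a particular generator (the analogue of $x_{C-1}$ or $x_0$ in Lemma \ref{lemcounter}) appear with exponent sum divisible by a prime $p>1$ in \emph{both} $\alpha_H$ and $\beta_H$, so that the abelianized image lands in $p\cdot\Z^{C+1}$ in that coordinate and cannot be primitive. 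Concretely I would pin down this common prime and the offending coordinate by reducing the syllable indices modulo $w$ and checking when two syllables collapse onto the same $x_k$; this congruence computation, driven by the identity relating $q^2$ to the modulus $w=|N|$, is where the ``arithmetic properties of $p$, $q$ and $r$'' referenced in the Possible Generalizations subsection do the real work.
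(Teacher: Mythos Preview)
Your approach has a genuine gap that stems from a miscalculation of $N$. With $p=-q$ and $r=q^2$ one has
\[
N=(p+q+1)(q+r+1)-q(q+1)=(1)(q^2+q+1)-q^2-q=1,
\]
and with $r=q^2-2$ one gets $N=-1$. So $|N|=1$, not a square of order $q^2$ as you guessed. This changes everything: $X/H[X,X]$ is trivial, hence $H[X,X]=X$, and the Reidemeister--Schreier basis is just $\{a,b\}$. The exponent-sum matrix of $(\alpha_H,\beta_H)$ in $\{a,b\}$ is exactly the Seifert matrix $S_+$, whose determinant is $N=\pm1$; thus the images of $\alpha_H,\beta_H$ \emph{do} form a basis of $\Z^2$, and your abelianization obstruction detects nothing. (This also shows that your ``if and only if'' for the basis-extension criterion is false in the relevant direction: primitivity in the abelianization is necessary but not sufficient for being part of a free basis.)

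The paper instead invokes Proposition~\ref{ffpobstructprop}: if $|N|=1$ and $S$ satisfied the free factor property, then $H=K=X$ (rank considerations), so $i^\pm_*$ are isomorphisms, and by Stallings' fibration theorem $J$ would be a genus one fibered knot, hence the trefoil or the figure-eight --- a contradiction. The entire proof of Lemma~\ref{lemmonic} is the two-line computation $|N|=1$ plus a citation of that proposition; no Reidemeister--Schreier bookkeeping is needed or useful here.
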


\begin{proof}
When $J$ is $P(1-2q,2q+1,2q^2+1)$, $p=-q$ and $r=q^2$.
When $J$ is $P(1-2q,2q+1,2q^2-3)$, $p=-q$ and $r=q^2-2$.
In both cases, $|N|=1$ so by Proposition \ref{ffpobstructprop}, $S$ does not satisfy the free factor property.
\end{proof}

\begin{lem} \label{lemjustdontwork}
Suppose $J$ is one of 
\begin{itemize}
\item $P(-3,5,11)$,
\item $P(-3,7,7)$,
\item $P(-5,7,R)$ for $R=11,13,21,23$ or $25$,
\item $P(-5,9,R)$ for $R=9,13,15$ or $17$,
\item $P(-5,11,11)$, or
\item $P(-5,11,13)$.
\end{itemize}
Then $S$ the standard Seifert surface of $J$ does not satisfy the free factor property.
\end{lem}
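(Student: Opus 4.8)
The plan is to run the Reidemeister--Schreier procedure of the preceding subsection in reverse: instead of verifying that the distinguished basis of $H$ (or $K$) extends, we exhibit an obstruction to extension. The crucial point is that $\{\alpha_H,\beta_H\}$ is not merely a generating set but an honest free basis of $H$: it is the image under the injective map $i^+_*$ of the free basis $\{x,y\}$ of $F$. Hence, by the basis-extension characterization of free factors recalled in Section \ref{secmayland} (a free basis of $A$ extends to a free basis of $B$ if and only if $A$ is a free factor of $B$), the subsurface condition fails as soon as this one basis fails to extend. So for each knot in the list it suffices to pick whichever of the two sides, $H$ or $K$, is convenient and prove that its distinguished basis does not extend to a free basis of $H[X,X]$ (respectively $K[X,X]$).

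The key detection tool is abelianization. Since $X$ is free of rank $2$ and $|X:H[X,X]|=|\Delta_J(0)|=|N|$ by Proposition \ref{leadcoefprop}, the Nielsen--Schreier formula gives that $H[X,X]$ is free of rank $|N|+1$, so its abelianization is $\Z^{|N|+1}$ with free basis the images of the Reidemeister--Schreier generators $x_0,\ldots,x_{|N|}$. If $\{\alpha_H,\beta_H\}$ extended to a free basis of $H[X,X]$, then the images $\overline{\alpha_H},\overline{\beta_H}\in\Z^{|N|+1}$ would extend to a $\Z$-basis of $\Z^{|N|+1}$; equivalently, writing $M$ for the $2\times(|N|+1)$ integer matrix of exponent sums of $\alpha_H,\beta_H$ with respect to $x_0,\ldots,x_{|N|}$, the gcd of the $2\times 2$ minors of $M$ would equal $1$ (equivalently $\Z^{|N|+1}/\langle\overline{\alpha_H},\overline{\beta_H}\rangle$ would be torsion-free). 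Thus it is enough to show, for one of the two sides, that this gcd exceeds $1$, i.e. that the Smith normal form of $M$ has a nontrivial invariant factor.

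The execution is then a finite computation per case, entirely parallel to the affirmative computations in Lemmas \ref{lemrandomcases} and \ref{lem33r} but terminating in a nontrivial invariant factor rather than a clean reduction. For each listed knot I would: reduce the Seifert matrix from \eqref{handk} to the triangular presentation matrix of $X/H[X,X]$ (or $X/K[X,X]$); read off the coset representatives and the Reidemeister--Schreier free basis $\{x_i\}$; rewrite $\alpha_H,\beta_H$ (or $\alpha_K,\beta_K$) in that basis; pass to exponent sums to obtain the two vectors in $\Z^{|N|+1}$; and compute the gcd of $2\times 2$ minors, displaying a prime dividing all of them. Since $|N|$ grows with the parameters, this is what forces the case split into the specific values of $R$ and $q$ listed.

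The main obstacle is bookkeeping rather than conceptual. These are precisely the residual exceptional knots left uncovered by the uniform families of Lemmas \ref{lemp3r}, \ref{lem3qr}, and \ref{lem5qr}, so there is no single closed formula: each requires its own Reidemeister--Schreier rewriting, and the delicate step is tracking coset representatives and exponents through words such as $(b^{-1}a)^{q+1}a^{p}$ without arithmetic slips. Once the two exponent vectors are correctly in hand, however, detecting the torsion is an elementary gcd (or Smith normal form) check, so the genuine difficulty is confined to the mechanical rewriting and can in principle be automated for verification.
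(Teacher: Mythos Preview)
Your approach has a genuine gap: the abelianization obstruction is too coarse and in fact \emph{fails to detect anything} in every case on the list. Take $P(-3,5,11)$. With $N=2$ the Reidemeister--Schreier basis of $H[X,X]$ is $x_0=ab^{-1}$, $x_1=bab^{-2}$, $x_2=b^2$, and one computes
\[
\alpha_H=x_2^{-1}x_1^{2}x_0^{-1},\qquad \beta_H=x_2^{2}x_1^{-2}x_2,
\]
so your exponent matrix is $M=\begin{pmatrix}-1&2&-1\\0&-2&3\end{pmatrix}$ with $2\times 2$ minors $2,\,-3,\,4$, whose gcd is $1$. Thus the images of $\alpha_H,\beta_H$ \emph{do} extend to a $\Z$-basis of $\Z^{3}$, and your test says nothing. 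The same phenomenon occurs in every other case: the obstructing word is always (after a basis change) of the shape $u^{m}v^{\pm n}$ with $\gcd(m,n)=1$, so its abelianization is primitive even though the element itself is not a primitive of the free group.

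The paper's argument is genuinely non-abelian. It shows that a single element $w$ of $H$ (one of $\alpha_H$, $\beta_H$, or $\beta_H\alpha_H$, possibly after an elementary change of free basis of $H[X,X]$) has the form $u^{m}v^{\pm n}$ with $|m|,|n|\ge 2$, where $u,v$ belong to a free basis of $H[X,X]$. If $\{\alpha_H,\beta_H\}$ extended to a free basis, then $w$ would be primitive and the one-relator quotient $H[X,X]/\langle\!\langle w\rangle\!\rangle$ would be free; but this quotient splits as a free product with the torus-knot-type group $\langle u,v\mid u^{m}v^{\pm n}\rangle$ as a factor, and that group is non-abelian with infinite cyclic abelianization, hence not free. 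So the obstruction lives in the one-relator quotient, not in $H_1$, and your Smith-normal-form check cannot see it.
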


\begin{proof}

When $J$ is $P(-3,5,11)$, $X/H[X,X]$ has presentation matrix
\[
\left(
\begin{array}{cc}
1 & -1 \\
0 & 2
\end{array}
\right)
.
\]
We have the free basis $\mathcal{B}=\{ab^{-1},bab^{-2},b^2\}$ of $H[X,X]$.
Let $x_0=ab^{-1}$, $x_1=bab^{-2}$, and $x_2=b^2$
so
\[
\beta_H=b^6(a^{-1}b)^2=x_2^2 x_1^{-2} x_2.
\]

Let
\[
\Gamma:=\frac{H[X,X]}{\langle\beta_H^{H[X,X]}\rangle}\cong\langle x_0,x_1,x_2:x_2^3x_1^{-2}\rangle
\]
where $\langle\beta_H^{H[X,X]}\rangle$ is the normal closure of $\beta_H$ in $H[X,X]$.
Suppose $\{\alpha_H,\beta_H\}$ could be extended to a basis of $H[X,X]$,
then $\Gamma$ is a free group.
$\Gamma$ has as a subgroup isomorphic to $E:=\langle x_1,x_2:x_2^3x_1^{-2} \rangle$.
The abelianization of $E$ is $\Z$, but $E$ is not abelian since $x_1$ and $x_2$ do not commute.
Thus, $E$ is not free, and $\Gamma$ isn't free either, which is a contradiction.

Therefore, $H$ is not a free factor of $H[X,X]$,
and $S$ does not satisfy the free factor property.

When $J$ is $P(-5,7,25)$,
$H[X,X]$ has a free basis $x_0=a$, $x_1=bab^{-1}$, $x_2=b^2ab^{-2}$, $x_3=b^3ab^{-3}$, and $x_4=b^{4}$.
Under this basis
\[
    \beta_H\alpha_H=b^{12}a^{-2}=x_4^{3} x_0^{-2}.
\]

$\{\alpha_H,\beta_H\}$ can be extended to a free basis of $H[X,X]$ if and only if $\{\alpha_H,\beta_H\alpha_H\}$ can be extended to a free basis.
However, an argument similar to the previous case shows that $\beta_H\alpha_H$ cannot be extended to a basis of $H[X,X]$.

Therefore, $H$ is not a free factor of $H[X,X]$,
and $S$ does not satisfy the free factor property.

When $J$ is $P(-5,7,13)$ or $P(-5,7,21)$,
$H[X,X]$ has free basis $x_0=a$, $x_1=bab^{-1}$, and $x_2=b^{2}$.
Thus, the set $\{x_0,x_1,x_2^{-1} x_1 x_0\}$ is also a free basis of $H[X,X]$.
Denote $x_2^{-1} x_1 x_0$ by $y$.

Using the basis $\{x_0,x_1,y\}$,
\[
\alpha_H=(b^{-1}a)^4a^{-3}=y^2 x_0^{-3} .
\]
An argument similar to the previous cases shows that $\alpha_H$ cannot be extended to a basis of $H[X,X]$.
Therefore, $H$ is not a free factor of $H[X,X]$,
and $S$ does not satisfy the free factor property.

The proofs of the other cases are similar to the cases above.
Here we provide the elements obstructing the free factor property.

When $J$ is $P(-3,7,7)$,
\[
\beta_H=b^4(a^{-1}b)^3=x_2x_1^{-3}x_2
\]
where $x_0=ab^{-1}$, $x_1=bab^{-2}$, and $x_2=b^2$.

When $J$ is $P(-5,7,11)$,
\[
\beta_H=b^6(a^{-1}b)^3=x_3x_2^{-3}x_3
\]
where $x_0=ab^{-1}$, $x_1=bab^{-2}$, $x_2=b^2ab^{-3}$, and $x_3=b^3$.

When $J$ is $P(-5,7,23)$,
\[
\beta_H=b^12(a^{-1}b)^3=x_3^{3} x_2^{-3} x_3
\]
where $x_0=ab^{-1}$, $x_1=bab^{-2}$, $x_2=b^2ab^{-3}$, and $x_3=b^{3}$.

When $J$ is $P(-5,9,9)$,
\[
\beta_H=b^5(a^{-1}b)^4=x_0^{5} (x_2^{-1} x_1 x_0)^2
\]
where $x_0=b$, $x_1=aba^{-1}$, and $x_2=a^2$.

When $J$ is $P(-5,9,13)$,
\[
\beta_H=b^7(a^{-1}b)^4=x_0^{7} (x_2^{-1} x_1 x_0)^2
\]
where $x_0=b$, $x_1=aba^{-1}$, and $x_2=a^2$.

When $J$ is $P(-5,9,15)$,
\[
\beta_K\alpha_K=b^8a^{-3}=x_4^2 x_0^{-3}
\]
where $x_0=a$, $x_1=bab^{-1}$, $x_2=b^2ab^{-2}$, $x_3=b^3ab^{-3}$, and $x_4=b^4$.

When $J$ is $P(-5,9,17)$,
\[
\beta_H=b^9(a^{-1}b)^4=(x_0 x_6^{-1} x_4 x_2)^3 (x_6^{-1} x_5 x_2)^2
\]
where $x_0=ba^{2}$, $x_1=aba$, $x_2=a^2b$, $x_3=a^3ba^{-1}$, $x_4=a^4ba^{-2}$, $x_5=a^5ba^{-3}$ and $x_6=a^6$.

When $J$ is $P(-5,11,11)$,
\[
\beta_H=b^6(a^{-1}b)^5=x_3 x_2^{-5} x_3
\]
where $x_0=ab^{-1}$, $x_1=bab^{-2}$, $x_2=b^2ab^{-3}$, and $x_3=b^{3}$.

When $J$ is $P(-5,11,13)$,
\[
\beta_K=b^6(a^{-1}b)^6=(x_0 x_3 x_6)^3(x_0 x_2 x_4 x_6)^2
\]
where $x_0=ba^{-3}$, $x_1=aba^{-4}$, $x_2=a^2ba^{-5}$, $x_3=a^3ba^{-6}$, $x_4=a^4ba^{-7}$, $x_5=a^5ba^{-8}$, and $x_6=a^{6}$.
\end{proof}

\subsection{Proof of Theorem \ref{mainthm}}

\begin{lem} \label{lemtrivialcases}
If $J$ is $P(-3,5,7)$, $P(-5,7,17)$ or $P(-5,9,11)$  then $\pi_1(M_J)$ does not have residually torsion-free nilpotent commutator subgroup.
\end{lem}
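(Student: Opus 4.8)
The plan is to show that for each of these three knots, the integer $N=\det S_+$ is such that $|\Delta_J(0)|=|N|$ is \emph{not} a prime power, so that Proposition \ref{mayanaprop}'s hypothesis fails, while simultaneously verifying that this failure genuinely obstructs the conclusion (rather than merely the technique). First I would compute $N$ directly from (\ref{detn}) in each case. For $P(-3,5,7)$ we have $p=-2$, $q=2$, $r=3$; for $P(-5,7,17)$ we have $p=-3$, $q=3$, $r=8$; and for $P(-5,9,11)$ we have $p=-3$, $q=4$, $r=5$. Plugging into the $2\times 2$ determinant gives an explicit integer $N$ in each case, and the leading coefficient of $\Delta_J$ is exactly this $N$.

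However, failing the prime-power hypothesis of Proposition \ref{mayanaprop} only shows that \emph{this particular technique} does not apply; it does not by itself prove that $G$ fails to be residually torsion-free nilpotent. So the real content must come from elsewhere. The natural route is to combine two earlier results: Theorem \ref{thmknotbo} (the Clay--Gordon--Watson criterion) and Theorem \ref{thmboobst} (Ito's obstruction). Specifically, I would first argue that in each of these three cases $N>0$, so that $\Delta_J$ has two non-real roots and hence \emph{no} real positive root. Since these genus one pretzel knots are rationally homologically fibered (by Proposition \ref{rhfprop}, as $N\neq 0$ forces $\deg\Delta_J=2g=2$), Ito's Theorem \ref{thmboobst} then implies $\pi_1(M_J)$ is \emph{not} bi-orderable.

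The final link in the chain is the contrapositive of Theorem \ref{thmknotbo}: if $G$ were residually torsion-free nilpotent, then since all roots of $\Delta_J$ would need to be real and positive to invoke bi-orderability — but here I would instead use Proposition \ref{propnbo} directly, which packages exactly this reasoning. Indeed, Proposition \ref{propnbo} states that when $N>0$, $\pi_1(M_J)$ is never bi-orderable regardless of the commutator subgroup. That alone, though, does not contradict residual torsion-free nilpotence. \textbf{The hard part}, and the step I expect to be the main obstacle, is therefore recognizing that the three listed knots must actually have $N<0$ (giving two positive real roots), so that Theorem \ref{thmknotbo} becomes applicable: if $G$ were residually torsion-free nilpotent and all roots of $\Delta_J$ are real and positive, then $\pi_1(M_J)$ would be bi-orderable, contradicting Ito's obstruction via some \emph{independent} reason these knots fail bi-orderability. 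I would need to recompute carefully: if instead $N>0$ for all three, the argument collapses, so pinning down the sign of $N$ and reconciling it with both Ito's theorem and the Clay--Gordon--Watson theorem is the crux.

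In summary, my proof strategy is: (1) compute $N$ explicitly in all three cases from (\ref{detn}); (2) determine the sign of $N$ and check that $|N|$ is not a prime power (e.g.\ a value like $N=15=3\cdot 5$ or $N=21=3\cdot 7$); (3) invoke the equivalence between the prime-power leading coefficient condition and residual torsion-free nilpotence \emph{for this family}, which must rely on the fact (to be established, perhaps using the structure of $H$, $K$ inside $X$ as in the Reidemeister--Schreier computations of the preceding lemmas) that when $|\Delta_J(0)|$ is not a prime power the quotient $X/H[X,X]$ contains a factor forcing the relevant parafree or finiteness condition of Proposition \ref{baumprop} to fail. I anticipate the genuine obstruction argument will parallel the obstruction lemmas (Lemma \ref{lemjustdontwork}), extracting a non-free subgroup of $H[X,X]$ whose presence prevents $H$ from being a free factor, combined with the prime-power failure to rule out the Baumslag chain condition — so the decisive and most delicate step is converting the arithmetic failure ($|N|$ composite, not a prime power) into a genuine algebraic obstruction rather than a mere failure of the sufficient condition.
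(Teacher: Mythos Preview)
Your proposal has a genuine gap: you never actually carry out the arithmetic in step (1), and that arithmetic is the entire proof. For $P(-3,5,7)$ one has $p=-2$, $q=2$, $r=3$, so
\[
N=(p+q+1)(q+r+1)-q(q+1)=1\cdot 6-2\cdot 3=0.
\]
For $P(-5,7,17)$ with $p=-3$, $q=3$, $r=8$ one gets $N=1\cdot 12-3\cdot 4=0$, and for $P(-5,9,11)$ with $p=-3$, $q=4$, $r=5$ one gets $N=2\cdot 10-4\cdot 5=0$. In all three cases $N=0$, so $\Delta_J(t)=1$ and the result is immediate from Proposition~\ref{trivial} (equivalently Proposition~\ref{rtfnobstructprop}). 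This is exactly the paper's one-line proof.

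Everything else in your proposal is heading in the wrong direction. Your speculation that $|N|$ might be a non-prime-power like $15$ or $21$ is simply mistaken, and in any case you correctly note that failure of the prime-power hypothesis only blocks the \emph{technique}, not the conclusion. Your attempt to route through bi-orderability also cannot work as stated: the contrapositive of Theorem~\ref{thmknotbo} says that non-bi-orderability implies ``not (RTFN commutator \emph{and} all roots real positive)'', which only yields non-RTFN if you already know all roots are real and positive \emph{and} have an independent proof of non-bi-orderability; you have neither. The actual obstruction here is the much simpler Proposition~\ref{trivial}, and the lemma is named ``trivial cases'' for that reason.
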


\begin{proof}
For each of these knots, $N=0$ so this follows from Proposition \ref{rtfnobstructprop}.
\end{proof}

\begin{proof}[Proof of Theorem \ref{mainthm}]
When $p\geq 1$, $S$ is pseudo-alternating so $S$ satisfies the free factor property \cite{MayMur76}.
Therefore, when $p\geq 1$, the knot group of $P(2p+1,2q+1,2r+1)$ has residually torsion-free nilpotent commutator subgroups when $|\Delta_J(0)|$ is a prime power.

The other positive results follow from applying Proposition \ref{mayanaprop} to Lemma \ref{lemrandomcases}, Lemma \ref{lem33r}, Lemma \ref{lemp3r}, Lemma \ref{lem3qr}, Lemma \ref{lem5qr},
and Lemma \ref{lemtrivialcases}.
\end{proof}

\section{Higher Genus Pretzel Knots} \label{sechighergenus}

In this section, we prove Theorem \ref{mainthm2} which presents a family of pretzel knots with arbitrarily high genus whose groups have residually torsion-free nilpotent commutator subgroups.

Let $k$ be a positive integer, and let $r$ be any integer.
Suppose $J$ is the $2k+1$ parameter pretzel knot $P(3,-3,\ldots,3, -3, 2r+1)$ with genus $k$ Seifert surface $S$ as shown in Figure \ref{fighighergenus}.
Define $X$, $H$ and $K$ as in section \ref{secmayland}.

\begin{figure}[b]
\includegraphics[scale=1.0]{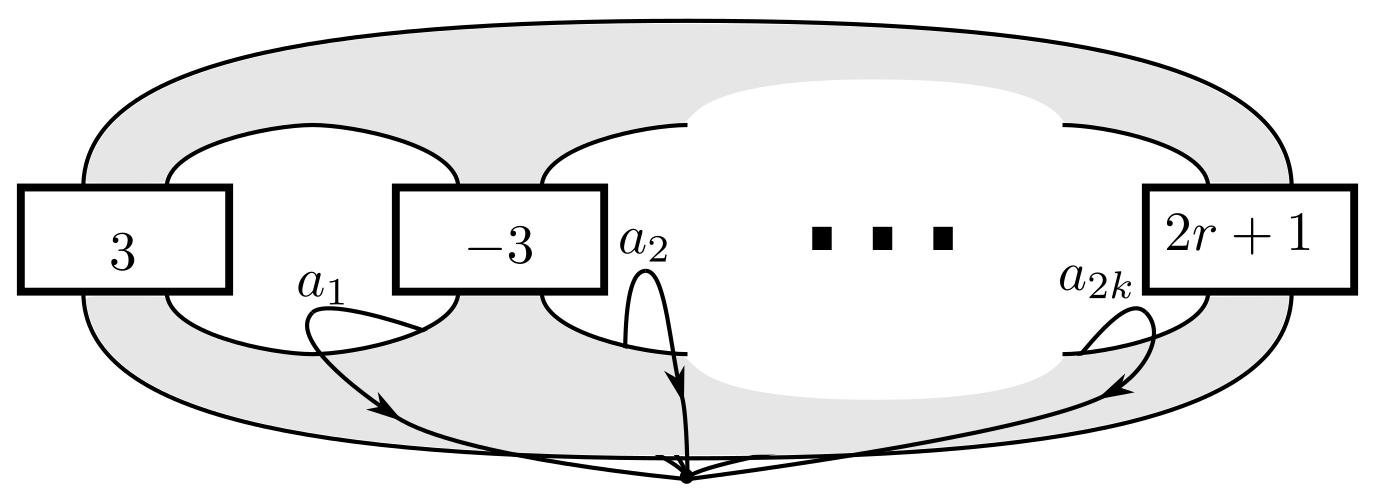}
\caption{Seifert surface for higher genus pretzel knots}
\label{fighighergenus}
\end{figure}

\begin{proof}[Proof of Theorem \ref{mainthm2}]
$X$ is a free group of rank $2k$ with generating set $\{a_1,\ldots,a_{2k}\}$ as show in Figure \ref{fighighergenus}.
By choosing a suitable free basis for $\pi_1(S)$, the subgroup $H$ has the following free basis.
\begin{align*}
\alpha_1= & (a_1^{-1}a_2)a_1 \\
\alpha_2= & (a_3^{-1}a_2)^2(a_2^{-1}a_1)^2\\
\vdots& \\
\alpha_{2i-1}= & (a_{2i-1}^{-1}a_{2i})(a_{2i-2}^{-1}a_{2i-1})\\
\alpha_{2i}= & (a_{2i+1}^{-1}a_{2i})^2(a_{2i}^{-1}a_{2i-1})^2 \\
\vdots& \\
\alpha_{2k-1}= & (a_{2k-1}^{-1}a_{2k})(a_{2k-2}^{-1}a_{2k-1})\\
\alpha_{2k}= & a_{2k}^{r+1}(a_{2k}^{-1}a_{2k-1})^2
\end{align*}
$X/H[X,X]$ has the presentation matrix
\begin{equation}\label{highgenusmatrix}
	\left(
	\begin{array}{cccccccc}
		0 & 1 & & & & & \\
		2 & 0 & -2 & & & & \\
		 & -1 & 0 & 1 & & & \\
		 & & 2 & 0 & -2 & & \\
		 & & & \ddots & \ddots & \ddots & \\
		 & & & & -1 & 0 & 1 \\
		 & & & & & 2 & r-1
	\end{array}
	\right)
\end{equation}
which after row operations becomes
\[
	\left(
	\begin{array}{cccccccc}
		0 & 1 & & & & & \\
		2 & 0 & 0 & & & & \\
		 & 0 & 0 & 1 & & & \\
		 & & 2 & 0 & 0 & & \\
		 & & & \ddots & \ddots & \ddots & \\
		 & & & & 0 & 0 & 1 \\
		 & & & & & 2 & 0
	\end{array}
	\right)
	.
\]

It follows that
\[
	\frac{X}{H[X,X]}\cong\bigoplus^k_{j=1}(\Z/2\Z)
\]
where the $j$th $\Z/2\Z$ factor is generated by the class of $a_{2j}$ in $X/H[X,X]$, and
when $i$ is odd, the class of $a_i$ is trivial.

Define
\[
a_{\bsig}:=a_1^{\sigma_1}a_3^{\sigma_2}\cdots a_{2k-1}^{\sigma_k}
\]
where $\bsig=(\sigma_1,\ldots,\sigma_k)\in\{0,1\}^k$.
$H[X,X]$ is an index $2^k$ subgroup of $X$ so the rank of $H[X,X]$ is $2^k+1$.

The following set is a set of coset representatives of $H[X,X]$.
\[
\mathcal{C}=\{a_{\bsig}:\bsig\in\{0,1\}^k\}
\]
From $\mathcal{C}$, we find a free basis $\mathcal{B}$ of elements of the form $x_{k,\bsig}:=a_{\bsig}a_k\overline{a_{\bsig}a_k}^{-1}$.

We point out a few important examples of basis elements.
For $i$ odd,
\[
	a_i^2=a_ia_i\overline{a_ia_i}^{-1}\in\mathcal{B}.
\]
For $i$ even,
\[
	a_i=1a_i\overline{1a_i}^{-1}\in\mathcal{B}.
\]
For $i$ odd and $j$ even,
\[
	a_ia_ja_i^{-1}=a_ia_j\overline{a_ia_j}^{-1}\in\mathcal{B}.
\]

Using the basis $\mathcal{B}$ rewrite the $\alpha_i$ as
\begin{align*}
\alpha_1= & (a_1^{-2})(a_1a_2a_1^{-1})(a_1^2) \\
\alpha_2= & (a_3^{-2})(a_3a_2a_3^{-1})(a_1a_2^{-1}a_1^{-1})(a_1^2)\\
\vdots& \\
\alpha_{2i-1}= & (a_{2i-1}^{-2})(a_{2i-1}a_{2i}a_{2i-1}^{-1})(a_{2i-1}a_{2i-2}^{-1}a_{2i-1}^{-1})(a_{2i-1}^2)\\
\alpha_{2i}= & (a_{2i+1}^{-2})(a_{2i+1}a_{2i}a_{2i+1}^{-1})(a_{2i-1}a_{2i}^{-1}a_{2i-1}^{-1})(a_{2i-1}^2) \\
\vdots& \\
\alpha_{2k-1}= & (a_{2k-1}^{-2})(a_{2k-1}a_{2k}a_{2k-1}^{-1})(a_{2k-1}a_{2k-2}^{-1}a_{2k-1}^{-1})(a_{2k-1}^2)\\
\alpha_{2k}= & a_{2k}^r(a_{2k-1}a_{2k}^{-1}a_{2k-1}^{-1})(a_{2k-1}^2)
\end{align*}
which can be extended to the free basis $\mathcal{B}'$ of $H[X,X]$ defined below.
\[
\mathcal{B}'=(\mathcal{B}-(\mathcal{B}_1\cup\mathcal{B}_2\cup\{a_{2k-1}^2\}))\cup\{\alpha_1,\ldots,\alpha_{2k}\}
\]
where
\[
\mathcal{B}_1=\{a_1a_2a_1^{-1},a_3a_4a_3^{-1},\ldots,a_{2k-1}a_{2k}a_{2k-1}^{-1}\}
\]
and
\[
\mathcal{B}_2=\{a_3a_2a_3^{-1},a_5a_4a_5^{-1},\ldots,a_{2k-1}a_{2k-2}a_{2k-1}^{-1}\}.
\]
Thus, $H$ is a free factor of $H[X,X]$.

A similar argument shows $K$ is a free factor of $K[X,X]$.
Thus, $S$ satisfies the free factor property.

From (\ref{highgenusmatrix}), we compute $|X:H[X,X]|=2^k$ so by Proposition \ref{rhfprop}, $J$ is rationally homologically fibered.
Thus, $S$ is an unknotted minimal genus Seifert surface, and $J$ is rationally homologically fibered.
It follows from Proposition \ref{mayanaprop} that the commutator subgroup of $J$ is residually torsion-free nilpotent.
\end{proof}

\begin{proof}[Proof of Corollary \ref{maincor2}]
From the Seifert matrix (\ref{highgenusmatrix}), we compute the following Alexander polynomial.
\[
\Delta_J(t)=(t-2)^k(2t-1)^k.
\]
It follows from Theorem \ref{mainthm2} and Theorem \ref{thmknotbo} that $\pi_1(M_J)$ is bi-orderable.
\end{proof}

\appendix

\section{Proofs of Lemmas} \label{secproofs}

In this appendix, we present the proofs of lemmas \ref{lemp3r}, \ref{lem3qr} and \ref{lem5qr}. Let $J$ be a pretzel knot $P(2p+1,2q+1,2r+1)$
with $1\leq q\leq r$.
Define the Seifert surface $S$ and
the groups $X\cong\langle a,b\rangle$, $H\cong\langle \alpha_H,\beta_H\rangle$, and $K\cong\langle \alpha_K,\beta_K\rangle$ as in section \ref{secgenusone}. 

\subsection{Proof of Lemma \ref{lemp3r}}

\begin{replem}{lemp3r}
If $J$ is a $P(2p+1,3,2r+1)$ pretzel knot with $p<-2$, then $S$ satisfies the free factor property.
\end{replem}
\begin{proof}
From (\ref{handk}), we have that
\[
\begin{array}{lcl}
\alpha_H=b^{-1}ab^{-1}a^{p+1} & \text{\hspace{1cm}} & \alpha_K=ab^{-1}a^{p+1}\\
\beta_H=b^{r+1}a^{-1}b & & \beta_K=b^{r+1}a^{-1}ba^{-1}
\end{array}.
\]

The abelian group $X/H[X,X]$ has a presentation matrix
\[
\left(
\begin{array}{cc}
1 & -r-2 \\
0 & -N
\end{array}
\right)
\]
where $N=pr+2p+2r+2=(p+2)(r+2)-2$, which is negative since $p\leq-2$.

Let $C=-N$.
Using $\mathcal{C}=\{1,b,\ldots,b^{C}\}$ as a set of coset representatives, we apply Reidemeister-Schreier to obtain a free basis of $H[X,X]$. Modifying this basis, we get
\[
\mathcal{B}=\{x_0,\ldots,x_{C}\}
\]
where $x_k:=b^kab^{-r-2-k}$ when $0\leq k\leq C-1$ and $x_{C}:=b^{C}$.

Using the rewriting process, we have that
\[
	\alpha_H = x_{C}^{-1} x_{C-1} (x_{C-r-4}^{-1} x_{C-2r-6}^{-1} \cdots x_{C-i(r+2)-2}^{-1} \cdots x_{r+2}^{-1} x_0^{-1})
\]
and
\[
	\beta_H = x_{C}^{-1} x_{C-1}^{-1} x_{C}.
\]
(Note that since $p<-2$, $C>r+2$ so $x_{r+2}$ is defined.)
We can extend $\{\alpha_H, \beta_H\}$ to the set $\{\alpha_H,\beta_H,x_1,\ldots,x_{C-2},x_{C}\}$ which is a free basis of $H[X,X]$ so $H$ is a free factor of $H[X,X]$.

$X/K[X,X]$ has a presentation matrix
\[
\left(
\begin{array}{cc}
-N & 0 \\
-p-2 & 1
\end{array}
\right) .
\]

Let $l=-p-2$ so $C=l(r+2)+2$.
Note that $l$ is a positive integer.
We obtain a free basis of $K[X,X]$.
\[
\mathcal{B}=\{x_0,\ldots,x_{C}\}
\]
where $x_k:=a^kba^{l-k}$ when $0\leq k\leq C-1$ and $x_{C}:=a^{C}$.

Using the rewriting process, we have that
\[
	\alpha_K = x_{l+1}^{-1}
\]
and
\[
	\beta_K = x_0 x_{C}^{-1} x_{l(r+1)+2} x_{lr+2} x_{l(r-1)+2} \cdots x_{2l+2} x_{l+1}.
\]
The set $\{\alpha_K,\beta_K,x_1,\ldots,x_l,x_{l+2},\ldots,x_{C}\}$ is a free basis of $K[X,X]$ so $K$ is a free factor of $K[X,X]$.
Thus, $S$ satisfies the free factor property.
\end{proof}

\subsection{Proof of Lemma \ref{lem3qr}}

\begin{replem}{lem3qr}
Suppose $J$ is $P(-3,2q+1,2r+1)$ and one of the following conditions hold:
\begin{enumerate}
\item $q=2$ and $r\geq 6$,
\item $q=3$ and $r\geq 4$,
\item $q>3$.
\end{enumerate}
Then, $S$ satisfies the free factor property.
\end{replem}

\begin{proof}
This lemma is shown by applying the outline from section \ref{secmayland} to two cases.
First, we address the case when $q=2$ and $r\geq 6$,
then we show the lemma is true when $q\geq 3$, $r\geq 4$ and $q\leq r$.
\bigskip{}

\noindent\textit{Case $q=2$ and $r\geq 6$:}
$X/H[X,X]$ has a presentation matrix
\[
\left(
\begin{array}{cc}
1 & -3 \\
0 & N
\end{array}
\right)
\]
where $N=r-3$.

$H[X,X]$ has free basis $x_k=b^kab^{-k-3}$ for $k=0,\ldots,N-1$, and $x_N=b^{N}$.
Under this basis
\[
    \alpha_H=(b^{-1}a)^2b^{-1}a^{-1}=x_N^{-1} x_{N-1} x_N x_1 x_0^{-1}
\]
and
\[
    \beta_H=b^{r+1}(ba^{-1})^2=x_N x_1^{-1} x_N x_{N-1}^{-1} x_N.
\]

Since $r\geq 6$, $N\geq 3$, so $x_{N-1}\neq x_1$.
Thus ,the set $\{\alpha_H,\beta_H,x_2,\ldots,x_N\}$ is a free basis of $H[X,X]$ so $H$ is a free factor of $H[X,X]$.

$X/K[X,X]$ has a presentation matrix
\[
\left(
\begin{array}{cc}
1 & -2 \\
0 & N
\end{array}
\right)
\]
where $N=r-3$.

$K[X,X]$ has free basis $x_k=b^kab^{-k-2}$ for $k=0,\ldots,N-1$, and $x_N=b^{N}$.
Under this basis
\[
    \alpha_K=(ab^{-1})^2a^{-1}=x_0 x_{1} x_0^{-1}
\]
and
\[
    \beta_K=b^{r+1}a^{-1}(ba^{-1})^2=x_N x_2^{-1} x_{1}^{-1} x_0^{-1}.
\]
The set $\{\alpha_K,\beta_K,x_0,x_3,\ldots,x_N\}$ is a free basis of $K[X,X]$ so $K$ is a free factor of $K[X,X]$.
\bigskip{}

\noindent\textit{Case $q\geq 3$ and $r\geq 4$:}
$X/H[X,X]$ has a presentation matrix
\[
\left(
\begin{array}{cc}
1 & -r \\
0 & N
\end{array}
\right)
\]
where $N=qr-q-r-1=(q-1)(r-1)-2$.
Note that since $q\geq 3$ and $r\geq 4$, $N>r-2>1$.

We then obtain a free basis $x_k=b^kab^{-r-k}$ for $k=0,\ldots,N-1$ and $x_N=b^N$.
Under this basis 
\begin{align*}
	\alpha_H = & (b^{-1}a)^{q+1}a^{-3}\\
	=& x_N^{-1} x_{N-1} x_N x_{r-2} x_{2r-3} \cdots x_{N-r+2} x_N x_1 x_0^{-1}
\end{align*}
and
\begin{align*}
	\beta_H =  & b^{r+1(a^{-1}b)^q}\\
	= & x_1^{-1} x_{N}^{-1} x_{N-r+2}^{-1} x_{N-2r+3}^{-1} \cdots x_{r-2} x_{N-1}.
\end{align*}
Since $N>r-2>1$, the set $\{\alpha_H,\beta_H,x_2,\ldots,x_{N}\}$ is a free basis of $H[X,X]$ so $H$ is a free factor of $H[X,X]$.

For $K$, we begin by substituting $a=a_*b_*$ and $b=b_*$ so that
\[
	\alpha_K = a_*^qb_*^{-1}a_*^{-1}\text{\hspace{0.5cm}}\beta_K=b_*^ra_*^{-q-1}.
\]
$X/K[X,X]$ has a presentation matrix
\[
\left(
\begin{array}{cc}
N & 0 \\
1-q & 1 
\end{array}
\right)
\]
where $N=qr-q-r-1$.

Under the basis $x_k=a_*b_*a_*^{1-q-k}$ for $k=0,\ldots,N-1$ and $x_N=a_*^N$,
\[
	\alpha_K = x_1^{-1}
\]
and
\[
	\beta_K = x_0 x_{q-1} \cdots x_{(q-1)(r-1)} x_{N}.
\]
Similarly to $H$, $K$ is a free factor of $K[X,X]$.
Thus, $S$ satisfies the free factor property.
\end{proof}

\subsection{Proof of Lemma \ref{lem5qr}}

\begin{replem}{lem5qr}
Suppose $J$ is $P(-5,2q+1,2r+1)$ and one of the following conditions hold:
\begin{enumerate}
\item $q=3$ and $r\geq 13$,
\item $q=4$ and $r\geq 9$,
\item $q=5$ and $r\geq 7$,
\item $q>5$.
\end{enumerate}
Then, $S$ satisfies the free factor property.
\end{replem}

This lemma is shown by applying the outline from section \ref{secmayland} to several cases.

\begin{lem}
If $J$ is $P(-5,7,2r+1)$ with $r\geq 13$,
then $S$ satisfies the free factor property.
\end{lem}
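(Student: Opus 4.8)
The plan is to run the five-step procedure of Section \ref{secgenusone} verbatim, specialized to $p=-3$ and $q=3$, and to keep every exponent that appears as an explicit function of $N=r-8$ so that the bound $r\ge 13$ can be read off at the very end. By definition $S$ satisfies the free factor property exactly when $H$ is a free factor of $H[X,X]$ and $K$ is a free factor of $K[X,X]$; by the basis-extension characterization of free factors recalled in Section \ref{secmayland}, it suffices to extend the generating pair $\{\alpha_H,\beta_H\}$ of (\ref{handk}) to a free basis of $H[X,X]$, and likewise $\{\alpha_K,\beta_K\}$ to a free basis of $K[X,X]$.

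First I would record the presentation matrices. With $p=-3,\ q=3$ the Seifert matrices (\ref{seifmat}) are $S_+=\left(\begin{smallmatrix}1&-4\\-3&r+4\end{smallmatrix}\right)$ and $S_-=\left(\begin{smallmatrix}1&-3\\-4&r+4\end{smallmatrix}\right)$; a single row operation reduces these to $\left(\begin{smallmatrix}1&-4\\0&N\end{smallmatrix}\right)$ and $\left(\begin{smallmatrix}1&-3\\0&N\end{smallmatrix}\right)$ respectively, where $N=r-8$. Hence $a\equiv b^4$ in $X/H[X,X]$ and $a\equiv b^3$ in $X/K[X,X]$, with $b$ of order $N$ in each. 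For $r\ge 13$ we have $N\ge 5>0$, so $\mathcal{C}=\{1,b,\dots,b^{N-1}\}$ is a set of coset representatives in both cases, and Reidemeister--Schreier (after the same right multiplication by $b^{-N}$ used in Lemma \ref{lemcounter}) yields the free bases $x_k=b^kab^{-4-k}$ for $0\le k\le N-1$ with $x_N=b^N$ for $H[X,X]$, and $x_k=b^kab^{-3-k}$ for $0\le k\le N-1$ with $x_N=b^N$ for $K[X,X]$. Each is free of rank $N+1$.

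Next I would rewrite $\alpha_H=(b^{-1}a)^4a^{-3}$, $\beta_H=b^{r+1}(a^{-1}b)^3$ and $\alpha_K=(ab^{-1})^3a^{-2}$, $\beta_K=b^{r}(ba^{-1})^4$ over these bases, exactly as in the appendix proofs of Lemmas \ref{lem3qr} and \ref{lem5qr}. The rewritten words are products of the $x_k$ interspersed with the top generator $x_N$, the latter being picked up whenever an $a$ is read at a coset $b^l$ with $l\ge N-4$. The aim of the rewriting is to isolate, in one of $\alpha_H,\beta_H$, a generator $x_i$ with $0<i<N$ that occurs exactly once and does not occur in the other word, so that this occurrence can be solved for $x_i$; locating a second such index then shows that replacing two of the $x_k$ by $\alpha_H,\beta_H$ gives a generating set of the correct cardinality $N+1$, hence a free basis of $H[X,X]$. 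The identical bookkeeping for $\alpha_K,\beta_K$ finishes $K$, and the two statements together give the free factor property.

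The main obstacle is precisely this index bookkeeping. I expect the exponents occurring in the rewritten words to form arithmetic progressions whose common difference is governed by $r$ and which are reduced modulo $N=r-8$; the extraction argument succeeds only when the two indices I wish to eliminate are genuinely distinct, lie strictly between $0$ and $N$, and avoid every slot at which an $x_N$ is spawned. This is exactly where $r\ge 13$ (i.e.\ $N\ge 5$) is forced: for the small values $r=5,\dots,12$ the relevant indices either collide or wrap around $N$, which is why those knots instead appear among the non-examples of Lemma \ref{lemjustdontwork}, or have $|N|\le 1$ or $N=0$ and so fail already by Proposition \ref{ffpobstructprop} or Proposition \ref{rtfnobstructprop}. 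Accordingly I would carry out the rewriting symbolically in $N$, verify once that the two chosen generators occur uniquely, and then check that no collision among their indices and the $x_N$-slots can occur as soon as $N\ge 5$.
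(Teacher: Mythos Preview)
Your plan matches the paper's proof almost exactly: same presentation matrices, same Schreier transversal $\{1,b,\dots,b^{N-1}\}$, same modified bases $x_k=b^kab^{-4-k}$ (resp.\ $b^kab^{-3-k}$) for $k<N$ and $x_N=b^N$, and the same extraction strategy of locating two once-occurring generators to replace by $\alpha,\beta$.

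The one point where your expectation diverges from what actually happens is the hope for a single symbolic rewriting valid for all $N\ge 5$. In the paper's computation the generic rewriting of $\alpha_H,\beta_H$ involves the fixed indices $0,2,4,5$ together with $N-1$ and $N$; when $N=5$ the index $5$ is no longer a legitimate $x_k$ with $k<N$ but rather the top generator $x_N$, so the word changes shape and the pair of indices one eliminates is $\{0,1\}$ rather than $\{0,2\}$. Likewise for $K$ the generic $\beta_K$ involves the index $6$, so $r\in\{13,14\}$ (i.e.\ $N\in\{5,6\}$) must be handled separately before the uniform expression takes over at $r\ge 15$. Thus your final sentence (``no collision \dots\ as soon as $N\ge 5$'') is slightly too optimistic: you should expect to split off $r=13$ for $H$ and $r\in\{13,14\}$ for $K$ as small boundary cases, exactly as the paper does, and only then run the symbolic argument for the remaining range.
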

\begin{proof}
In this case, $q=3$ and $N=r-8$.
$X/H[X,X]$ has a presentation matrix
\[
\left(
\begin{array}{cc}
1 & -4 \\
0 & N
\end{array}
\right).
\]
We use the free basis, $x_k=b^kab^{-4-k}$ for $k=0,\ldots,N-1$ and $x_N=b^N$.

When $r=13$,
\[
	\alpha_H = x_5^{-1} x_{4} x_5 x_{2} x_{5} x_0 x_1^{-1} x_{5} x_0^{-1}
\]
and
\[
	\beta_H = x_5^2 x_{0}^{-1} x_5^{-1} x_{2}^{-1} x_{5}^{-1} x_{4}^{-1} x_{5}.
\]
The set $\{\alpha_H,\beta_H,x_2,\ldots,x_{N}\}$ is a free basis of $H[X,X]$ so $H$ is a free factor of $H[X,X]$.

When $r\geq 14$,
\[
	\alpha_H = x_N^{-1} x_{N-1} x_N x_{2} x_{5} x_{4}^{-1} x_0^{-1}
\]
and
\[
	\beta_H = x_N x_{5}^{-1} x_{2}^{-1} x_{N}^{-1} x_{N-1}^{-1} x_{N}.
\]
The set $\{\alpha_H,\beta_H,x_1,x_3,\ldots,x_{N}\}$ is a free basis of $H[X,X]$ so $H$ is a free factor of $H[X,X]$.

$X/K[X,X]$ has a presentation matrix
\[
\left(
\begin{array}{cc}
1 & -3 \\
0 & N
\end{array}
\right).
\]
We use the free basis $x_k=b^kab^{-3-k}$ for $k=0,\ldots,N-1$ and $x_N=b^N$.

Using this basis,
\[
	\alpha_K = x_0 x_2 x_4 x_{3}^{-1} x_0^{-1}.
\]
When $r=13$ or $r=14$,
\[
	\beta_K = x_N^2 x_{6-N}^{-1} x_N^{-1} x_{4}^{-1} x_{2}^{-1} x_{0}^{-1},
\]
and when $r\geq 15$,
\[
	\beta_K = x_N x_{6}^{-1} x_{4}^{-1} x_{2}^{-1} x_{0}^{-1}.
\]
In both cases, the set $\{\alpha_K,\beta_K,x_0,x_1,x_4,\ldots,x_{N}\}$ is a free basis of $K[X,X]$ so $K$ is a free factor of $K[X,X]$.
\end{proof}

\begin{lem}
If $J$ is $P(-5,7,2r+1)$ with $r\geq 9$,
then $S$ satisfies the free factor property.
\end{lem}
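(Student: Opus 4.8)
The plan is to run the Reidemeister-Schreier bookkeeping of the outline in Section \ref{secgenusone} exactly as in the preceding sub-lemma, reusing that computation wherever possible. For $P(-5,7,2r+1)$ we have $p=-3$ and $q=3$, so $N=\det S_+=r-8$, and from (\ref{handk}) the generators are $\alpha_H=(b^{-1}a)^4a^{-3}$, $\beta_H=b^{r+1}(a^{-1}b)^3$, $\alpha_K=(ab^{-1})^3a^{-2}$, $\beta_K=b^r(ba^{-1})^4$. Since the preceding sub-lemma already establishes the free factor property for $r\geq 13$, the new content is confined to the short range $9\leq r\leq 12$, where $N\in\{1,2,3,4\}$. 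First I would record, for each such $r$, the two presentation matrices $\left(\begin{smallmatrix}1 & -4\\ 0 & N\end{smallmatrix}\right)$ for $X/H[X,X]$ and $\left(\begin{smallmatrix}1 & -3\\ 0 & N\end{smallmatrix}\right)$ for $X/K[X,X]$, together with the Schreier transversal $\{1,b,\dots,b^{N-1}\}$ and the induced free bases $x_k=b^kab^{-4-k}$ (resp.\ $x_k=b^kab^{-3-k}$) for $0\leq k\leq N-1$ and $x_N=b^N$, each of rank $N+1$.

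Next, for each remaining value of $r$ I would rewrite $\alpha_H,\beta_H$ and $\alpha_K,\beta_K$ in these bases by the rewriting process of step (3) of the outline. The aim in each case is the same as in Lemma \ref{lemrandomcases}: to solve the resulting words for two of the generators $x_j$, expressing them as products of the relators and the remaining $x_k$, so that replacing those two $x_j$ by $\alpha_H,\beta_H$ yields a free basis of $H[X,X]$ of the correct rank containing $H$ as a free factor, and symmetrically for $K[X,X]$. Whenever one side resists this manipulation I would fall back on the other, since by (\ref{HequalsK}) the two quotients are isomorphic and only one successful extension per side is needed. Producing such an extension for each of $r=9,10,11,12$, and invoking the preceding sub-lemma for $r\geq 13$, would give the free factor property throughout the stated range, whence Proposition \ref{mayanaprop} would yield the residual torsion-free nilpotence.

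The hard part will be the small-$N$ end of the range, which is exactly where the method is fragile. As $N$ shrinks the rewritten relators collapse: a word such as $\beta_H$ can acquire the shape $x_N^{a}x_j^{-b}x_N$, and the quotient of $H[X,X]$ by its normal closure is then a one-relator group whose abelianization is cyclic but which is itself non-free, so the pair $\{\alpha_H,\beta_H\}$ cannot be completed to a free basis. This is precisely the obstruction exploited in Lemma \ref{lemjustdontwork}, and in the extreme case $|N|=1$ it is the obstruction of Proposition \ref{ffpobstructprop}. I therefore expect the real work to be a careful, value-by-value analysis of these boundary cases, establishing in each instance that a genuine extension exists (and switching between the $H$- and $K$-sides to find one), rather than any single uniform calculation.
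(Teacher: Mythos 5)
Your proposal sets out to prove something that is false, and the root cause is the statement itself: as printed, the lemma repeats the family $P(-5,7,2r+1)$ of the preceding sub-lemma, but its proof in the paper begins with ``$q=4$ and $N=2r-10$,'' i.e.\ it is actually the $P(-5,9,2r+1)$ case demanded by Lemma \ref{lem5qr} (``$q=4$ and $r\geq 9$''); the ``7'' is a typo. Reading it literally, you set $q=3$, $N=r-8$, invoked the $r\geq 13$ sub-lemma, and proposed to settle $9\leq r\leq 12$ by finding basis extensions case by case. No such extensions exist: for $r=9$ the knot is $P(-5,7,19)$ with $N=1$, so Proposition \ref{ffpobstructprop} (via Lemma \ref{lemmonic}) shows $S$ does \emph{not} satisfy the free factor property, and $r=10,11,12$ give exactly $P(-5,7,21)$, $P(-5,7,23)$, $P(-5,7,25)$, which Lemma \ref{lemjustdontwork} proves fail the free factor property --- indeed all of $P(-5,7,R)$ with $11\leq R\leq 25$ are excluded cases of Theorem \ref{mainthm}. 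Your final paragraph actually names both of these obstructions, but you then resolve the tension in the wrong direction, predicting that ``a genuine extension exists'' in each boundary case instead of concluding that the literal statement cannot be what is meant and that the intended family is $q=4$.

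A second error would sink the plan even on the correct family: you write ``whenever one side resists this manipulation I would fall back on the other, since by (\ref{HequalsK}) the two quotients are isomorphic and only one successful extension per side is needed.'' The free factor property requires \emph{both} that $H$ be a free factor of $H[X,X]$ and that $K$ be a free factor of $K[X,X]$; (\ref{HequalsK}) identifies only the abelianized quotients $X/H[X,X]\cong X/K[X,X]$ and transfers nothing about free factors, and the paper's outline states explicitly that if the chosen basis of either $H$ or $K$ fails to extend then $S$ cannot satisfy the property. For comparison, the paper's actual proof of the intended statement is not a rerun of the $q=3$ computation: on the $H$ side it first changes the free basis of $X$ by $a=b_*^2a_*$, $b=b_*$, obtaining the presentation matrix $\left(\begin{smallmatrix}N&0\\ -2&1\end{smallmatrix}\right)$ and the basis $x_k=a_*^kb_*a_*^{-2-k}$, then treats $r=9$, $r=10$, and $r\geq 11$ separately; on the $K$ side it works with $\left(\begin{smallmatrix}1&3-r\\ 0&N\end{smallmatrix}\right)$ and $x_k=b^kab^{3-r-k}$, verifying $N>r-2>r-3>r-4>0$ so that the named generators exist. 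Your proposed matrices $\left(\begin{smallmatrix}1&-4\\ 0&N\end{smallmatrix}\right)$ and $\left(\begin{smallmatrix}1&-3\\ 0&N\end{smallmatrix}\right)$ are the $q=3$ data and never appear in this proof.
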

\begin{proof}
In this case, $q=4$ and $N=2r-10$.
$X/H[X,X]$ has a presentation matrix
\[
\left(
\begin{array}{cc}
N & 0 \\
-2 & 1
\end{array}
\right)
\]
after making the substitution $a=b_*^2a_*$ and $b=b_*$.
We use the free basis $x_k=a_*^kb_*a_*^{-2-k}$ for $k=0,\ldots,N-1$ and $x_N=b^N$.

When $r=9$,
\[
	\alpha_H = x_0 x_{3} x_6 x_8 x_{1} x_2^{-1} x_{8}^{-1} x_7^{-1} x_{5}^{-1} x_2^{-1} x_0^{-1}
\]
and
\[
	\beta_H = (x_0 x_{2} x_4 x_6 x_{8})^2 x_0 x_2 x_{1}^{-1} x_8^{-1} x_{6}^{-1} x_3^{-1} x_0^{-1}.
\]
The set $\{\alpha_H,\beta_H,x_0,x_1,x_2,x_4,x_6,x_7,x_8\}$ is a free basis of $H[X,X]$ so $H$ is a free factor of $H[X,X]$.

When $r=10$,
\[
	\alpha_H = x_0 x_{3} x_6 x_{9} x_{10} x_0^{-1} x_{10}^{-1} x_7^{-1} x_{5}^{-1} x_2^{-1} x_0^{-1}
\]
and
\[
	\beta_H = (x_0 x_{2} x_4 x_6 x_{8} x_{10})^2 x_0 x_{10}^{-1} x_9^{-1} x_{6}^{-1} x_3^{-1} x_0^{-1}.
\]

When $r\geq 11$,
\[
	\alpha_H = x_0 x_{3} x_6 x_{9} x_{10}^{-1} x_7^{-1} x_{5}^{-1} x_2^{-1} x_0^{-1}
\]
and
\[
	\beta_H = x_0 x_{2} \cdots x_{N-2} x_N x_0 x_{2} x_4 x_6 x_{8} x_{10} x_9^{-1} x_{6}^{-1} x_3^{-1} x_0^{-1}.
\]

In both cases, the set $\{\alpha_H,\beta_H,x_0,\ldots,x_6,x_8,x_{10},\ldots,x_{N}\}$ is a free basis of $H[X,X]$.

$X/K[X,X]$ has a presentation matrix
\[
\left(
\begin{array}{cc}
1 & 3-r \\
0 & N
\end{array}
\right) .
\]
We use the free basis $x_k=b^kab^{3-r-k}$ for $k=0,\ldots,N-1$ and $x_N=b^N$.
Using this basis,
\[
	\alpha_K = x_0 x_{r-4} x_N x_2 x_{r-2} x_{r-3}^{-1} x_0^{-1}
\]
and
\[
	\beta_K = x_4^{-1} x_N^{-1} x_{r-2}^{-1} x_2^{-1} x_{N}^{-1} x_{r-4}^{-1} x_{0}^{-1}.
\]
Since $r\geq 9$,
\[
    N=r-8+r-2>r-2>r-3>r-4>0
\]
so the generators $x_{r-2}$, $x_{r-3}$, and $x_{r-4}$ are valid generators.

The set $\{\alpha_K,\beta_K,x_1,\ldots,x_{r-4}, x_{r-2},\ldots,x_{N}\}$ is a free basis of $K[X,X]$ so $K$ is a free factor of $K[X,X]$.
\end{proof}

\begin{lem}
If $J$ is $P(-5,11,2r+1)$ with $r\geq 7$,
then $S$ satisfies the free factor property.
\end{lem}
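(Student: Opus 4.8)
This is the $q=5$ instance of Lemma~\ref{lem5qr} (so $p=-3$ and second parameter $2q+1=11$), and the plan is to prove it by the same Reidemeister--Schreier analysis used for the $q=3$ and $q=4$ sub-lemmas above, following the outline of Section~\ref{secgenusone}. First I would specialize (\ref{handk}) to
\[
\alpha_H=(b^{-1}a)^6a^{-3},\quad \beta_H=b^{r+1}(a^{-1}b)^5,\quad \alpha_K=(ab^{-1})^5a^{-2},\quad \beta_K=b^r(ba^{-1})^6,
\]
and record $N=\det S_+=3(r-4)$, which is positive (indeed $N\geq 9$) for $r\geq 7$. By Proposition~\ref{rhfprop} this already gives that $J$ is rationally homologically fibered, so by Proposition~\ref{mayanaprop} it suffices to establish the free factor property.

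For the $H$ side, $X/H[X,X]$ is presented by
\[
S_+=\left(\begin{array}{cc} 3 & -6 \\ -5 & r+6 \end{array}\right).
\]
Because $p+q+1=3\neq 1$ here, the unit entry is not already present as it was for $q=3$, so, exactly as in the $q=4$ case (where the substitution $a=b_*^2a_*$ was used), I would first apply a Nielsen change of free basis of $X$ to bring this matrix to triangular form with a $1$ on the diagonal, exhibiting $X/H[X,X]\cong\Z/N\Z$ generated by a single class $g$. Taking $\{1,g,\ldots,g^{N-1}\}$ as Schreier coset representatives, Reidemeister--Schreier produces a free basis $\{x_0,\ldots,x_N\}$ of $H[X,X]$ of the usual shifted-conjugate shape $x_k=g^khg^{-c-k}$ (for a fixed shift $c$) together with $x_N=g^N$. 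I would then rewrite $\alpha_H$ and $\beta_H$ in this basis and exhibit two indices $i,j$ so that $x_i$ occurs exactly once in $\alpha_H$ and nowhere in $\beta_H$, while $x_j$ occurs exactly once in $\beta_H$ and nowhere in $\alpha_H$; then $\{\alpha_H,\beta_H\}\cup(\mathcal B\setminus\{x_i,x_j\})$ is again a free basis, so $H$ is a free factor of $H[X,X]$. The $K$ side is handled identically, starting from
\[
S_-=\left(\begin{array}{cc} 3 & -5 \\ -6 & r+6 \end{array}\right),
\]
with the analogous substitution and the words $\alpha_K,\beta_K$, after which $K$ is a free factor of $K[X,X]$ and $S$ satisfies the free factor property.

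The main obstacle will be the exponent bookkeeping. Since $p+q+1=3$ and $q+1=6$ are larger than in the $P(-5,7,\,\cdot\,)$ and $P(-5,9,\,\cdot\,)$ cases, the rewritten words are longer and their $x_k$-indices are reduced modulo $N=3(r-4)$; the crux is verifying that the two distinguished generators genuinely appear exactly once and that their indices neither collide with each other nor wrap around. The hypothesis $r\geq 7$ is precisely what makes these index inequalities hold, and it is sharp, since $P(-5,11,11)$ and $P(-5,11,13)$ (that is, $r=5,6$) fail the free factor property by Lemma~\ref{lemjustdontwork}. As in the $q=3$ and $q=4$ sub-lemmas, where the smallest admissible values of $r$ were treated separately from the generic range, I anticipate that the boundary values (around $r=7,8$) will need a direct computation, with a uniform argument covering all larger $r$.
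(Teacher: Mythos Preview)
Your proposal is correct and follows essentially the same Reidemeister--Schreier strategy as the paper. The one procedural difference worth noting is that for the $H$ side the paper does \emph{not} need a Nielsen change of basis of $X$: since $\gcd(3,-5)=1$, row operations alone on $S_+$ yield the triangular form $\left(\begin{smallmatrix}1 & r-6\\ 0 & N\end{smallmatrix}\right)$, so the coset representatives are simply $\{1,b,\ldots,b^{N-1}\}$ in the original generators; the substitution $a=b_*^2a_*$ is instead used on the $K$ side. Your anticipated case split is also slightly pessimistic: the paper treats $r=7$ separately for $\beta_H$ only, handles $r\geq 8$ uniformly, and needs no split at all for $K$.
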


\begin{proof}
In this case, $q=5$ $N=3r-12$.
$X/H[X,X]$ has a presentation matrix
\[
\left(
\begin{array}{cc}
1 & r-6 \\
0 & N
\end{array}
\right) .
\]
We use the free basis $x_k=b^kab^{r-6-k}$ for $k=0,\ldots,N-1$ and $x_N=b^N$.

Using this basis,
\[
	\alpha_H = x_{2r-6}^{-1} x_N x_0^{-1}.
\]
When $r=7$,
\[
	\beta_H = x_9 x_{0}^{-1} x_2^{-1} x_{4}^{-1} x_{6}^{-1} x_{8}^{-1} x_{9},
\]
and when $r\geq 8$,
\[
	\beta_H = x_{2r-5}^{-1} x_N x_2^{-1} x_{r-3}^{-1} x_{2r-8}^{-1} x_{3r-13}^{-1} x_{N}.
\]
Note that when $r\geq 8$,
\[
    N>3r-13>0
\]
\[
    N=r-7+2r-5>2r-5>2r-6>2r-8>0
\]
and
\[
    N=2r-9+r-3>r-3>0
\]
so the generators $x_{3r-13}$, $x_{2r-5}$, $x_{2r-6}$, $x_{2r-8}$, and $x_{r-3}$ are valid generators.

In both cases, the set $\{\alpha_H,\beta_H,x_1,x_{3},\ldots,x_{N}\}$ is a free basis of $H[X,X]$ so $H$ is a free factor of $H[X,X]$.

After making the substitution $a=b_*^2a_*$ and $b=b_*$, $X/K[X,X]$ has a presentation matrix
\[
\left(
\begin{array}{cc}
N & 0 \\
3 & 1
\end{array}
\right) .
\]
We use the free basis $x_k=a_*^kb_*a_*^{3-k}$ for $k=0,\ldots,N-1$ and $x_N=b^N$.
Using this basis,
\[
	\alpha_K = x_1 x_{N}^{-1} x_{N-1} x_{N-3} x_{N-5} x_{N-4}^{-1} x_{N-2}^{-1} x_{N} x_1^{-1}
\]
and
\[
	\beta_K = x_0 x_{N}^{-1} x_{N-3} x_{N-6} \cdots x_3 x_0 x_{N}^{-1} x_{N-3} x_{N-6}
	  x_{N-7}^{-1} x_{N-5}^{-1} x_{N-3}^{-1} x_{N-1}^{-1} x_{N} x_1^{-1}.
\]
Since $r\geq 7$, $N\geq 9$
so all the generators used are valid generators.

The set $\{\alpha_K,\beta_K,x_0,\ldots,x_{N-8}, x_{N-6},x_{N-5},x_{N-4},x_{N-3}, x_{N-1},x_{N}\}$ is a free basis of $K[X,X]$ so $K$ is a free factor of $K[X,X]$.
\end{proof}

\begin{lem}
If $J$ is $P(-5,13,13)$ or $P(-5,13,15)$,
then $S$ satisfies the free factor property.
\end{lem}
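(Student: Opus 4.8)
The plan is to treat $P(-5,13,13)$ and $P(-5,13,15)$ as the two smallest, and hence most delicate, instances of the $q=6$ family $P(-5,13,2r+1)$, namely $r=6$ and $r=7$; these are exactly the boundary cases that fall outside the inequality hypotheses of the uniform $q>5$ argument of Lemma~\ref{lem5qr}. For each knot I would run the five-step procedure of Section~\ref{secgenusone}. Both have $p=-3$ and $q=6$, so substituting $p,q$ and $r\in\{6,7\}$ into (\ref{handk}) gives the explicit words $\alpha_H=(b^{-1}a)^7a^{-3}$, $\beta_H=b^{r+1}(a^{-1}b)^6$, $\alpha_K=(ab^{-1})^6a^{-2}$ and $\beta_K=b^r(ba^{-1})^7$, while (\ref{seifmat}) gives the two Seifert matrices, whose determinants are $N=10$ for $r=6$ and $N=14$ for $r=7$.

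First I would compute a presentation matrix for $X/H[X,X]$ from $S_+$. Its first column $(p+q+1,-q)=(4,-6)$ has $\gcd 2$, so a bare row reduction cannot produce a unit in the corner; as in the $q=4,5$ cases I would first apply the Nielsen substitution $a=b_*^{\,2}a_*$, $b=b_*$, which turns the matrix into a lower-triangular form with diagonal entries $N$ and $1$. The quotient is then cyclic of order $N$, generated by the class of $a_*$, with $b_*$ expressed as a power of $a_*$. Taking the transversal to be the powers of $a_*$, the Reidemeister--Schreier method yields an explicit free basis $\mathcal{B}=\{x_0,\dots,x_N\}$ of the index-$N$, rank-$(N+1)$ subgroup $H[X,X]$, with $x_k=a_*^{\,k}b_*a_*^{-s-k}$ for $0\le k\le N-1$ and $x_N=a_*^{\,N}$, where $s$ is the shift recorded by the off-diagonal entry.

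The core computation is then to run the rewriting process on $\alpha_H$ and $\beta_H$, expressing each as a word in the $x_k$, and to exhibit two generators $x_i,x_j$ with $x_i$ occurring exactly once in $\alpha_H$ and not at all in $\beta_H$, and $x_j$ occurring exactly once in $\beta_H$ and not at all in $\alpha_H$. Solving the two relations for $x_i$ and $x_j$ shows that $\{\alpha_H,\beta_H\}\cup(\mathcal{B}\setminus\{x_i,x_j\})$ is again a free basis of $H[X,X]$, so $H$ is a free factor. I would then carry out the identical argument for $K$, using $S_-$ in place of $S_+$, the analogous substitution (on the $a$-side, $b=a_*^{\,2}b_*$), and the words $\alpha_K,\beta_K$.

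The main obstacle is precisely what forces these two knots to be singled out. In the generic $q>5$ argument the exponents appearing in the rewritten forms of $\alpha_H,\beta_H$ (and $\alpha_K,\beta_K$) are pairwise distinct once $r$ is large enough, which is what guarantees a singly-occurring pair $x_i,x_j$; but at $r=6$ and $r=7$ several of these exponents collide modulo $N$, so a generator that is ``used once'' in the generic case may now appear twice, cancel, or merge with another. The delicate point is therefore to verify, separately for each of $(q,r)=(6,6)$ and $(6,7)$ and for each of $H$ and $K$, that after these collisions a valid singly-occurring pair still survives and the elimination goes through. This is a finite, if fiddly, check performed directly on the four explicit rewritten expressions; once all four succeed, $S$ satisfies the free factor property.
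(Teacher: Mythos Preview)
Your plan is the paper's plan: Reidemeister--Schreier on a cyclic transversal, rewrite $\alpha,\beta$, eliminate two basis elements. Two points of divergence are worth flagging.

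First, the specific elimination criterion you state---a generator appearing exactly once in $\alpha_H$ and \emph{not at all} in $\beta_H$, together with a second generator with the roles reversed---does not survive the collisions at $r=6,7$. In the paper's coordinates for $P(-5,13,13)$, every generator occurring in $\alpha_H$ also occurs in $\beta_H$, and every generator occurring once in $\beta_K$ also occurs in $\alpha_K$; the symmetric criterion fails. The paper's remedy, which you have not anticipated, is a preliminary Nielsen move on the basis of $H$ (respectively $K$): replace $\{\alpha_H,\beta_H\}$ by $\{\alpha_H,\beta_H\alpha_H\}$. The product $\beta_H\alpha_H$ (or $\beta_K\alpha_K$) collapses dramatically---for $P(-5,13,13)$ it reduces to $x_0x_2x_4x_6x_8x_{10}x_0$, and for $P(-5,13,15)$ on the $K$ side it becomes just $x_{14}x_0^{-1}$---and \emph{then} a singly-occurring pair is visible. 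This extra move is exactly what makes the borderline cases go through, and without it your ``fiddly check'' will stall.

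Second, for $K$ no substitution is needed: the first column of $S_-$ is $(p+q+1,-q-1)=(4,-7)$, which already has $\gcd 1$, so plain row reduction gives a unit corner directly. The paper does this for both knots. For $H$ the paper uses your substitution $a=b_*^{\,2}a_*$ only at $r=7$; at $r=6$ it row-reduces using the second column of $S_+$ (whose entries $-7,13$ are coprime) to reach $\bigl(\begin{smallmatrix}10&0\\-2&1\end{smallmatrix}\bigr)$. Either coordinate choice works, but your blanket claim that ``a bare row reduction cannot produce a unit in the corner'' is false as stated.
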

\begin{proof}
When $J$ is $P(-5,13,13)$, $p=-3$ and $q=r=6$.
$X/H[X,X]$ has presentation matrix
\[
\left(
\begin{array}{cc}
10 & 0 \\
-2 & 1
\end{array}
\right)
\]
We use the free basis $x_k:=a^kba^{-2-k}$ for $k=0\ldots 9$ and $x_{10}:=a^{10}$.

Using this basis,
\[
	\alpha_H = x_{10}^{-1} x_{8}^{-1} x_{7}^{-1} x_{6}^{-1} x_{5}^{-1} x_{4}^{-1} x_{3}^{-1} x_{2}^{-1}
\]
and
\[
	\beta_H = x_0 x_2 x_4 x_6 x_8 x_{10} x_0 x_2 x_3 x_4 x_5 x_6 x_7 x_8 x_{10}
\]
so
\[
    \beta_H\alpha_H=x_0 x_2 x_4 x_6 x_8 x_{10} x_0 .
\]
The set $\{\alpha_H,\beta_H,x_0,x_1, x_4, x_5,x_6,x_7,x_8, x_9 ,x_{10}\}$ is a free basis of $H[X,X]$ so $H$ is a free factor of $H[X,X]$.

$X/K[X,X]$ has presentation matrix
\[
\left(
\begin{array}{cc}
1 & 1 \\
0 & 10
\end{array}
\right)
\]
We use the free basis $x_k:=a^kba^{1-k}$ for $k=0\ldots 9$ and $x_{10}:=a^{10}$.

Using this basis,
\[
	\alpha_K = x_0 x_{10}^{-1} x_{8} x_{6} x_{4} x_{2} x_{0} x_{10}^{-1} x_{9}^{-1} x_{10} x_{0}^{-1}
\]
and
\[
	\beta_K = x_{8}^{-1} x_{10} x_0^{-1} x_2^{-1} x_4^{-1} x_6^{-1} x_8^{-1} x_{10} x_0^{-1} .
\]
The set $\{\alpha_K,\beta_K,x_0,x_1, x_3,x_4,x_5,x_6,x_7,x_8 ,x_{10}\}$ is a free basis of $K[X,X]$ so $K$ is a free factor of $K[X,X]$.
\bigskip{}

When $J$ is $P(-5,13,15)$, $p=-3$, $q=6$ and $r=7$.
After making the substitution $a=b_*^2a_*$ and $b=b_*$,
$X/H[X,X]$ has presentation matrix
\[
\left(
\begin{array}{cc}
14 & 0 \\
4 & 1
\end{array}
\right)
\]
We use the free basis $x_k:=a_*^kb_*a_*^{4-k}$ for $k=0\ldots 13$ and $x_{14}:=a^{14}$.

Using this basis,
\[
	\alpha_H = x_{0} x_{14}^{-1} x_{11} x_{8} x_{5} x_{2} x_{14}^{-1}  x_{13} x_{14}  x_{0}^{-1} x_{3}^{-1} x_{7}^{-1} x_{10}^{-1}  x_{14}  x_{0}^{-1}
\]
and
\[
	\beta_H = x_0  x_{14}^{-1} x_{10} x_{6} x_2 x_{14}^{-1} x_{12} x_8 x_4 x_0 x_{14}^{-1} x_{13}^{-1} x_{14} x_2^{-1} x_5^{-1} x_8^{-1} x_{11}^{-1} x_{14} x_{0}^{-1} .
\]
The set $\{\alpha_H,\beta_H,x_0,\ldots, x_5,x_8, \ldots, x_{14}\}$ is a free basis of $H[X,X]$ so $H$ is a free factor of $H[X,X]$.

$X/K[X,X]$ has presentation matrix
\[
\left(
\begin{array}{cc}
1 & 2 \\
0 & 14
\end{array}
\right)
\]
We use the free basis $x_k:=b^kab^{2-k}$ for $k=0\ldots 13$ and $x_{14}:=b^{14}$.

Using this basis,
\[
	\alpha_K = x_0 x_{14}^{-1} x_{11} x_{8} x_{5} x_{2} x_{14}^{-1} x_{13} x_{10} x_{14} x_{0}^{-1}
\]
and
\[
	\beta_K = x_{10}^{-1} x_{13}^{-1} x_{14} x_2^{-1} x_5^{-1} x_8^{-1} x_{11}^{-1} x_{14} x_0^{-1}
\]
so
\[
    \beta_H\alpha_H=x_{14} x_{0}^{-1} .
\]

The set $\{\alpha_K,\beta_K,x_1,x_3,\ldots ,x_{14}\}$ is a free basis of $K[X,X]$ so $K$ is a free factor of $K[X,X]$.
\end{proof}

\begin{lem} \label{lemheven}
If $J$ is $P(-5,2q+1,2r+1)$ with $q$ even, $q\geq 6$ and $r\geq 8$,
then $H$ is a free factor of $H[X,X]$.
\end{lem}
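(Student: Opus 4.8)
The plan is to follow the five-step outline of Section~\ref{secgenusone}, carrying out only the $H$-side of the computation, since the lemma asks only that $H$ be a free factor of $H[X,X]$. Specializing $p=-3$ in (\ref{handk}) gives
\[
\alpha_H=(b^{-1}a)^{q+1}a^{-3}\text{\hspace{0.5cm}}\beta_H=b^{r+1}(a^{-1}b)^{q},
\]
and from (\ref{seifmat}) one computes $N=\det S_+=qr-2q-2r-2=(q-2)(r-2)-6$, which is positive since $q\geq6$ and $r\geq8$. Because the consecutive integers $q$ and $q+1$ both occur among the entries of $S_+$, the gcd of its entries is $1$, so $X/H[X,X]$ is cyclic of order $N$.

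First I would put $S_+$ into a triangular presentation matrix with a unit entry. Since $q$ is even the first column has gcd $2$, so, unlike the odd-$q$ cases, a unit cannot be produced by row operations alone; instead I would apply a unimodular change of free basis of $X$ (a substitution generalizing the $a=b_*^2a_*$ used in the $q=4$ and $P(-5,13,13)$ computations in the proof of Lemma~\ref{lem5qr}), after which the matrix becomes
\[
\left(\begin{array}{cc} 1 & \ast \\ 0 & N \end{array}\right)
\]
or the analogous form with the unit in the lower-right corner. This exhibits $X/H[X,X]\cong\Z/N$ as generated by a single generator $g$, whose powers $1,g,\dots,g^{N-1}$ form a complete set of coset representatives $\mathcal{C}$ of $H[X,X]$.

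Next, the Reidemeister--Schreier method applied to $\mathcal{C}$ yields a free basis $\mathcal{B}=\{x_0,\dots,x_N\}$ of $H[X,X]$, where $x_0,\dots,x_{N-1}$ are shifted conjugates of the other generator and $x_N=g^N$, exactly as in Lemma~\ref{lemcounter}. I would then rewrite $\alpha_H$ and $\beta_H$ as words in $\mathcal{B}$; the blocks $(b^{-1}a)^{q+1}$ and $(a^{-1}b)^q$ produce indices running through arithmetic progressions whose common difference is controlled by $q$ and whose range is controlled by $r$. The decisive step is to locate an index $i$ for which $x_i$ occurs exactly once in the rewritten $\alpha_H$ and not at all in $\beta_H$, together with an index $j$ enjoying the symmetric property for $\beta_H$ (the analogues of $x_0$ and $x_{C-1}$ in Lemma~\ref{lemcounter}). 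Given such $i$ and $j$, I can solve for $x_i$ in terms of $\alpha_H$ and the remaining generators, and for $x_j$ in terms of $\beta_H$ and the remaining generators, so that $(\mathcal{B}\setminus\{x_i,x_j\})\cup\{\alpha_H,\beta_H\}$ is again a free basis of $H[X,X]$; hence $\{\alpha_H,\beta_H\}$ extends to a free basis and $H$ is a free factor, as required.

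The main obstacle is the uniformity of this last step. Proving that some $x_i$ appears exactly once amounts to controlling the residues modulo $N$ of the index progressions: two progression terms coinciding modulo $N$, or a term wrapping past $N$, would spoil the ``appears exactly once'' property. The hypotheses $q\geq6$ and $r\geq8$ are precisely what force these indices to be distinct and to remain in range, and I expect that a handful of small values of $r$ will have to be separated into sub-cases (compare the $r=13$ versus $r\geq14$ split for $q=3$ and the $r=9,10,\geq11$ split for $q=4$ in the proof of Lemma~\ref{lem5qr}), and that the parity of $r$ may enter through the substitution of the first step. Apart from this index bookkeeping, the computation is routine.
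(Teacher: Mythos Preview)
Your plan takes a different route from the paper and leaves the decisive step only sketched.

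The paper does \emph{not} pass to a cyclic presentation via a substitution. Although your observation that $X/H[X,X]$ is cyclic of order $N$ is correct, the paper keeps the generators $a,b$ and works with the triangular matrix
\[
\left(\begin{array}{cc} 2 & -r \\ 0 & w \end{array}\right),\qquad w=\tfrac{q}{2}\,r-q-r-1,\quad N=2w,
\]
using the $2w$ coset representatives $\{a^ib^j:0\le i\le1,\ 0\le j<w\}$. The Reidemeister--Schreier basis then splits into an $x$-family $x_1,\dots,x_w$ and a $y$-family $y_0,\dots,y_w$. The key simplification you are missing is that
\[
\beta_H\alpha_H=b^{r+1}(a^{-1}b)^q(b^{-1}a)^{q+1}a^{-3}=b^{r}a^{-2},
\]
and in this basis $b^r a^{-2}=y_0^{-1}$, a \emph{single} generator. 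Since $\{\beta_H\alpha_H,\beta_H\}$ is also a free basis of $H$, one of the two replacements is now automatic; it remains only to find one generator occurring exactly once in $\beta_H$, and $y_1$ does the job after a short check that the indices $\delta(i)=w-i(r-2)$ never equal $0$. This gives a uniform argument with no sub-cases on $r$ and no parity split; the only inequality needed is $r<w$, which follows from $q\ge6$ and $r\ge8$.

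Your approach, by contrast, requires an unspecified substitution, rewriting $\alpha_H$ and $\beta_H$ separately in a length-$(N{+}1)$ basis, and then locating \emph{two} separating indices; you already anticipate sub-cases on small $r$ and on the parity of $r$. That program may well go through, but it is more intricate than necessary, and as written the ``locate an index $i$'' step is an expectation rather than an argument. The $\beta_H\alpha_H$ collapse is what makes the even-$q$ case clean, and the same device drives the paper's odd-$q$ lemma as well.
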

\begin{proof}
Let $c$ be the integer such that $q=2c$.
$X/H[X,X]$ has a presentation matrix
\[
\left(
\begin{array}{cc}
2 & -r \\
0 & w
\end{array}
\right)
\]
where $w=cr-2c-r-1$ and $N=2w$.

We have the following set of coset representatives.
\[
    \mathcal{C}=\{1,b,b^2,\ldots,b^{w-1},a,ab,ab^2,\ldots,ab^{w-1}\}
\]
We apply Reidemeister-Schreier to find a free basis of $H[X,X]$.
In the following computations, we assume that the coset representative of $a^2$, $\overline{a^2}$ is $b^r$.
For this to be correct, it must be true that $r<w$,
which we verify here.

Since $q\geq 6$, $c\geq 2$, and $r\geq 8$ so
\begin{align*}
    w=&cr-2c-r-1\\
    =&(c-3)(r-2)+(r-7)+r \\
    > &r .
\end{align*}

We apply Reidemeister-Schreier to find $x_{c,x}=cx(\overline{cx})^{-1}$ for each $c\in\mathcal{C}$ and $x\in\{a,b\}$.
\begin{align*}
    x_{b^i,a} = & b^ia(\overline{b^ia})^{-1}=
    \left\{
    \begin{array}{ll}
        b^iab^{-i}a^{-1}     & \text{if } 0 < i\leq w-1 \\
        1     & \text{if } i=0
    \end{array}
    \right. \\
    x_{b^i,b} = & b^{i+1}(\overline{b^{i+1}})^{-1}=
    \left\{
    \begin{array}{ll}
        1     & \text{if } 0 \leq i < w-1 \\
        b^w     & \text{if } i=w-1
    \end{array}
    \right. \\
    x_{ab^i,a} = & ab^ia(\overline{ab^ia})^{-1}=
    \left\{
    \begin{array}{ll}
        ab^iab^{-i-r}     & \text{if } 0 \leq i < w-r \\
        ab^iab^{w-i-r}     & \text{if } w-r \leq i \leq w-1
    \end{array}
    \right. \\
    x_{ab^i,b} = & ab^{i+1}(\overline{ab^{i+1}})^{-1}=
    \left\{
    \begin{array}{ll}
        1     & \text{if } 0 \leq i < w-1 \\
        ab^wa^{-1}     & \text{if } i=w-1
    \end{array}
    \right.
\end{align*}
The non-trivial elements $x_{c,x}$ form a basis $\{x_1, \ldots, x_{w},y_0,\ldots,y_{w}\}$ where
\[
    x_i=
    \left\{
    \begin{array}{ll}
        b^iab^{-i}a^{-1} & \text{if } 1\leq i \leq w-1 \\
        b^w & \text{if } i=w 
    \end{array}
    \right.
\]
and
\[
    y_i=
    \left\{
    \begin{array}{ll}
        ab^iab^{-i-r} & \text{if } 0 \leq i <w-r \\ 
        ab^iab^{w-i-r} & \text{if } w-r \leq i < w \\ 
        ab^wa^{-1} & \text{if } i=w
    \end{array}
    \right. .
\]

Using this basis,
\[
    \beta_H\alpha_H=y_0^{-1}
\]
and
\[
    \beta_H=y_{1}^{-1} x_{2}^{-1} \prod_{i=1}^{c-2} (y_{\delta(i)+1}^{-1} x_{\delta(i)+2}^{-1}) y_{w-2}^{-1} x_{w-1}^{-1} x_{w}
\]
where
\[
    \delta(i)=w-i(r-2) .
\]

We claim that for all $i$, $\delta(i)\neq0$.
Since $w=(c-1)(r-2)-3$,
\[
    \delta(i)=w-i(r-2)=(r-2)(c-i-1)-3
\]
so if $\delta(i)=0$ then $(r-2)(c-i-1)=3$.
However, since $r\geq 8$, $r-2$ does not divide 3.

Thus, $y_1$ only appears once in $\beta_H$
so the set $\{\beta_H\alpha_H,\beta_H,x_1,\ldots,x_{w},y_2,\ldots,y_{w}\}$ is a free basis of $H[X,X]$.
Since $\{\beta_H\alpha_H,\beta_H\}$ is a free basis of $H$,
$H$ is a free factor of $H[X,X]$.
\end{proof}

\begin{lem}\label{lemhodd}
If $J$ is $P(-5,2q+1,2r+1)$ with $q$ odd and $q\geq 7$,
then $H$ is a free factor of $H[X,X]$.
\end{lem}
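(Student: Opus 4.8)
The plan is to apply the five-step procedure of Section~\ref{secgenusone} directly to $H$, following the pattern already established for the small odd values $q=3$ and $q=5$. Write $q=2c+1$ with $c\geq 3$. Since $q$ is odd we have $\gcd(q-2,q)=1$, so the Seifert matrix $S_+$ can be cleared by integer row operations to the form $\left(\begin{array}{cc} 1 & e \\ 0 & N\end{array}\right)$, where $e=(c-1)r-2c-2$ and $N=(2c-1)(r-2)-6$; one checks $N>0$ whenever $q\geq 7$ and $r\geq q$. Thus $X/H[X,X]\cong\Z/N\Z$ is cyclic, generated by the class of $b$, with the class of $a$ equal to $b^{-e}$. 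I would then take $\mathcal{C}=\{1,b,\ldots,b^{N-1}\}$ as coset representatives and, after the usual adjustment by powers of $b^N$, obtain the free basis $\mathcal{B}=\{x_0,\ldots,x_N\}$ with $x_k=b^kab^{e-k}$ for $0\leq k<N$ and $x_N=b^N$.

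Next I would run the Reidemeister--Schreier rewriting on $\alpha_H=(b^{-1}a)^{q+1}a^{-3}$ and $\beta_H=b^{r+1}(a^{-1}b)^q$. The essential bookkeeping is that each letter $a$ read at coset exponent $p$ emits $x_p$, each $a^{-1}$ emits $x_{p+e}^{-1}$, and each $b^{\pm1}$ is trivial except when it wraps the exponent past $N$, contributing $x_N^{\pm1}$. Reading $\alpha_H$, the $a$'s inside the $q+1$ blocks emit generators whose indices form an arithmetic progression of common difference $-(e+1)\pmod N$, while the three tail $a^{-1}$'s emit generators stepping by $+e$; reading $\beta_H$, the only nontrivial generators $x_k$ with $k<N$ come from the $q$ copies of $a^{-1}$, whose indices form an arithmetic progression of common difference $e+1\pmod N$. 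With these explicit index lists in hand, I would locate one generator occurring exactly once in the reduced word for $\alpha_H$ and a second, distinct generator occurring exactly once in that for $\beta_H$; solving for these two pivots realizes the change of generating set from $\mathcal{B}$ to $\{\alpha_H,\beta_H\}\cup(\mathcal{B}\setminus\{\text{two pivots}\})$ as a sequence of Nielsen transformations, so the latter is a free basis of $H[X,X]$ and $H$ is a free factor.

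The hard part will be guaranteeing that uniquely-occurring pivots exist at all: this is the analogue of the $\delta(i)\neq 0$ computation in Lemma~\ref{lemheven}. Because the span of the $\beta_H$-progression is roughly $(2c+1)(c-1)r$ while $N\approx(2c-1)r$, the progression wraps several times around $\Z/N\Z$, so one cannot hope for all terms to be distinct; instead I must show that at least one index is hit exactly once. The natural reduction is to control $\gcd(e+1,N)$ and $\gcd(e,N)$: if the progression step is coprime (or nearly coprime) to $N$, then the $q\le r$ terms land in a residue class of size $N/\gcd$ that exceeds $q$, forcing distinctness and hence a valid pivot. I expect the hypothesis $q\geq 7$ odd, together with $q\leq r$, to be exactly what makes $N$ large enough relative to the progression lengths for this to go through, and that a small number of boundary values of $r$ (or residues of $r$ modulo the step) will need separate explicit verification, as in the $q=3,4,5$ arguments. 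The failures recorded at the boundary for $q=5$, namely $P(-5,11,11)$ and $P(-5,11,13)$ in Lemma~\ref{lemjustdontwork}, show that genuine collisions occur once $N$ is too small, which is precisely why the threshold $q\geq 7$ is imposed here.
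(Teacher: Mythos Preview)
Your setup matches the paper exactly: with $q=2c+1$ and $c\geq 3$ one has $e=v=cr-2c-r-2$, $N=2v+r$, and the basis $x_k=b^kab^{v-k}$. You also correctly identify that controlling $\gcd(e+1,N)$ is the relevant quantity. Two things separate your sketch from a proof.

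First, your pessimism about $\beta_H$ is unwarranted. The progression does wrap roughly $c$ times around $\Z/N\Z$, but wrapping does not force collisions. Carrying out the gcd computation you allude to actually gives $N=2(v+1)+(r-2)$ and $v+1=(c-1)(r-2)-3$, so $\gcd(N,v+1)=\gcd(r-2,3)\leq 3$; since $N=(2c-1)(r-4)+4c-8>3(2c-1)$ whenever $c\geq3$ and $r\geq7$, all $2c$ indices $\epsilon(i)=2+i(v+1)\bmod N$ are genuinely distinct, and $x_2=x_{\epsilon(0)}$ is a clean pivot in $\beta_H$. No boundary values of $r$ need separate treatment.

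Second---and this is the idea you are missing---the paper never rewrites $\alpha_H$. It replaces the basis $\{\alpha_H,\beta_H\}$ of $H$ by $\{\beta_H\alpha_H,\beta_H\}$ and observes that $(a^{-1}b)^q(b^{-1}a)^{q+1}$ telescopes, giving $\beta_H\alpha_H=b^ra^{-2}$, which rewrites as the three-letter word $x_{v+r}^{-1}x_Nx_0^{-1}$. The pivot $x_0$ is immediate, and since $x_2$ does not occur in this short word the two substitutions do not interfere; thus $\{\beta_H\alpha_H,\beta_H,x_1,x_3,\ldots,x_N\}$ is a free basis. Your proposed direct analysis of $\alpha_H$ would produce a $(2c+5)$-letter word with two interacting progressions and would still require separating its pivot from that of $\beta_H$; the product trick eliminates all of this work.
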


\begin{proof}
Let $c$ be the integer such that $q=2c+1$.
$X/H[X,X]$ has a presentation matrix
\[
\left(
\begin{array}{cc}
1 & v \\
0 & N
\end{array}
\right)
\]
where $v=cr-2c-r-2$ and $N=2cr-4c-r-4=2v+r$.

We use the free basis $x_k=b^kab^{v-k}$ for $k=0,\ldots,N-1$ and $x_N=b^N$.
Using this basis,
\[
	\beta_H\alpha_H = x_{v+r}^{-1} x_{N} x_0^{-1}
\]
and
\[
	\beta_H = x_{v+r+1}^{-1}\prod_{i=0}^{2c-1} y_i
\]
where
\[
    y_i=\left\{
    \begin{array}{ll}
    x_{\epsilon(i)}^{-1}     & \text{if }\epsilon(i)<N-v-1 \\
    x_{\epsilon(i)}^{-1}x_N     & \text{if }\epsilon(i)\geq N-v-1 
    \end{array}
    \right.
\]
and 
\[
    \epsilon(i)=2+i(v+1) \mod N.
\]
Since $q\geq 7$, $c\geq 3$, and since $r\geq 7$,
\[
    v=cr-2c-r-2=(c-2)(r-2)+r-6>1
\]
This means that
\[
    N=2v+r>v+r+1>v+r>0
\]
so $x_{v+r}$ and $x_{v+r+1}$ are valid generators.

We claim that for each $i=0,\ldots,2c-1$, $\epsilon(i)$ is distinct.
Suppose that $\epsilon(i)=\epsilon(j)$ for some $i$ and $j$.
Then, $(j-i)(v+1)$ is a multiple of $N$.
In particular, $N$ divides $(j-i)\gcd(N,v+1)$.
Applying the Euclidean algorithm to $N$ and $v+1$,
we have that
\[
    N=2(v+1)+r-2
\]
and
\[
    v+1=(c-1)(r-2)-3
\]
so
\[
    \gcd(N,v+1)=\gcd(r-2,3)\leq 3.
\]
The maximum value of $j-i$ is $2c-1$.
It follows that
\[
    N\leq3(2c-1).
\]

However, since $c\geq 3$
and $r\geq 7$,
\begin{align*}
    N=& 2cr-4c-r-4\\
    =&(2c-1)(r-4)+4c-8\\
    \geq & 3(2c-1)+4>3(2c-1)
\end{align*}
which is a contradiction.

Thus $x_{\epsilon(0)}=x_2$ only appears once in $\beta_H$ so
the set $\{\beta_H\alpha_H,\beta_H,x_1,x_3,\ldots,x_{N}\}$ is a free basis of $H[X,X]$.
Therefore, $H$ is a free factor of $H[X,X]$.
\end{proof}

\begin{lem}\label{lemk0}
If $J$ is $P(-5,2q+1,2r+1)$ with $q\equiv 0 \mod 3$, $q\geq 6$, and $r\geq 8$,
then $K$ is a free factor of $K[X,X]$.
\end{lem}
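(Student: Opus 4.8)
The plan is to run the Reidemeister--Schreier procedure of Section~\ref{secgenusone} on the $K$-side, following the template of Lemmas~\ref{lemheven} and~\ref{lemhodd} but with the congruence $q\equiv 0\pmod 3$ doing the essential bookkeeping. Putting $p=-3$ in~(\ref{handk}) gives $\alpha_K=(ab^{-1})^q a^{-2}$ and $\beta_K=b^r(ba^{-1})^{q+1}$, whose abelianizations are the rows of the matrix $S_-$ of~(\ref{seifmat}) specialized to $p=-3$, namely $\left(\begin{smallmatrix} q-2 & -q\\ -q-1 & q+r+1\end{smallmatrix}\right)$, the presentation matrix of $X/K[X,X]$, with $\det S_-=N=qr-2q-2r-2=(q-2)(r-2)-6$. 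The first step is to read off the group structure. Adding the two rows gives the relation $3a=(r+1)b$, and killing the class of $b$ collapses $S_-$ to $\Z/\gcd(q-2,q+1)\Z=\Z/\gcd(q-2,3)\Z$. Where the $H$-side generation by $b$ hinged on the parity of $q$ through $\gcd(q-2,q)=\gcd(q-2,2)$, on the $K$-side it hinges on $q\bmod 3$, since the relevant gcd is $\gcd(q-2,q+1)=\gcd(q-2,3)$; this is the structural reason the $K$-side splits modulo $3$. Since $q\equiv 0\pmod 3$ forces $3\nmid q-2$, the gcd is $1$, so the class of $b$ generates the cyclic group $X/K[X,X]\cong\Z/N\Z$ (note $q\ge 6$ and $r\ge 8$ give $N\ge 18>0$), $X/K[X,X]$ has a Hermite presentation $\left(\begin{smallmatrix}1 & v\\ 0 & N\end{smallmatrix}\right)$, and $\mathcal C=\{1,b,\dots,b^{N-1}\}$ is a set of coset representatives of $K[X,X]$.

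With $\mathcal C$ in hand I would apply Reidemeister--Schreier to obtain the free basis $\{x_0,\dots,x_N\}$ of $K[X,X]$, where $x_k=b^k a\,b^{v-k}$ for $0\le k\le N-1$ and $x_N=b^N$, exactly as in Lemma~\ref{lemhodd}; here $v$ is fixed by $\overline a=b^{-v}$, and its explicit value is pinned down by $q\equiv 0\pmod 3$ through the congruences $(q-2)v\equiv -q$ and $3v\equiv -(r+1)\pmod N$. Reading off the rewrites, each syllable $ab^{-1}$ of $\alpha_K$ (respectively $ba^{-1}$ of $\beta_K$) shifts the active coset by a fixed amount, so the rewritten words are products $\prod_i x_{\varphi(i)}^{\pm 1}$ whose indices $\varphi(i)$ run through an arithmetic progression modulo $N$ with common difference $\pm(v+1)$. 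As in the companion lemmas it is convenient to replace the generating pair $\{\alpha_K,\beta_K\}$ of $K$ by the equivalent pair $\{\beta_K\alpha_K,\beta_K\}$: the product $\beta_K\alpha_K$ has abelianization $(-3,r+1)$, which maps to $0$ in $\Z/N\Z$ by the relation $3a=(r+1)b$, so it rewrites as a short word, while $\beta_K$ carries the long progression whose length grows with $q$.

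The decisive step is to exhibit a basis element occurring exactly once in one of these two rewrites and not at all in the other; that element can then be solved for and exchanged, exhibiting $\{\beta_K\alpha_K,\beta_K\}$ (equivalently $\{\alpha_K,\beta_K\}$) as part of a free basis of $K[X,X]$ and proving that $K$ is a free factor. This reduces to a distinctness statement for the progression $\varphi(i)\bmod N$, handled as in Lemma~\ref{lemhodd}: a coincidence $\varphi(i)=\varphi(j)$ forces $N\mid (j-i)(v+1)$, hence $N\mid (j-i)\gcd(N,v+1)$, and the Euclidean algorithm applied to $N$ and $v+1$ bounds $\gcd(N,v+1)$ by a small constant. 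Comparing that bound with the number of syllables, and using $q\ge 6$ together with $r\ge 8$ to keep every index inside the admissible range $0\le\varphi(i)\le N$, rules out all coincidences and isolates the required single-occurrence generator.

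I expect the main obstacle to be exactly this combinatorial bookkeeping rather than the group theory: computing $v$ and the coset representative $\overline a$ correctly, tracking the $\pm 1$ syllable contributions so that the arithmetic progression $\varphi$ is exact, and then pushing the gcd and range inequalities through tightly enough to guarantee a lone generator. The clean consequence of the hypothesis is purely that $b$ generates $X/K[X,X]$; the difficulty lies in the modular distinctness argument and in the small-$r$ boundary, which is precisely why the bound $r\ge 8$ is imposed and why the cases $P(-5,13,13)$ and $P(-5,13,15)$, where $r<8$, were dispatched separately.
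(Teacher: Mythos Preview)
Your proposal is correct and follows essentially the same route as the paper: reduce $S_-$ to the Hermite form $\left(\begin{smallmatrix}1 & v\\ 0 & N\end{smallmatrix}\right)$ using $q\equiv 0\pmod 3$ (the paper sets $q=3c$, $v=cr-2c-r-1$, $N=3v+r+1$), take the Schreier basis $x_k=b^kab^{v-k}$, $x_N=b^N$, replace $\{\alpha_K,\beta_K\}$ by $\{\beta_K\alpha_K,\beta_K\}$ (noting $\beta_K\alpha_K=b^{r+1}a^{-3}$ is short), and use the Euclidean algorithm on $N$ and $v+1$ to force the indices $\zeta(i)=1+i(v+1)\bmod N$ in the rewrite of $\beta_K$ to be distinct. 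The only point to sharpen is that you need \emph{two} single-occurrence generators, not one: the paper exhibits $x_0$ (once in $\beta_K\alpha_K$, absent from $\beta_K$) and $x_1=x_{\zeta(0)}$ (once in $\beta_K$, absent from $\beta_K\alpha_K$), so that $\{\beta_K\alpha_K,\beta_K,x_2,\dots,x_N\}$ is the desired free basis.
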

\begin{proof}
Let $c$ be the integer such that $q=3c$.
$X/K[X,X]$ has a presentation matrix
\[
\left(
\begin{array}{cc}
1 & v \\
0 & N
\end{array}
\right)
\]
where $v=cr-2c-r-1$ and $N=3cr-6c-2r-2=3v+r+1$.

We use the free basis $x_k=b^kab^{v-k}$ for $k=0,\ldots,N-1$ and $x_N=b^N$.
Using this basis,
\[
	\beta_K\alpha_K = x_{v+r+1}^{-1} x_{2v+r+1}^{-1} x_{N} x_0^{-1}
\]
and
\[
	\beta_K = x_{v+r+1}^{-1} x_{2v+r+2}^{-1}\prod_{i=0}^{3c-2} y_i
\]
where
\[
    y_i=\left\{
    \begin{array}{ll}
    x_{\zeta(i)}^{-1}     & \text{if }\zeta(i)<N-v-1 \\
    x_{\zeta(i)}^{-1}x_N     & \text{if }\zeta(i)\geq N-v-1 
    \end{array}
    \right.
\]
and 
\[
    \zeta(i)=1+i(v+1) \mod N.
\]
Since $q\geq 6$, $c\geq 2$, and since $r\geq 8$,
\[
    v=cr-2c-r-1=(c-1)(r-5)+3c-6>1
\]
This means that
\[
    N=3v+r+1>2v+r+2>2v+r+1>v+r+1>0
\]
so $x_{v+r+1}$, $x_{2v+r+1}$ and $x_{2v+r+2}$ are valid generators.

Suppose that $\zeta(i)=\zeta(j)$ for some $i$ and $j$.
Then, $N$ divides $(j-i)\gcd(N,v+1)$.
Applying the Euclidean algorithm to $N$ and $v+1$,
we have that
\[
    N=3(v+1)+r-2
\]
and
\[
    v+1=(c-1)(r-2)-2
\]
so
\[
    N\leq2(3c-2).
\]

However, since $c\geq 2$
and $r\geq 8$,
\[
    N = 3cr-6c-2r-2 = (3c-2)(r-4)+6c-10 > 2(3c-2)
\]
so $\zeta(i)$ is distinct for each $i=0,\ldots,3c-2$.
Thus $x_{\zeta(0)}=x_1$ only appears once in $\beta_K$ so
the set $\{\beta_K\alpha_K,\beta_K,x_2,\ldots,x_{N}\}$ is a free basis of $K[X,X]$.
Therefore, $K$ is a free factor of $K[X,X]$.
\end{proof}

\begin{lem}\label{lemk1}
If $J$ is $P(-5,2q+1,2r+1)$ with $q\equiv 1 \mod 3$ and $q\geq 7$,
then $K$ is a free factor of $K[X,X]$.
\end{lem}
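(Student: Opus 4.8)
The plan is to follow the template established in Lemmas~\ref{lemhodd} and~\ref{lemk0}, now specializing to the residue class $q\equiv 1\pmod 3$. First I would write $q=3c+1$, so that $q\geq 7$ becomes $c\geq 2$, and since $p=-3$ for the $P(-5,2q+1,2r+1)$ family, compute the determinant $N=\det S_-=qr-2q-2r-2$ from the Seifert matrix $S_-$ in~(\ref{seifmat}). Because $\gcd(q-2,q+1)=\gcd(q-2,3)=1$ when $q\equiv 1\pmod 3$, the first column can be cleared to a unit by integer row operations without any substitution (in contrast to the $q\equiv 2$ case, where $3\mid q-2$), so $X/K[X,X]$ has a presentation matrix of the form $\left(\begin{smallmatrix} 1 & v \\ 0 & N\end{smallmatrix}\right)$ for an off-diagonal entry $v$ that I would read off in terms of $c$ and $r$. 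This yields the Schreier transversal $\mathcal{C}=\{1,b,\ldots,b^{N-1}\}$ and, via Reidemeister--Schreier, the free basis $x_k=b^k a b^{v-k}$ for $0\leq k\leq N-1$ together with $x_N=b^N$, exactly as in the $q\equiv 0$ case.

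Next I would rewrite the generators of $K$ in this basis. By analogy with Lemma~\ref{lemk0}, a convenient product such as $\beta_K\alpha_K$ should collapse to a short word, while $\beta_K$ should appear as a product $\prod_i y_i$ (together with a few leading factors) in which the $i$th factor is $x_{\zeta(i)}^{\pm 1}$, possibly multiplied by $x_N$, and the index runs through an arithmetic progression $\zeta(i)\equiv (\text{const})+i(v+1)\pmod N$ with $i$ ranging over a set of size linear in $c$. The aim is to isolate one generator---presumably $x_{\zeta(0)}$---occurring exactly once in the rewritten words, so that swapping it for $\beta_K$ turns $\mathcal{B}$ into a free basis of $K[X,X]$ two of whose members freely generate $K=\langle\alpha_K,\beta_K\rangle$. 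Exhibiting such a basis is precisely what it means for $K$ to be a free factor of $K[X,X]$.

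The crux, and the step I expect to be the main obstacle, is showing that the indices $\zeta(i)$ are pairwise distinct, since only then can I be sure that the chosen generator is not reintroduced by a later factor. As in Lemma~\ref{lemk0}, a collision $\zeta(i)=\zeta(j)$ forces $N\mid (j-i)\gcd(N,v+1)$, and I would bound $\gcd(N,v+1)$ by running the Euclidean algorithm on $N$ and $v+1$; the $q\equiv 1$ arithmetic should again produce a small constant bound (the $q\equiv 0$ case gave $\gcd(r-2,3)$). Distinctness then reduces to an inequality of the shape $N>(\#\,\text{factors})\cdot\gcd(N,v+1)$, which I would verify from $c\geq 2$ and the standing hypothesis $r\geq q\geq 7$. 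The delicate parts specific to this congruence class are obtaining the exact value of $v$ and the precise off-by-one shifts in the rewriting, and confirming that no accidental index coincidence occurs for the smallest admissible values of $c$ and $r$; should such a coincidence appear, those finitely many cases would have to be dispatched separately, as was done for small parameters elsewhere in the paper.
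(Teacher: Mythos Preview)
Your plan is correct and follows the paper's argument essentially line for line. The only point on which the analogy with Lemma~\ref{lemk0} is slightly misleading is a sign: in this congruence class the off-diagonal entry comes out negative, so the paper writes the presentation matrix as $\left(\begin{smallmatrix}1 & -v\\ 0 & N\end{smallmatrix}\right)$ with $v=cr-2c-1>0$ and $N=3v-r-1$, the basis is $x_k=b^kab^{-v-k}$, the arithmetic-progression step is $v-1$ rather than $v+1$, and the Euclidean bound becomes $\gcd(N,v-1)=\gcd(r-2,2)\le 2$; with that adjustment everything proceeds exactly as you describe.
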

\begin{proof}
Let $c$ be the integer such that $q=3c+1$.
$X/K[X,X]$ has a presentation matrix
\[
\left(
\begin{array}{cc}
1 & -v \\
0 & N
\end{array}
\right)
\]
where $v=cr-2c-1$ and $N=3cr-6c-r-4=3v-r-1$.

We use the free basis $x_k=b^kab^{-v-k}$ for $k=0,\ldots,N-1$ and $x_N=b^N$.
Using this basis,
\[
	\beta_K\alpha_K = x_N^{-1} x_{2v}^{-1} x_{v}^{-1} x_0^{-1}
\]
and
\[
	\beta_K = x_N^{-1} x_{2v}^{-1} x_{v+1}^{-1}\prod_{i=0}^{3c-1} y_i
\]
where
\[
    y_i=\left\{
    \begin{array}{ll}
    x_{\eta(i)}^{-1}     & \text{if }\eta(i)<N-v+1 \\
    x_{x_N^{-1}\eta(i)}^{-1}     & \text{if }\eta(i)\geq N-v+1 
    \end{array}
    \right.
\]
and 
\[
    \eta(i)=2-i(v-1) \mod N.
\]
Since $q\geq 7$, $c\geq 2$, and since $r\geq 7$,
\[
    v=cr-2c-1=(c-1)(r-8)+4c-8+r+1>r+1
\]
This means that
\[
    N=3v-r-1>2v>v+1>v>0
\]
so $x_{v}$, $x_{v+1}$ and $x_{2v}$ are valid generators.

Suppose that $\eta(i)=\eta(j)$ for some $i$ and $j$.
Then, $N$ divides $(j-i)\gcd(N,v-1)$.
Applying the Euclidean algorithm to $N$ and $v-1$,
we have that
\[
    N=3(v-1)-(r-2)
\]
and
\[
    v-1=c(r-2)-2
\]
so
\[
    N\leq2(3c-1).
\]

However, since $c\geq 2$
and $r\geq 7$,
\[
    N = 3cr-6c-r-4 = (3c-1)(r-4)+6c-8 > 2(3c-1)
\]
so $\eta(i)$ is distinct for each $i=0,\ldots,3c-2$.
Thus $x_{\eta(0)}=x_2$ only appears once in $\beta_K$ so
the set $\{\beta_K\alpha_K,\beta_K,x_1,x_3,\ldots,x_{N}\}$ is a free basis of $K[X,X]$.
Therefore, $K$ is a free factor of $K[X,X]$.
\end{proof}

\begin{lem}\label{lemk2}
If $J$ is $P(-5,2q+1,2r+1)$ with $q\equiv 2 \mod 3$ and $q\geq 8$,
then $K$ is a free factor of $K[X,X]$.
\end{lem}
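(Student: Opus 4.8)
The plan is to handle $q\equiv 2\pmod 3$ exactly as Lemma \ref{lemheven} handles even $q$ for $H$, with $3$ playing the role of $2$. Write $q=3c+2$ with $c\geq 2$ (since $q\geq 8$). The reason a separate argument is forced is arithmetic: both entries of the first column of $S_-$, namely $q-2=3c$ and $-(q+1)=-3(c+1)$, are divisible by $3$, so no row reduction produces a $1$ in the corner and the presentation matrix of $X/K[X,X]$ cannot be brought to the shape with a unit top-left entry exploited in Lemmas \ref{lemk0} and \ref{lemk1}. Instead, following the device of Lemma \ref{lemheven} (the substitution $a=b_*^2a_*,\ b=b_*$ there), I would first apply a generator substitution to put the matrix in the form
\[
\left(\begin{array}{cc} 3 & \ast \\ 0 & w \end{array}\right),\qquad w=cr-2c-2,
\]
so that $N=3w$ and the class of $a$ carries the order-$3$ part of $X/K[X,X]$.

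Next I would take the transversal $\mathcal{C}=\{a^jb^i:0\le j\le 2,\ 0\le i\le w-1\}$ of $K[X,X]$, which has exactly $N=3w$ elements, and run the Reidemeister--Schreier rewriting process to obtain an explicit free basis of $K[X,X]$ of rank $N+1$. After modifying the Schreier generators on the right by powers of $b$, as in the other $K$-cases, I expect a family $x_0,\ldots,x_N$ together with the two ``$a$-shift'' generators arising from the nontrivial powers $j=1,2$. I would then rewrite $\beta_K\alpha_K$ and $\beta_K$ in this basis, anticipating (by analogy with $\beta_H\alpha_H=y_0^{-1}$ in Lemma \ref{lemheven}) that $\beta_K\alpha_K$ collapses to a short word while $\beta_K$ expands to a long product $\prod_i y_{\mu(i)}$ whose indices run through an arithmetic progression $\mu(i)\equiv c_0+i\,s\pmod N$ with common difference $s$ congruent to a multiple of $r-2$.

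The decisive step is to exhibit a single basis generator occurring exactly once in $\beta_K$ and not at all in $\beta_K\alpha_K$; solving for it shows that exchanging it for $\beta_K$ (and the other short generator for $\beta_K\alpha_K$) preserves a generating set of the correct cardinality $N+1$, hence a free basis, so $\{\beta_K\alpha_K,\beta_K\}$ extends and $K$ is a free factor of $K[X,X]$. Proving the ``exactly once'' claim is the main obstacle, and it is genuinely more delicate here than in Lemmas \ref{lemk0} and \ref{lemk1}: there the progression step differed from $v$ by $-2$, forcing $\gcd(N,\text{step})=\gcd(r-2,2)\le 2$, whereas now $3\mid N$ can push the relevant gcd up to $3$ or $6$ and weaken the collision bound. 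I would run the Euclidean algorithm on $N=3w$ and $s$, bound the number of distinct residues $\mu(i)$ by a small multiple of $\gcd(r-2,3)$, and then use $c\geq 2$ together with $r\geq q\geq 8$ to show $N$ strictly exceeds that multiple of the number of factors, so the progression is injective and the target index is hit once (the analogue of $\delta(i)\neq 0$ in Lemma \ref{lemheven}). Along the way I would check, as in the other cases, that every index $\mu(i)$ lies in $\{0,\ldots,N\}$ so that all generators invoked are valid.

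Finally, I would record that this establishes $K$ as a free factor of $K[X,X]$ for $q\equiv 2\pmod 3$. Combined with Lemmas \ref{lemheven} and \ref{lemhodd} on the $H$-side and Lemmas \ref{lemk0} and \ref{lemk1} for the other two residue classes, this completes the verification that $S$ satisfies the free factor property throughout the remaining range of Lemma \ref{lem5qr}. I do not anticipate any obstruction beyond the injectivity bound; the small parameters excluded by $q\geq 8$ and the standing hypothesis $q\le r$ are precisely those where the progression could wrap around modulo $N$, and these are covered by the separately treated small cases.
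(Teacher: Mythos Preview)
Your overall plan is exactly the paper's: model Lemma~\ref{lemk2} on Lemma~\ref{lemheven} with $3$ replacing $2$, write $q=3c+2$, use the presentation matrix $\left(\begin{smallmatrix}3&-(r+1)\\0&w\end{smallmatrix}\right)$ with $w=cr-2c-2$ and $N=3w$, take the transversal $\{a^jb^i:0\le j\le 2,\ 0\le i\le w-1\}$, and show $\beta_K\alpha_K$ collapses while one Schreier generator occurs exactly once in $\beta_K$.

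Two points where the paper's execution is simpler than what you sketch. First, no generator substitution is needed to reach the matrix above; row reduction of $S_-$ suffices. More importantly, because the quotient $X/K[X,X]\cong(\Z/3\Z)\times(\Z/w\Z)$ is not cyclic, the Schreier basis does not naturally form a single family $x_0,\dots,x_N$ indexed modulo $N$; it splits into three families $x_1,\dots,x_w$, $y_1,\dots,y_w$, $z_0,\dots,z_w$ (one for each value of $j$). With this bookkeeping, $\beta_K\alpha_K=z_0^{-1}$, and the generators appearing in $\beta_K$ come in triples $z_{\delta(i)},y_{\delta(i)+1},x_{\delta(i)+2}$ where $\delta(i)=w-i(r-2)$ for $1\le i\le c-1$, plus the boundary terms $z_0,y_1,x_2,z_{w-2},y_{w-1},y_w$. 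So the ``exactly once'' check for $y_1$ reduces to showing $\delta(i)\neq 1$ for $1\le i\le c-1$, i.e.\ $(r-2)(c-i)\neq 3$, which is immediate since $r-2\ge 6$. There is no need for a Euclidean-algorithm bound on $\gcd(N,s)$ or a full injectivity argument for an arithmetic progression modulo $N$; that machinery from Lemmas~\ref{lemk0}--\ref{lemk1} is unnecessary here precisely because the three-family basis separates the $y$-indices from the others.
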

\begin{proof}
Let $c$ be the integer such that $q=3c+2$.
$X/K[X,X]$ has a presentation matrix
\[
\left(
\begin{array}{cc}
3 & -(r+1) \\
0 & w
\end{array}
\right)
\]
where $w=cr-2c-2$ and $N=3w$.

We have the following set of coset representatives.
\[
    \mathcal{C}=\{1,b,b^2,\ldots,b^{w-1},a,ab,\ldots,ab^{w-1}, a^2,a^2b,\ldots,a^2b^{w-1}\}
\]
We apply Reidemeister-Schreier to find a free basis of $K[X,X]$.

Since $q\geq 6$, $c\geq 2$, and $r\geq 8$ so $r+1<w$
\begin{align*}
    w=&cr-2c-2\\
    =&(c-2)(r-2)+(r-7)+r+1 \\
    > &r+1 .
\end{align*}
Thus, the coset representative, $\overline{a^3}$ is $b^{r+1}$.

We apply Reidemeister-Schreier to find a basis $\{x_1, \ldots, x_{w},y_1,\ldots,y_{w},z_0,\ldots,z_{w}\}$ where
\[
    x_i=
    \left\{
    \begin{array}{ll}
        b^iab^{-i}a^{-1} & \text{if } 1\leq i \leq w-1 \\
        b^w & \text{if } i=w 
    \end{array}
    \right. ,
\]
\[
    y_i=
    \left\{
    \begin{array}{ll}
        ab^iab^{-i}a^{-2} & \text{if } 1\leq i \leq w-1 \\
        ab^wa^{-1} & \text{if } i=w 
    \end{array}
    \right.
\]
and
\[
    z_i=
    \left\{
    \begin{array}{ll}
        a^2b^iab^{-i-r-1} & \text{if } 0 \leq i <w-r-1 \\ 
        a^2b^iab^{w-i-r-1} & \text{if } w-r-1 \leq i < w \\ 
        a^2b^wa^{-2} & \text{if } i=w
    \end{array}
    \right. .
\]

Using this basis,
\[
    \beta_K\alpha_K=z_0^{-1}
\]
and
\[
    \beta_K=z_{0}^{-1} y_{1}^{-1} x_{2}^{-1} \prod_{i=1}^{c-1} (z_{\delta(i)}^{-1} y_{\delta(i)+1}^{-1} x_{\delta(i)+2}^{-1}) z_{w-2}^{-1} y_{w-1}^{-1} y_{w}
\]
where
\[
    \delta(i)=w-i(r-2) .
\]

Since $w=c(r-2)-2$,
\[
    \delta(i)-1=w-i(r-2)-1=(r-2)(c-i)-3
\]
so if $\delta(i)=1$ then $(r-2)(c-i)=3$.
However, since $r\geq 7$, $r-2$ does not divide 3
so $\delta(i)$ is never 1
so $y_1$ only appears once in $\beta_K$.

Thus, the set $\{\beta_K\alpha_K,\beta_K,x_1,\ldots,x_{w},y_2,\ldots,y_{w},z_1,\ldots,z_{w}\}$ is a free basis of $K[X,X]$.
Since $\{\beta_K\alpha_K,\beta_K\}$ is a free basis of $K$,
$K$ is a free factor of $K[X,X]$.
\end{proof}

\section{Chart of Results} \label{seccharts}

\begin{table}[t]

\centering

$P(-3,Q,R)$ Pretzel Knots

\includegraphics[scale=1.0]{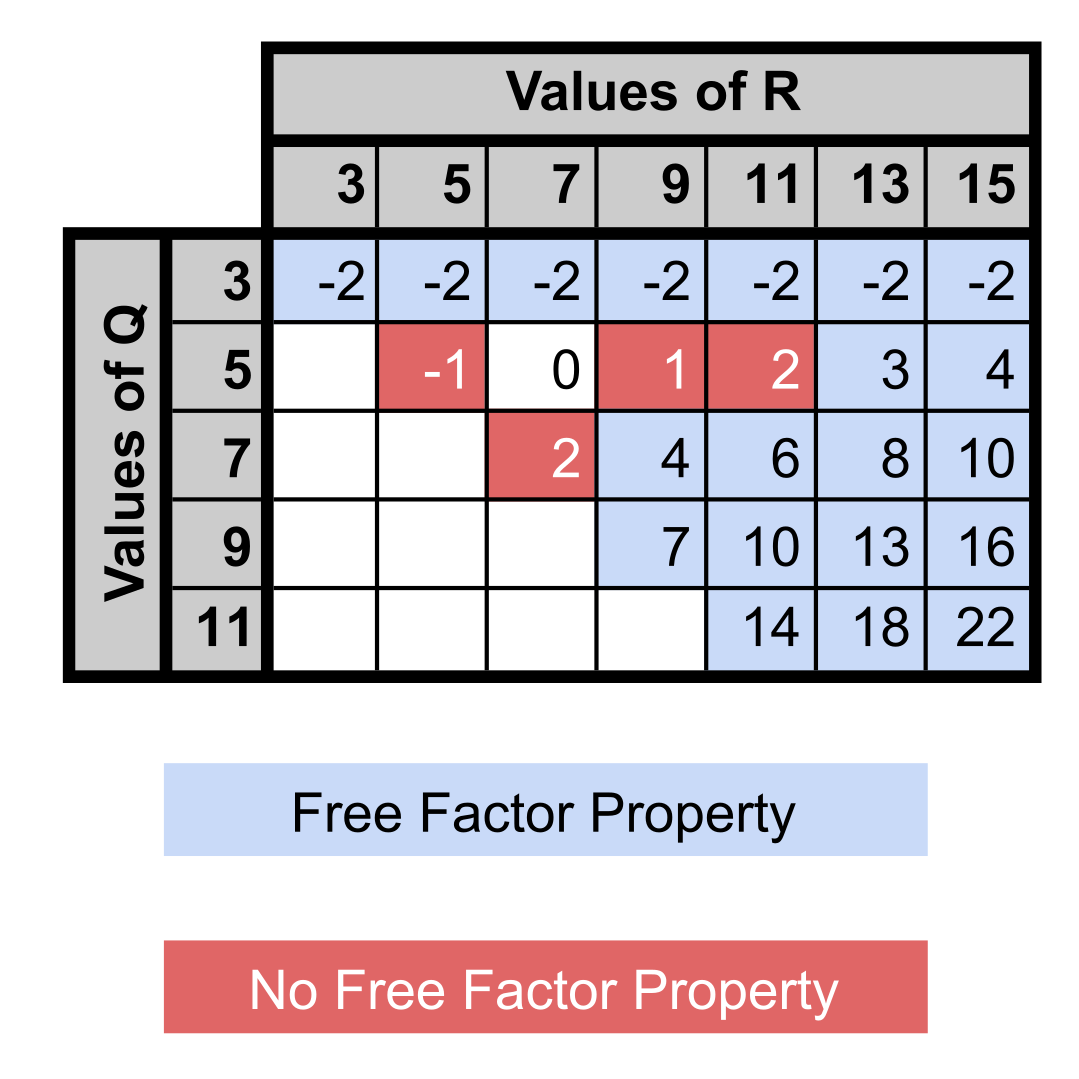}

$P(-5,Q,R)$ Pretzel Knots

\includegraphics[scale=1.0]{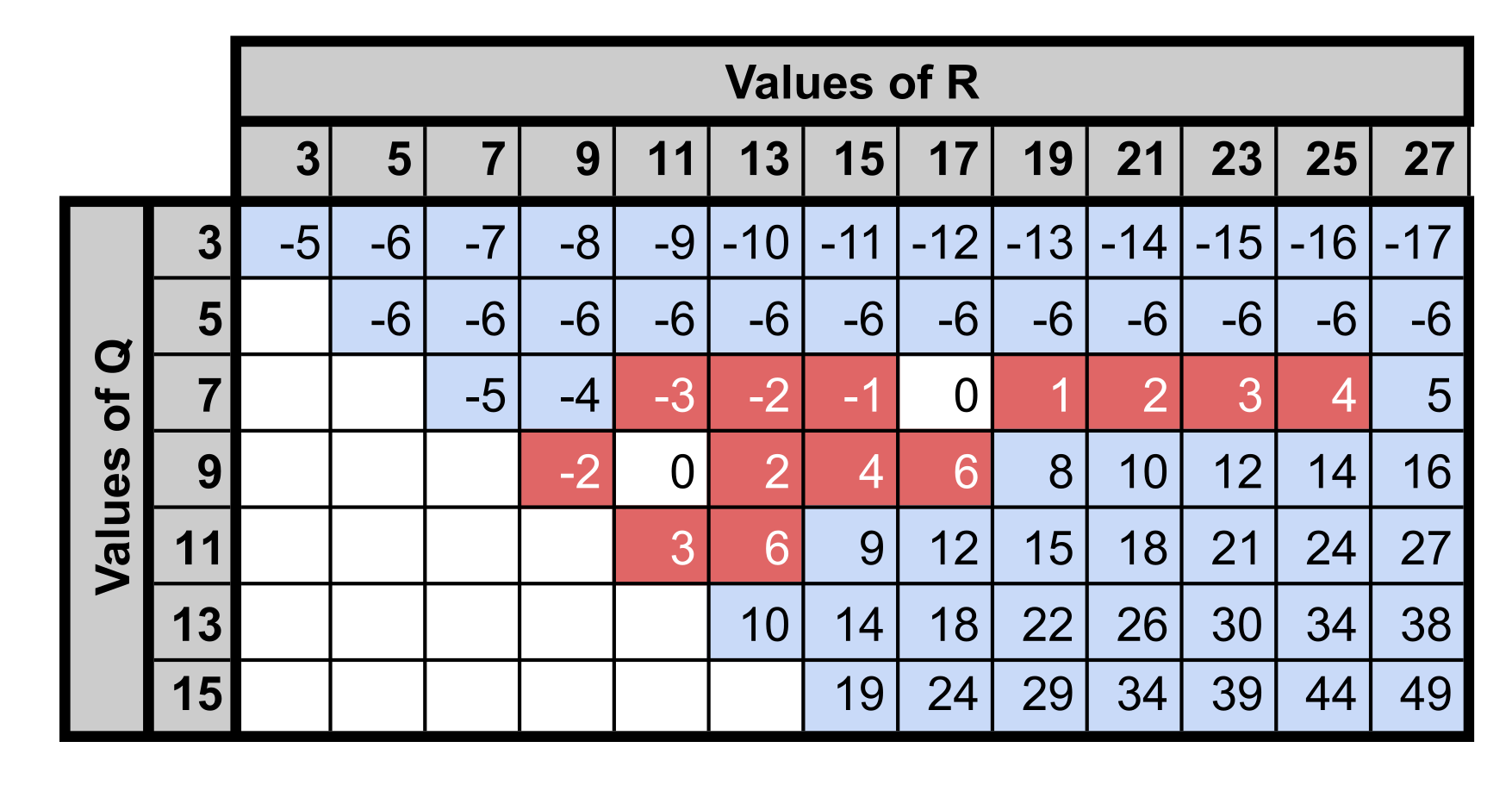}

\caption{The results for some $P(-3,Q,R)$ (top) and $P(-5,Q,R)$ (bottom) pretzel knots where $Q=2q+1$ and $R=2r+1$.
The integers in each cell is the value of $N$.
Each cell is shaded light blue if the knot's standard Seifert surface satisfies the free factor property,
and shaded dark red if the knot's standard Seifert surface does not satisfy the free factor property.}
\label{results_tbl}
\end{table}

Table \ref{results_tbl} summarize the results we've found for the pretzel knots $P(-3,Q,R)$ and $P(-5,Q,R)$ where $Q=2q+1$ and $R=2r+1$.
The shading of the cells, in each chart, indicate whether or not the knot's standard Seifert surface $S$ satisfies the free factor property.
Cells of knots with trivial Alexander polynomial are not shaded.
The integers in each cell is the value of $N=\det(S_+)=\det(S_-)$,
which is also the leading coefficient of the Alexander polynomial.
If a pretzel knot's cell is shaded light blue and $N$ is a prime power, then the knot's group has residually torsion-free nilpotent commutator subgroup.
If in addition, $N<0$, then the knot's group is bi-orderable.

\printbibliography

\end{document}